\newtheorem{conjecture}[theorem]{Conjecture}
\newtheorem{remark}[theorem]{Remark}
\newtheorem{fact}[theorem]{Fact}
\newcommand{\ZZ}{\mathbb{Z}}
\newcommand{\NN}{\mathbb{N}}
\newcommand{\RR}{\mathbb{R}}
\newcommand\eul{\mathsf{e}}
\newcommand\bbmB{\mathbbm{B}}
\newcommand\bbmL{\mathbbm{L}}
\newcommand\bbmU{\mathbbm{U}}
\newcommand\frakF{\uparrow}
\newcommand\frakI{\downarrow}
\newcommand\calF{\mathcal{F}}
\newcommand\calC{\mathcal{C}}
\newcommand\calP{\mathcal{P}}
\newcommand\calI{\mathcal{I}}
\newcommand\calT{\mathcal{T}}
\newcommand\calA{\mathcal{A}}
\newcommand\calG{\mathcal{G}}
\newcommand\nci{\mathtt{NCI}}
\newcommand\ncu{\mathtt{NCU}}
\newcommand\nti{\mathtt{NTI}}
\newcommand\ncpd{\mathtt{NCPD}}
\newcommand\ntz{\mathtt{NTZ}}
\newcommand\ntcz{\mathtt{NTCZ}}
\newcommand\pd{\mathtt{PD}}
\newcommand\ap{\mathtt{AP}}
\newcommand\pol{\mathtt{pol}}
\newcommand\mult{\mathtt{mult}}
\newcommand\lift{\mathtt{lift}}
\newcommand\onehat{\hat{1}}
\newcommand\zerohat{\hat{0}}
\newcommand\muhat{\hat{\mu}}
\newcommand\mucheck{\check{\mu}}
\newcommand{\dual}{\mathrm{dual}}
\def\smallbullet{\mbox{\larger[-5]$\bullet$}}
\def\cupdot {\stackrel{\smallbullet}{\cup}}
\def\minusdot {\stackrel{\smallbullet}{\setminus}}
\newcommand\join{\mathrel{\text{\raisebox{0.25ex}{\scalebox{0.8}{$\vee$}}}}}
\newcommand\meet{\mathrel{\text{\raisebox{0.25ex}{\scalebox{0.8}{$\wedge$}}}}}
\newcommand\defeq{\stackrel{\mathrm{def}}{=}}
\newcommand{\squig}{{\scriptstyle\sim\mkern-3.9mu}}
\newcommand{\rsquigend}{{\scriptstyle\rule{.1ex}{0ex}\rhd}}
\newcounter{sqindex}
\newcommand\squigs[1]{%
  \setcounter{sqindex}{0}%
  \whiledo {\value{sqindex}< #1}{\addtocounter{sqindex}{1}\squig}%
}
\newcommand\rewr[2]{%
  \mathbin{\stackon[2pt]{\squigs{#2}\rsquigend}{\scriptscriptstyle\text{#1\,}}}%
}
\begin{document}
  \title{The Non-Cancelling Intersections Conjecture}
\author{Antoine Amarilli\\{\small LTCI, Télécom Paris,}\\[-.4em]{\small Institut Polytechnique de
Paris}\\{\small
\texttt{a3nm@a3nm.net}} \and Mikaël Monet\\{\small
Université de Lille, CNRS, Inria, UMR 9189 - CRIStAL, F-59000 Lille, France}\\{\small \texttt{mikael.monet@inria.fr}} \and
Dan Suciu \\{\small University of Washington}\\{\small
\texttt{suciu@cs.washington.edu}}}
\date{}
\maketitle

\begin{abstract}
  In this note, we present a conjecture on intersections of set
  families, and a rephrasing of the conjecture in terms of principal downsets
  of Boolean lattices. 
  The conjecture informally states that, whenever we can
  express the measure of a union of sets in terms of the
  measure of some of their intersections using the inclusion-exclusion formula,
  then we can express the union as a set from these same intersections via the set operations
  of disjoint union and subset complement. We also present a partial result
  towards establishing the conjecture.
\end{abstract}

\paragraph*{Acknowledgements.}
We thank Louis Jachiet for helpful discussions on the problem, and for having found
the counterexample to a stronger version of the conjecture that is mentioned at the end
of Section~\ref{sec:final}.
This work was done in part while the authors were visiting the Simons Institute
for the Theory of Computing.

\section{Introduction}
\label{sec:intro}
We present in this note a conjecture on intersection lattices of set
families, which can be equivalently stated on the Boolean lattice.
The original motivation for the conjecture comes from a problem in database
theory about the existence of certain circuit representations for
probabilistic query evaluation (see~\cite{monet2020solving}), but in this note
we present the conjecture as a purely abstract claim without any database
prerequisites. A positive answer to this abstract conjecture implies a positive answer to
the database theory conjecture.

The conjecture can be understood in terms of the inclusion-exclusion formula.
Consider a family of sets $S_1, \ldots, S_n$. The inclusion-exclusion formula
can be used to express a quantity on the union $S_1 \cup \cdots \cup S_n$ (e.g.,
the cardinality, or the value of some measure) as a function of
the intersections of the $S_i$. 
In general, some of these intersections may in fact be identical: we can in
fact define the \emph{intersection lattice} of the set family to represent the
possible intersections that can be obtained. Further, we can
have \emph{cancellations}, i.e., some of the possible intersections may end up
having a coefficient of zero in the inclusion-exclusion formula, due to
cancellations. Thus, in the general case, inclusion-exclusion allows us to
express the measure of $S_1 \cup \cdots \cup S_n$ in terms of the measure of the
intersections $I_1, \ldots, I_m$ that have a non-zero coefficient.

Our conjecture asks whether, in this case, one can obtain the \emph{set} $S_1 \cup
\cdots \cup S_n$ from the intersections $I_1, \ldots, I_m$, using the set
operations of disjoint union and subset-complement. If this is true, then it
implies in particular the inclusion-exclusion formulation, provided that our
measure is additive in the sense that the measure of $S \cupdot S'$ (where~$\cupdot$ denotes disjoint union and
~$\minusdot$ subset complement, as per Definition~\ref{def:dots}) is the sum
of the measure of $S$ and of~$S'$. Obtaining
such a set expression can sometimes be done simply by re-ordering the
inclusion-expression formula: for instance, if we write $|X\cup Y| = |X| + |Y| -
|X \cap Y|$, we can reorder to $|X\cup Y| = |X| -
|X \cap Y| + |Y|$, and then express $X \cup Y = (X \minusdot (X \cap Y)) \cupdot
Y$. However, in general, it is unclear whether such an expression can be
obtained. 
Before continuing, let us give a first toy example.

\begin{example}

Consider the set family $\calF = \{S_1,S_2,S_3\}$ with  $S_1=\{a,b,d\}$, $S_2=\{a,b,c,e\}$,
$S_3=\{a,c,f\}$. Then:
\begin{align*}
  |S_1 \cup S_2 \cup S_3| &= |S_1| + |S_2| + |S_3| \\
                          &\phantom{=} - (|S_1 \cap S_2| + |S_1 \cap S_3| + |S_2 \cap S_3|) \\
                          &\phantom{=} + |S_1 \cap S_2 \cap S_3|.
\end{align*}
Since we have~$S_1 \cap S_3 = S_1 \cap S_2 \cap S_3$, we obtain
\begin{align*}
  |S_1 \cup S_2 \cup S_3| &= |\{a,b,d\}| + |\{a,b,c,e\}| + |\{a,c,f\}|- |\{a,b\}| - |\{a,c\}|.
\end{align*}
The non-cancelling intersections are the ones that remain, i.e., $\{a,b,d\}$,
$\{a,b,c,e\}$, $\{a,c,f\}$, $\{a,b\}$, and $\{a,c\}$, while $\{a\}$ ($=S_1 \cap
S_3 = S_1 \cap S_2 \cap S_3$) is a cancelling term.

Then, we can express~$S_1 \cup S_2 \cup S_3 = \{a,b,c,d,e,f\}$ with $
\big[\big((\{a,b,d\} \minusdot \{a,b\}) \cupdot \{a,b,c,e\}\big) \minusdot
\{a,c\}\big] \cupdot \{a,c,f\}$: the reader can easily check that each
$\cupdot$ (resp., each $\minusdot$) is a valid disjoint union (resp., subset
complement), and that we have only used the non-cancelling intersections. Note
that this is not the only valid expression, for instance we can also obtain the
union with the expression $[\{a,b,d\} \minusdot \{a,b\}] \cup [\{a,b,c,e\}
\minusdot \{a,c\}] \cup \{a,c,f\}$.

\end{example}

Our results imply that it is always possible when there are no cancellations
(Fact~\ref{fact:annoyingfact}), or exactly one cancellation
(in the equivalent formulation on Boolean lattices, Theorem~\ref{thm:partial}), but in general it seem challenging to do so while
avoiding those intersections which cancel out.
The goal of this note is to present the current status of our efforts in
attacking the conjecture, in particular showcasing some equivalent formulations,
presenting examples, and establishing a very partial result. After presenting
preliminaries in Section~\ref{sec:prelims}, we formally state the conjecture and
give examples in Section~\ref{sec:conj}. We show in Section~\ref{sec:tight} that
it suffices to study the conjecture on specific intersection lattices, which we
call \emph{tight}, where informally the set family ensures that every possible
intersection contains exactly one element that is ``specific'' to this
intersection (in the sense that it is present precisely in this intersection and
in larger intersections). We use this in Section~\ref{sec:alt} to give an
alternative formulation: instead of working with intersection lattices, we can
work with downsets on the Boolean lattice. We show that this is equivalent to
the original conjecture. The hope is that the setting of the Boolean lattice can
be more convenient to work with, as its structure is more restricted, and we can
define quantities such as the Euler characteristic that seem useful in
understanding the structure of the problem.

We next present in Section~\ref{sec:partial} a partial result that we can show
in the context of the Boolean lattice. In the rephrased problem, we must express
a downset of that lattice using disjoint union and subset complement on
principal downsets spanned by elements which do not ``cancel out'' in the sense
of having non-zero Möbius value. Our result establishes that the rephrased conjecture is true
when there is one single node of the downset that has such value; this non-trivially 
extends the fact that the result is true when no such nodes exist
(Lemma~\ref{lem:allreach}), but falls short of the goal as 
in general many such zeroes can occur.

We conclude in Section~\ref{sec:final} with further questions and
directions on the conjecture. We point out that the conjecture can be
strengthened, in two different ways, to restrict the shape of the disjoint-union
and subset-complement expressions: we also do not know the status of these
stronger conjectures. We briefly report on an unsuccessful experimental search
for counterexamples. We also hint at an incomplete proof of another partial
result where we can avoid  more than one zero, provided the targeted downset
satisfies a certain decomposability condition.

\section{Preliminaries}
\label{sec:prelims}
For a set~$S$ we write~$2^S$ its powerset.
In this work, by \emph{family of sets}, or simply \emph{set family}, we always mean
a finite set of (not necessarily finite) sets. We generally use
cursive letters to denote set families, uppercase letters to denote sets, and lowercase letters for elements of sets.
For a set family~$\calF$ (resp., non-empty set family~$\calF$) we write~$\bigcup \calF$ (resp., $\bigcap \calF$) for~$\bigcup_{X\in \calF} X$ (resp., $\bigcap_{X\in \calF} X$).
For a set~$X$ and two functions~$f,g\colon X \to \ZZ$, we
write~$f+g$ (resp., $f-g$) the function defined by~$(f+g)(x) \defeq f(x)+g(x)$
(resp., $(f-g)(x) \defeq f(x)-g(x)$) for all~$x\in X$.

\paragraph*{Posets.}
Recall that a \emph{poset}~$P=(A,\leq)$ is a pair consisting of a set~$A$ and a
binary \emph{partial order} relation~$\leq$ over~$A$ that is reflexive,
antisymmetric and transitive. 
By slight abuse of notation, we may write~$x\in
P$ to mean~$x\in A$ (and~$U\subseteq P$ to mean~$U\subseteq A$). 

A \emph{greatest} (resp., \emph{least}) element of~$P$ is an
element~$x\in P$ such that for all~$x'\in P$ we have $x' \leq x$
(resp., $x \leq x'$). When such an element exists it is unique, and
we then denote it by~$\onehat$ (resp.,~$\zerohat$). We may write~$\zerohat_P, \onehat_P$ to avoid confusion when multiple posets are involved.

For~$G \subseteq A$, we define the \emph{upset of~$P$ generated by~$G$} (also called an \emph{order
filter}),
denoted~$\frakF_{P}(G)$, by $\frakF_P(G) \defeq \{x\in P \mid \exists y\in G
\text{~s.t.~} y \leq x\}$.  We
also define the \emph{downset generated by~$G$} (also called an \emph{order
ideal}), denoted~$\frakI_P(G)$, by
$\frakI_P(G) \defeq \{x \in P \mid \exists y\in G \text{~s.t.~} x \leq y\}$.
Note that~$\frakF_P(\emptyset) = \frakI_P(\emptyset) = \emptyset$.
An \emph{upset} (resp., a
\emph{downset}) of~$P$ is a subset of~$P$ of the form~$\frakF_P(G)$
(resp.,~$\frakI_P(G)$) for~$G\subseteq A$. When~$|G|=1$ we call~$\frakF_P(G)$
(resp., $\frakI_P(G)$) a \emph{principal upset} (resp., \emph{principal
downset}), and by slight abuse of notation we sometimes write~$\frakF_P(x)$
for~$x\in P$ to mean~$\frakF_P(\{x\})$ (resp.,~$\frakI_P(x)$ to
mean~$\frakI_P(\{x\})$).

Two posets~$P_1 = (A_1, \leq_1)$ and~$P_2 = (A_2, \leq_2)$ are
\emph{isomorphic} if there exists a bijection~$h\colon A_1\to A_2$ such that for
all~$x,y\in P_1$ we have~$x \leq_1 y$ iff $h(x) \leq_2 h(y)$. When this is the case we write~$P_1\simeq P_2$.

\paragraph*{Lattices.}
A poset~$P=(A,\leq)$ is a \emph{meet semilattice} if, for every~$x,y\in P$,
there exists~$z\in P$ such that: (1) $z \leq x$ and~$z\leq y$; and (2) for
every~$z'\in P$ such that $z' \leq x$ and~$z'\leq y$, we have~$z' \leq z$. This
element~$z$ is then unique and is called the \emph{meet of~$x$ and~$y$}, and
written~$x \meet y$. We similarly define \emph{join semilattices} in the
expected way, with the join denoted~$x\join y$, and then \emph{lattices} as
posets that are both a meet semilattice and a join semilattice.

We recall that a finite join (resp., meet) semilattice that has a least
(resp., greatest) element is a lattice (see, e.g., \cite[Proposition
3.3.1]{stanley2011enumerative}).
We also recall the property that in a meet semilattice~$L$, the intersection of two
principal downsets~$\frakI_L(x)$ and~$\frakI_L(y)$ generated by~$x,y\in L$ is the
principal downset generated by~$x\meet y$; the dual property holds about principal upsets in
join semilattices with the join operation.

For a set~$S$ we write~$\bbmB_S$ the Boolean lattice over~$S$, by which we mean
the lattice~\mbox{$(2^S, \subseteq)$}, where join corresponds to set union and meet to
set intersection.

\begin{definition}
  \label{def:nontrivial}
  A set family~$\calF$ is called \emph{trivial} if
  it is empty or if
  there is~$X\in \calF$ such that~$\bigcup \calF = X$, and \emph{non-trivial} otherwise.
\end{definition}

We define the standard notion of \emph{intersection lattice} of a non-trivial set family:

\begin{definition}[\null\cite{stanley2011enumerative}, Section~3.7.2]
  \label{def:intersection-lattice}
  Let~$\calF$ be a non-trivial set family. For~$\calT \subseteq \calF$, $\calT \neq
  \emptyset$, we define:
  $S_{\calT} \defeq \bigcap \calT$. We also define $S_\emptyset \defeq
  \bigcup \calF$ with a slight abuse of notation as $S_\emptyset$ depends on
  the underlying set family~$\calF$.

  Define the poset~$\bbmL_\calF = (\calA,\subseteq)$, where $\calA \defeq
  \{S_{\calT} \mid \calT \subseteq \calF\}$.
  Then one can check
  that~$\bbmL_\calF$ is a meet semilattice whose meet operation is set intersection.
  We
  call~$\bbmL_\calF$ \emph{the intersection lattice} of~$\calF$. We say that a
  lattice~$L$ is \emph{an intersection lattice} if it is of the form $\bbmL_\calF$ for some non-trivial set family~$\calF$.
  (Note that we might have~$\bbmL_{\calF_1} = \bbmL_{\calF_2}$ for~$\calF_1 \neq
  \calF_2$.)
\end{definition}

\begin{remark}
  \label{rmk:lattices-are-intersection}
  Every finite lattice $L = (A,\leq)$ is isomorphic to some intersection
  lattice. Indeed, define $\calF \defeq \{\downarrow_L(\{U\}) \mid U \in A, U \neq \onehat_L\}$, i.e., the set of principal
  downsets of $L$ except $\downarrow_L(\{\onehat_L\})$ (as otherwise $\calF$ is
  trivial). Then one can easily check that $L \simeq \bbmL_\calF$, since
  $\downarrow_L(\{U\}) \cap \downarrow_L(\{V\}) = \downarrow_L(U \wedge_L V)$
  for $U,V\in A$.
\end{remark}

Such a lattice $L=\bbmL_\calF$ has a~$\zerohat$, which is~$S_{\calF} = \bigcap \calF$, and
a~$\onehat$, which is~$S_\emptyset = \bigcup \calF$. 
Notice that~$\onehat \neq \zerohat$ because $\calF$ is non-trivial.
Since~$L$ is a finite meet-semilattice that
has a~$\onehat$, we know that~$L$ is also a join-semilattice by the property mentioned
above, but for the purpose of this note we will not need to know what the join operation corresponds to.
Note that by definition $S_{\calT_1 \cup \calT_2} = S_{\calT_1} \cap S_{\calT_2}$ for
any~$\calT_1,\calT_2 \subseteq \calF$. Observe that we may have $S_{\calT_1} = S_{\calT_2}$ with $\calT_1 \neq
\calT_2$, in which case $S_{\calT_1} = S_{\calT_2} = S_{\calT_1 \cup \calT_2}$.

  \begin{example}
    \label{expl:intersection-lattices}
    Figure~\ref{fig:int-latt} shows the Hasse diagrams of the various intersection
    lattices defined next. (Ignore for now the integer annotations next to
    the nodes and the fact that some nodes are colored.)
    \begin{description}
      \item[\textbf{$L_1$.}]
  The intersection lattice~$L_1$ of (the non-trivial set family containing) $S_1 = \{n \in \NN \mid n \equiv 0 \pmod
  2\}$, $S_2 = \{n \in \NN \mid n \equiv 0 \pmod 3\}$, and $S_3 = \{n\in \NN
  \mid n \equiv 0 \pmod {12}\}$ is shown in Figure~\ref{fig:L1}. 

      \item[\textbf{$L_2$.}]
  The intersection lattice~$L_2$ of~$S_1 = \{a,d\}$, $S_2 = \{b,d\}$, and $S_3
  = \{c,d\}$ is shown in Figure~\ref{fig:L2}. 
        This example illustrates that an intersection lattice is not necessarily
  distributive.

      \item[\textbf{$L_3$.}]
  The intersection lattice~$L_3$ of~$S_1 = \{a,b\}$, $S_2 = \{a,c\}$, $S_3 =
  \{b,c\}$, and $S_4 = \{d\}$ is shown in Figure~\ref{fig:L3}. 

      \item[\textbf{$L_4$.}]
  The intersection lattice~$L_4$ of~$S_1 = \{a,c,d,g\}$, $S_2 = \{a,b,d,f\}$,
  $S_3 = \{a,b,c,e\}$, and $S_4 = \{a,h\}$ is shown in Figure~\ref{fig:L4}.
  Observe that~$L_3$ and~$L_4$ are isomorphic.
    \end{description}
    Finally, another intersection lattice~$L_5$ is depicted in Figure~\ref{fig:complex} (ignore for now the tree).
  \end{example}

\begin{figure}
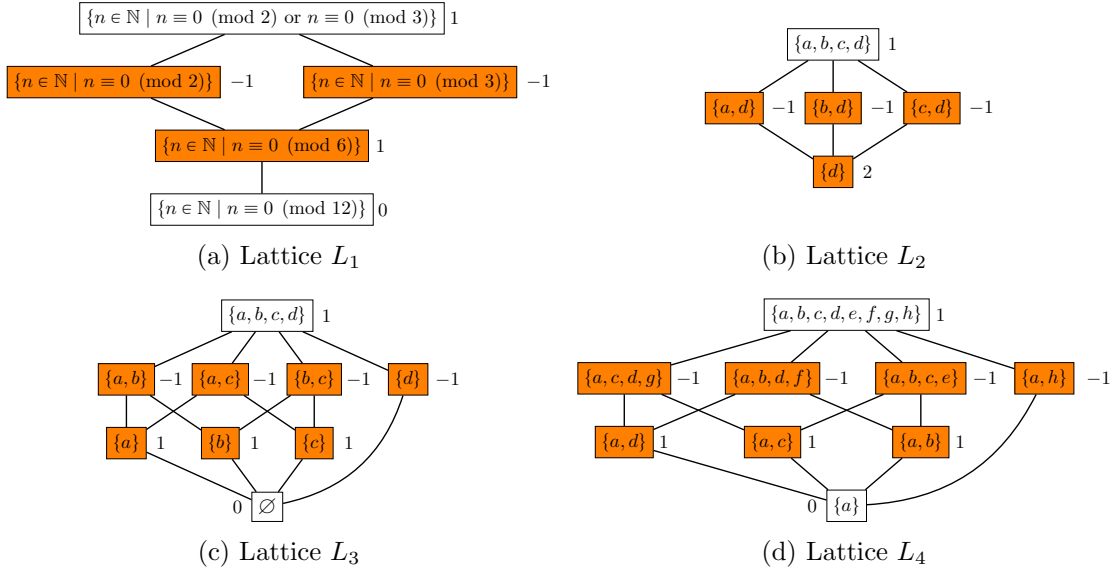

  \begin{minipage}{\linewidth}

    \begin{minipage}[t]{.5\linewidth}  
    \noindent  
    \begin{subfigure}[b]{\linewidth}
      \centering
      \scalebox{0.65}{
	\begin{tikzpicture}
	\tikzset{nodestyle/.style={draw,rectangle}}
	\input{figures/L1}
    \end{tikzpicture}}
        \caption{Lattice $L_1$}
        \label{fig:L1}
      \end{subfigure}
      \end{minipage}
      \hfill
      \noindent
    \begin{minipage}[t]{.5\linewidth}  
    \noindent  
    \begin{subfigure}[b]{\linewidth}
      \centering
      \raisebox{.5cm}{
      \scalebox{0.65}{
	\begin{tikzpicture}
	\tikzset{nodestyle/.style={draw,rectangle}}
	\input{figures/L2}
  \end{tikzpicture}}}
        \caption{Lattice $L_2$}
        \label{fig:L2}
      \end{subfigure}
      \end{minipage}

    \end{minipage}\vspace{.3cm}

  \begin{minipage}{\linewidth}
    \begin{minipage}[t]{.5\linewidth}  
    \noindent  
    \begin{subfigure}[b]{\linewidth}
      \centering
      \scalebox{0.65}{
	\begin{tikzpicture}
	\tikzset{nodestyle/.style={draw,rectangle}}
	\input{figures/L3}
    \end{tikzpicture}}
        \caption{Lattice $L_3$}
        \label{fig:L3}
      \end{subfigure}
      \end{minipage}
      \hfill
\noindent
    \begin{minipage}[t]{.5\linewidth}  
    \begin{subfigure}[b]{\linewidth}
      \centering
      \scalebox{0.65}{
	\begin{tikzpicture}
	\tikzset{nodestyle/.style={draw,rectangle}}
	\input{figures/L4}
    \end{tikzpicture}}
        \caption{Lattice $L_4$}
        \label{fig:L4}
      \end{subfigure}
      \end{minipage}

  \end{minipage}
  \caption{Hasse diagrams of the intersection lattices from
    Example~\ref{expl:intersection-lattices}. The integer value besides each
  node~$n$ is~$\mu_L(n,\onehat)$ and is computed top-down following Definition~\ref{def:mobius}.
  The orange nodes are the non-cancelling non-trivial intersections.}
  \label{fig:int-latt}
\end{figure}

\begin{figure}
  \begin{minipage}{\linewidth}
  \begin{tikzpicture}[xscale=2.3,yscale=1.5]
	\tikzset{nodestyle/.style={draw,rectangle}}
\input{}
\end{tikzpicture}
\end{minipage}\vspace{1cm}
  \begin{minipage}{\linewidth}
        \null\hfill
        \scalebox{0.8}{
  	\begin{tikzpicture}[level distance=1.2cm,
    level all/.style={sibling distance=3cm}]
          \usetikzlibrary{trees}
  	\input{}
      \end{tikzpicture}}
    \hfill\null
  \end{minipage}

  \caption{A tight intersection lattice of seven sets, and a witnessing tree
  showing that it does not violate the conjecture. For brevity we omit curly
  braces and commas when writing sets, i.e., $ab$ stands for $\{a,
  b\}$. }
  \label{fig:complex}
\end{figure}

In an intersection lattice~$L = \bbmL_\calF$, for
every~$x\in \onehat$, the set of elements of~$L$ in which~$x$ occurs
is the principal upset of~$L$ generated by
$\min_L(x) \defeq S_{\{X\in \calF \mid x\in X\}}$.    Note that~$\min_L(x) \neq \onehat$
because $\calF$ is non-trivial. In particular we have the following:

\begin{fact}
  \label{fact:union-min}
  Let~$L$ be an intersection lattice and~$U\in L$. Then we have~$U
  = \bigcup_{\substack{U'\in L \\ U' \subseteq U}} \{x\in \onehat \mid U' = \min_L(x)\}$, and this union is disjoint.
\end{fact}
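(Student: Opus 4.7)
The plan is to unwind the definitions of $\min_L$ and of $\bbmL_\calF$, and verify the two inclusions directly together with disjointness.

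First, I would note that disjointness of the union is immediate: for each $x \in \onehat$, $\min_L(x)$ denotes a specific element of $L$, so $x$ belongs to the set $\{y \in \onehat \mid U' = \min_L(y)\}$ for exactly one choice of $U' \in L$, namely $U' = \min_L(x)$. Hence the sets indexed by distinct $U'$ are pairwise disjoint.

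For the inclusion $\supseteq$, I would show that for every $x \in \onehat$ one has $x \in \min_L(x)$. Indeed, since $x \in \onehat = \bigcup \calF$, the family $\{X \in \calF \mid x \in X\}$ is non-empty, and $x$ belongs to each of its members, so $x \in \bigcap \{X \in \calF \mid x \in X\} = \min_L(x)$. If additionally $\min_L(x) = U'$ for some $U' \subseteq U$, it follows that $x \in U' \subseteq U$, as required.

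For the inclusion $\subseteq$, I would take $x \in U$. Since $U \subseteq \onehat$, we have $x \in \onehat$. By the definition of $\bbmL_\calF$, we can write $U = S_\calT = \bigcap \calT$ for some $\calT \subseteq \calF$; then $x \in X$ for every $X \in \calT$, so $\calT \subseteq \{X \in \calF \mid x \in X\}$. Taking intersections of these families reverses the inclusion, giving $\min_L(x) = \bigcap \{X \in \calF \mid x \in X\} \subseteq \bigcap \calT = U$. Hence setting $U' \defeq \min_L(x)$ we get $U' \in L$ with $U' \subseteq U$ and $x \in \{y \in \onehat \mid U' = \min_L(y)\}$, which places $x$ in the right-hand side.

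I do not expect any real obstacle here: the statement is essentially a bookkeeping identity saying that every element of $U$ has a unique ``home'' $U' \in L$ below $U$ determined by the minimal intersection containing it. The only small point to remember is that $\min_L(x)$ is always defined and lies in $L$ because $x \in \bigcup \calF$ ensures the defining family of intersected sets is non-empty, and the observation that $\min_L(x) \neq \onehat$ (noted just above the Fact) is not needed for the statement itself.
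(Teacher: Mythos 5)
Your proof is correct and takes essentially the route the paper intends: the Fact is stated there as an immediate consequence of the preceding observation that the elements of $L$ containing a given $x\in\onehat$ form the principal upset generated by $\min_L(x)$, which is exactly the biconditional $x\in U \Leftrightarrow \min_L(x)\subseteq U$ that your two inclusions (plus the uniqueness of $\min_L(x)$ for disjointness) establish. One small caveat in your $\subseteq$ direction: writing $U=\bigcap\calT$ does not cover the case $U=\onehat$, since by the paper's convention $S_\emptyset\defeq\bigcup\calF$ is not an intersection (and, by non-triviality, $\onehat$ equals no $S_\calT$ with $\calT\neq\emptyset$); that case is handled trivially because every element of $L$ is a subset of $\onehat$, so $\min_L(x)\subseteq U$ holds automatically.
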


\paragraph*{Möbius inversion formula and inclusion-exclusion.}
We now introduce the \emph{Möbius function} on finite posets.

\begin{definition}
  \label{def:mobius}
  Let~$P = (A,\leq)$ be a finite poset. The \emph{Möbius function}
  of~$P$ is the function~$\mu_P$ with value in~$\ZZ$ and domain the set of pairs~$(x,y)\in A \times A$ such that~$x\leq y$, defined,
  for each $y \in A$, by induction over the elements $x$ such that~$x\leq y$:
  \begin{itemize}
    \item $\mu_P(y,y) \defeq 1$;
    \item $\mu_P(x,y) \defeq - \sum_{x < z \leq y} \mu_P(z,y)$.\footnote{The Möbius
      function is traditionally defined with~$\mu_P(x,y) \defeq - \sum_{x \leq z <
    y} \mu_P(x,z)$, but the two definitions are
  equivalent~\cite[Proposition 3]{rota1964foundations}. We
use this version as it simplifies our presentation.}
  \end{itemize}
  \end{definition}

  \begin{proposition}[Möbius inversion formula; see, e.g., Prop.\ 3.7.1 of~\cite{stanley2011enumerative}]
    \label{prp:mob-inversion}
   Let~$P=(A,\leq)$ be a finite poset,
   and let~$f,g\colon A \to \RR$. Then we have:
    \[g(x) =
    \sum_{y\leq x} f(y) \text{~for all~} x\in P \text{~~~~if and only if~~~~} f(x) = \sum_{y\leq x}
    \mu_P(y,x) g(y) \text{~for all~} x\in P.
    \]
  \end{proposition}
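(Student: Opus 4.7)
The plan is to prove both implications by a standard double-summation argument, pivoting on a single identity for the Möbius function. First, I would establish that for all $y \leq x$ in $P$,
\[
\sum_{y \leq z \leq x} \mu_P(z, x) \;=\; [y = x],
\]
where $[\cdot]$ denotes the Iverson bracket. This is immediate from Definition~\ref{def:mobius}: when $y = x$ the sum is $\mu_P(x,x) = 1$, and when $y < x$ the sum rearranges to $\mu_P(y,x) + \sum_{y < z \leq x} \mu_P(z, x) = 0$ by the recursive clause.

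For the forward direction~($\Rightarrow$), I would substitute the hypothesis $g(y) = \sum_{z \leq y} f(z)$ into the right-hand side $\sum_{y \leq x} \mu_P(y,x)\, g(y)$, swap the order of summation, and apply the key identity:
\begin{align*}
\sum_{y \leq x} \mu_P(y,x)\, g(y)
  &= \sum_{y \leq x} \mu_P(y,x) \sum_{z \leq y} f(z) \\
  &= \sum_{z \leq x} f(z) \sum_{z \leq y \leq x} \mu_P(y,x) \\
  &= \sum_{z \leq x} f(z)\, [z = x] \;=\; f(x).
\end{align*}

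The reverse direction~($\Leftarrow$) is symmetric: substituting $f(y) = \sum_{z \leq y} \mu_P(z,y)\, g(z)$ into $\sum_{y \leq x} f(y)$ and swapping sums reduces the problem to evaluating $\sum_{z \leq y \leq x} \mu_P(z, y)$. The main (and only nonroutine) obstacle is that this \emph{dual} identity $\sum_{y \leq z \leq x} \mu_P(y, z) = [y = x]$ is not manifest in the recursion we used. I would resolve it in one of two ways. Option~1: invoke the equivalent definition cited in the footnote, under which the dual identity falls out of the same one-line computation. Option~2 (cleaner and definition-agnostic): after fixing a linear extension of $\leq$, assemble $Z(y,x) \defeq [y \leq x]$ and $M(y,x) \defeq \mu_P(y,x)$ (with $M(y,x) \defeq 0$ when $y \not\leq x$) into square matrices indexed by $A$; both are upper-triangular with $1$s on the diagonal, hence invertible, and the key identity reads $ZM = I$. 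Since $Z$ is invertible, $M = Z^{-1}$ is also a right inverse, so $MZ = I$, which is exactly the dual identity. With the dual identity in hand, the substitute-and-swap computation yields $\sum_{y \leq x} f(y) = g(x)$, completing the proof.
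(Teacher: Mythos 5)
Your proof is correct and complete: the key identity $\sum_{y \leq z \leq x} \mu_P(z,x) = [y=x]$ follows directly from Definition~\ref{def:mobius}, and you rightly isolate the only nonroutine point (the dual identity needed for the reverse direction), which both of your options resolve soundly. The paper gives no proof of this proposition and simply cites Stanley, whose argument is precisely the incidence-algebra/matrix computation of your Option~2 ($ZM = I$ for triangular $Z$ forces $MZ = I$), so your approach coincides with the standard one being referenced.
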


  One important application of the Möbius inversion formula is in
  determining the \emph{measure} of a union of sets from the measure
  of some of their intersections.  For our purposes, given a set~$S$,
  what we call a \emph{measure on~$2^S$} is a
  function~$\xi\colon 2^S\to \RR$ which is nonnegative (i.e.,
  $\xi(U)\geq 0$ for all~$U\in 2^S$), which satisfies
  $\xi(\emptyset)=0$, and which is additive under countable disjoint
  unions, i.e., for any countable collection $\{E_k\}_{k=1}^\infty$ of
  pairwise disjoint subsets of~$S$, we have
  $\xi\left(\bigcup_{k=1}^\infty E_k\right) = \sum_{k=1}^\infty
  \xi(E_k)$. In particular, $\xi$ is completely determined by
  the images of the singleton sets $\{x\}$ for $x \in S$.

\begin{proposition}
  \label{prp:incl-excl-mob}
  Let $L$ be an intersection lattice,
  and let~$\xi$ be a measure
  on~$2^{\onehat}$. Then we have
  \begin{equation}\label{eq:incl-excl-mob}
  \xi(\onehat) = - \sum_{\substack{U\in L \\ U \neq \onehat}}  \mu_L(U,\onehat) \xi(U).
  \end{equation}
\end{proposition}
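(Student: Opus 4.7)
The plan is to apply the Möbius inversion formula (Proposition~\ref{prp:mob-inversion}) on the poset $L$, pairing $\xi$ (restricted to $L$) with a carefully chosen auxiliary function $f \colon L \to \RR$. The choice of $f$ is dictated by the decomposition given in Fact~\ref{fact:union-min}: for every $V \in L$, I would set
$$f(V) \defeq \xi\bigl(\{x \in \onehat \mid \min_L(x) = V\}\bigr).$$

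The first step is to verify the hypothesis of Möbius inversion, namely that $\xi(U) = \sum_{V \leq U} f(V)$ for every $U \in L$. Fix any set family $\calF$ with $L = \bbmL_\calF$. By Fact~\ref{fact:union-min}, $U$ equals the disjoint union, over the finitely many $V \in L$ with $V \subseteq U$, of the sets $\{x \in \onehat \mid \min_L(x) = V\}$ appearing inside the definition of $f(V)$. Finite additivity of $\xi$ (which follows from countable additivity together with $\xi(\emptyset) = 0$, by padding with empty sets) then yields the identity.

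Next, I would observe that $f(\onehat) = 0$. Indeed, by non-triviality of $\calF$, every $x \in \onehat = \bigcup \calF$ lies in some $X \in \calF$ with $X \subsetneq \onehat$, so $\min_L(x) \subseteq X \subsetneq \onehat$. Hence $\{x \in \onehat \mid \min_L(x) = \onehat\}$ is empty and $f(\onehat) = \xi(\emptyset) = 0$.

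Finally, Proposition~\ref{prp:mob-inversion} gives $f(U) = \sum_{V \leq U} \mu_L(V,U)\,\xi(V)$ for every $U \in L$. Specializing to $U = \onehat$, isolating the $V = \onehat$ term (which has coefficient $\mu_L(\onehat,\onehat) = 1$), and substituting $f(\onehat) = 0$ yields
$$0 = \xi(\onehat) + \sum_{\substack{V \in L \\ V \neq \onehat}} \mu_L(V,\onehat)\,\xi(V),$$
which rearranges to~\eqref{eq:incl-excl-mob}. The only subtle step is choosing $f$ so that Fact~\ref{fact:union-min} exactly supplies the summation hypothesis of Möbius inversion; once this alignment is in place, and the boundary value $f(\onehat) = 0$ is identified, the rest is routine.
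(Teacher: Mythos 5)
Your proposal is correct and follows essentially the same route as the paper's own proof: both define $f(V) = \xi(\{x \in \onehat \mid \min_L(x) = V\})$, use Fact~\ref{fact:union-min} and additivity to obtain $\xi(U) = \sum_{V \leq U} f(V)$, apply Proposition~\ref{prp:mob-inversion}, and conclude from $f(\onehat) = \xi(\emptyset) = 0$ (which holds by non-triviality of the underlying family). Your slightly more explicit justifications of finite additivity and of why $f(\onehat)=0$ are welcome but do not change the argument.
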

\begin{proof}
  This is a known result but, to be self-contained, we reproduce the proof from
  \cite{stanley2011enumerative} (end of Section 3.7, page~304). 
  Let us define two functions $f,g\colon L\to\RR$.
  For~$U \in L$, let~$g(U)
  \defeq \xi(U)$, and define~$f(U)$ to be the measure of all elements of~$U$
  which belong to no~$U' \subsetneq U$, i.e., $f(U)
  \defeq \xi(\{x \in \onehat \mid \min_L(x) = U \})$.
  Observe that~$g(U) = \sum_{\substack{U' \in L\\ U' \leq U}} f(U')$ for
  all~$U \in L$, by additivity of~$\xi$ and using Fact~\ref{fact:union-min}.
  Therefore by
  Proposition~\ref{prp:mob-inversion} we have that~$f(\onehat) = \sum_{U \in L}
  \mu_L(U, \onehat) g(U)$. But notice that~$g(\onehat) = \xi(\onehat)$ and
  that~$f(\onehat) = \xi(\emptyset)$ (because $L=\bbmL_\calF$ for some~$\calF$ non-trivial), so that~$f(\onehat) = 0$. Therefore 
  indeed~$\xi(\onehat) = - \sum_{\substack{U\in L \\ U \neq \onehat}} \mu_L(U,\onehat) \xi(U)$.
\end{proof}

We point out that the above proposition can fail if the underlying
set family~$\calF$ is trivial\footnote{Take for instance~$\calF =
\{ \{a,b\},\{a\}\}$.}: this is the reason why we always 
work with non-trivial families of sets.

\begin{example}
  \label{expl:mobius-of-intersection-lattices}
  For the lattices~$L_1,L_2,L_3,L_4,L_5$ from
  Example~\ref{expl:intersection-lattices}, the
  values~$\mu_{L_i}(U,\onehat)$ for all elements~$U\in L_i$ are
  shown next to the corresponding nodes in
  Figures~\ref{fig:int-latt} and~\ref{fig:complex}. 
\end{example}

This motivates the following definitions.

\begin{definition}
  \label{def:nti-nci}
  Let~$L$ be an intersection lattice. We call~$\onehat$ the \emph{trivial intersection}. 
We define the \emph{non-trivial intersections of~$L$}, denoted~$\nti(L)$, by
   \[\nti(L) \defeq \{U \in L \mid U \neq \onehat\},\]
  and we define the \emph{non-cancelling,
  non-trivial intersections} (or simply \emph{non-cancelling intersections})
  of~$L$, denoted~$\nci(L)$, by
   \[\nci(L) \defeq \{U \in L \mid U \neq \onehat \text{ and } \mu_L(U,\onehat) \neq 0\}.\]
\end{definition}

In other words, the non-cancelling intersections are those
intersections that do not cancel in the inclusion-exclusion
formula, i.e., those $U$ such that~$\xi(U)$ occur in
Equation~\eqref{eq:incl-excl-mob} with non-zero coefficient.

Last, we will also need the following application of the Möbius inversion
formula, which is another formulation of the inclusion-exclusion
principle.

\begin{proposition}
\label{prp:inv_bool}
Let~$S$ be a finite set, and let~$f,g\colon \bbmB_S \to \mathbb{R}$. Then we have
    $g(X) = \sum_{\substack{X'\in \bbmB_S \\ X\subseteq X'}} f(X')$
    for all $X\in \bbmB_S$ if and only if $f(X) =
    \sum_{\substack{X'\in \bbmB_S \\ X\subseteq X'}}
    (-1)^{|X'|-|X|} g(X')$ for all $X\in \bbmB_S$.
\end{proposition}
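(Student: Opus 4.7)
The plan is to reduce to Proposition~\ref{prp:mob-inversion} applied to the \emph{dual} of the Boolean lattice, namely the poset $P \defeq (2^S, \supseteq)$. The key observation is that for any $X \in 2^S$, the set $\{X' \in \bbmB_S \mid X \subseteq X'\}$ appearing in both sums of the statement coincides with the principal downset $\frakI_P(X)$ of $P$. Accordingly, the hypothesis ``$g(X) = \sum_{X \subseteq X'} f(X')$ for all $X$'' rewrites as ``$g(X) = \sum_{X' \leq_P X} f(X')$ for all $X$'', which by Proposition~\ref{prp:mob-inversion} is equivalent to $f(X) = \sum_{X' \leq_P X} \mu_P(X', X) g(X') = \sum_{X \subseteq X'} \mu_P(X', X) g(X')$ for all $X$.

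It thus suffices to establish that $\mu_P(X', X) = (-1)^{|X'|-|X|}$ whenever $X \subseteq X'$. The natural approach is induction on $n \defeq |X'|-|X|$. The base case $n = 0$ amounts to $\mu_P(X,X) = 1 = (-1)^0$, which holds by Definition~\ref{def:mobius}. For $n > 0$, the recursive clause of that definition, combined with the equivalence ``$X' <_P Z \leq_P X$ iff $X \subseteq Z \subsetneq X'$'', yields
\[\mu_P(X', X) = -\sum_{X \subseteq Z \subsetneq X'} \mu_P(Z, X).\]
By the induction hypothesis each term equals $(-1)^{|Z|-|X|}$; grouping the $Z$'s by $k \defeq |Z|-|X|$ (there being $\binom{n}{k}$ such $Z$) produces $-\sum_{k=0}^{n-1}\binom{n}{k}(-1)^k$, which by the binomial identity $\sum_{k=0}^{n}\binom{n}{k}(-1)^k = 0$ simplifies to $-\bigl(-(-1)^n\bigr) = (-1)^n$, as desired.

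The calculation is largely mechanical, and no real obstacle is expected; the only nontrivial ingredient is the standard value of the Möbius function on the Boolean lattice, which can alternatively be quoted from the literature. Once this value is in hand, the proposition drops out of Möbius inversion applied to the dual poset, with no further work.
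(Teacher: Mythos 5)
Your proof is correct and follows the same route as the paper, which simply cites Stanley for the two ingredients you supply explicitly: the value $\mu_{\bbmB_S}(X,Y)=(-1)^{|Y|-|X|}$ and the dual form of M\"obius inversion (your application of Proposition~\ref{prp:mob-inversion} to $(2^S,\supseteq)$ is exactly that dual form). Your inductive computation of the M\"obius value via the alternating binomial identity is a correct self-contained substitute for the citation.
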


This result can be obtained by observing that 
$\mu_{\bbmB_S}(X,Y) = (-1)^{|Y|-|X|}$ for~$X\subseteq Y$ as shown in~\cite[Example
3.8.1]{stanley2011enumerative} and using the dual form of the Möbius inversion
formula~\cite[Proposition 3.7.2]{stanley2011enumerative}.

\section{The Non-Cancelling Intersections Conjecture}
\label{sec:conj}
\begin{toappendix}
\label{apx:conj}
\end{toappendix}
In this section we state the conjecture and illustrate it with a few toy 
examples. We adapt to our context notation and terminology
from~\cite{hirsch2017disjoint} for disjoint unions and subset complements.

\begin{definition}
  \label{def:dots}
  For two sets~$S,T$, the \emph{disjoint union}~$S\cupdot T$ equals~$S\cup T$
  if~$S\cap T = \emptyset$, else it is undefined.
  We generalize the disjoint union operator to more than two operands in the
  expected way.
  The \emph{subset
  complement}~$S\minusdot T$ equals~$S\setminus T$ if~$T\subseteq S$, else it is
  undefined.
\end{definition}

Notice that

\begin{equation}
  \label{eq:dot-duality}
 S \cupdot T = U  \text{ iff } U\minusdot S = T \text{ iff } U\minusdot T = S.
\end{equation}

\begin{definition}
  \label{def:dotalgebra}
  Let~$\calF$ be a set family.
  The \emph{partial dot-algebra generated by~$\calF$}, denoted~$\bullet(\calF)$,
  is the smallest (for inclusion) set family of subsets of $2^{\bigcup \calF}$ which contains $\emptyset$ as well as all the sets
  of~$\calF$ and is closed under disjoint union and subset complement. Formally,
  it is defined inductively by:
  \begin{enumerate}
    \item First base case: $\emptyset \in \bullet(\calF)$; 
    \item Second base case: for each $X \in \calF$, we have $X \in \bullet(\calF)$;
    \item Induction: for each~$X_1,X_2\in \bullet(\calF)$, if $X_1 \cupdot X_2$
      (resp., $X_1\minusdot X_2$) is well-defined, then we have~$X_1\cupdot X_2 \in
      \bullet(\calF)$ (resp., $X_1\minusdot X_2 \in
      \bullet(\calF)$).
  \end{enumerate}
\end{definition}

Note that when $\calF$ is not empty, then item (1) is redundant as it
follows from (2) and (3): taking any $X\in \calF$, we have $X\in
\bullet(\calF)$ by item (2), and $X \minusdot X = \emptyset \in \bullet(\calF)$
by item~(3). 

Furthermore, observe that~$\bullet(\calF)$ is always finite because~$\calF$ is. 
What we call
a \emph{witnessing tree of~$X\in \bullet(\calF)$} is a rooted ordered tree whose
leaves are annotated with $\emptyset$ or with elements of~$\calF$ and whose internal nodes are
annotated with either~$\cupdot$ or~$\minusdot$ (with the node being binary if it
is is annotated with~$\minusdot$), with the expected semantics. Obviously, we
have~$X \in \bullet(\calF)$ if and only if such a witnessing tree exists.

\begin{example}
  \label{expl:dot-algebra}
  Let $\calF_1 = \{\{a,d\}, \{b,e\}, \{a,b,c\}\}$. Then $\bullet(\calF_1) = \calF_1 \cup
  \{\emptyset, \{a,b,d,e\}\}$. Let $\calF_2 = \{\{a,c\}, \{b,c\}, \{c\}\}$. Then we
  have $\bullet(\calF_2) = 2^{\{a,b,c\}}$. Two trees~$T_0, T'_0$ which witness
  that~$\{a,b,c\}\in \bullet(\calF_2)$ are shown in Figure~\ref{fig:wit-trees}. 
\end{example}

\begin{figure}
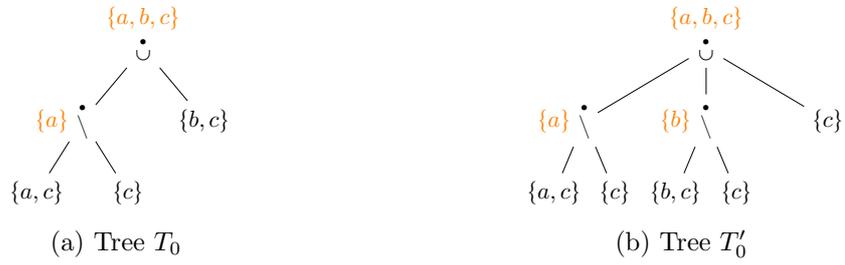
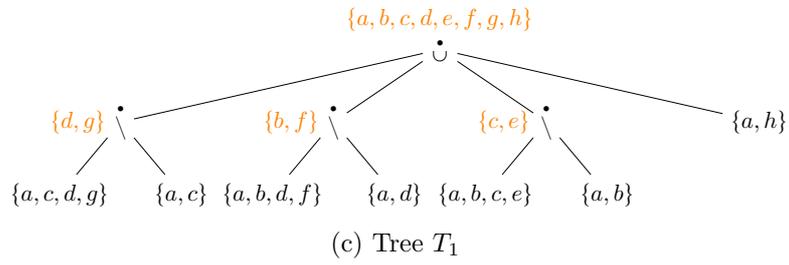
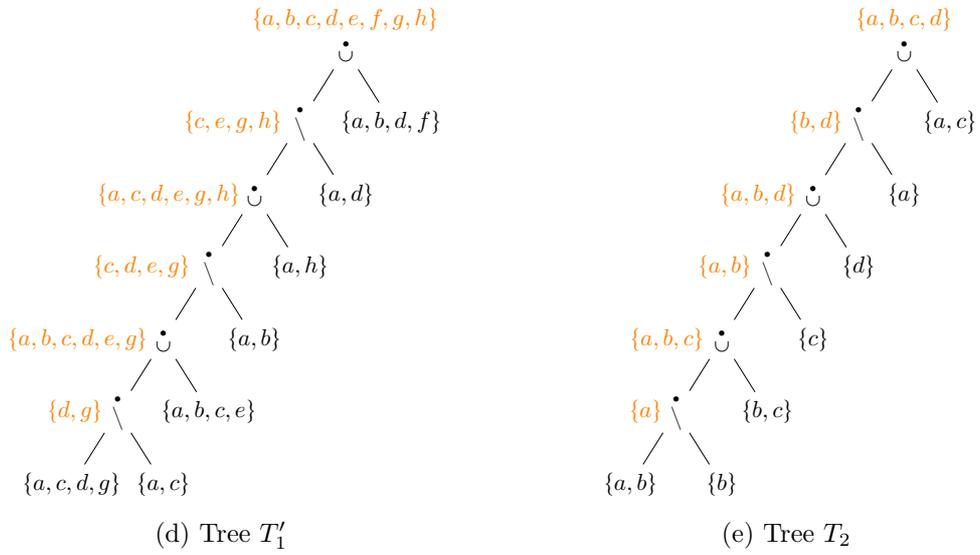

\begin{minipage}{\linewidth}

    \begin{minipage}[t]{.5\linewidth}  
    \noindent  
    \begin{subfigure}[b]{\linewidth}
      \centering
      \scalebox{0.8}{
	\begin{tikzpicture}[level distance=1.2cm,
  level 1/.style={sibling distance=2cm},
  level 2/.style={sibling distance=1.5cm}]
	\input{figures/tree0}
    \end{tikzpicture}}
        \caption{Tree~$T_0$}
        \label{fig:T0}
      \end{subfigure}
      \end{minipage}
      \hfill
      \noindent
  \begin{minipage}[t]{.5\linewidth}  
      \noindent  
      \begin{subfigure}[b]{\linewidth}
        \centering
        \scalebox{0.8}{
  	\begin{tikzpicture}[level distance=1.2cm,
    level 1/.style={sibling distance=2cm},
    level 2/.style={sibling distance=1cm}]
          \usetikzlibrary{trees}
  	\input{figures/treep0}
      \end{tikzpicture}}
          \caption{Tree~$T'_0$}
          \label{fig:Tp0}
        \end{subfigure}
        \end{minipage}
\end{minipage}\vspace{1cm}
\begin{minipage}{\linewidth}
      \begin{subfigure}[b]{\linewidth}
        \centering
        \scalebox{0.8}{
  	\begin{tikzpicture}[level distance=1.2cm,
    level 1/.style={sibling distance=3.5cm},
    level 2/.style={sibling distance=2cm}]
          \usetikzlibrary{trees}
  	\input{figures/tree1}
      \end{tikzpicture}}
          \caption{Tree~$T_1$}
          \label{fig:T1}
        \end{subfigure}
\end{minipage}\vspace{1cm}
\begin{minipage}{\linewidth}
  \begin{minipage}{.5\linewidth}
      \begin{subfigure}[b]{\linewidth}
        \centering
        \scalebox{0.8}{
  	\begin{tikzpicture}[level distance=1.2cm,
    level all/.style={sibling distance=3cm}]
          \usetikzlibrary{trees}
  	\input{figures/treep1}
      \end{tikzpicture}}
          \caption{Tree~$T'_1$}
          \label{fig:Tp1}
        \end{subfigure}
  \end{minipage}
  \hfill\noindent
  \begin{minipage}{.5\linewidth}
      \begin{subfigure}[b]{\linewidth}
        \centering
        \scalebox{0.8}{
  	\begin{tikzpicture}[level distance=1.2cm,
    level all/.style={sibling distance=3cm}]
          \usetikzlibrary{trees}
  	\input{figures/tree2}
      \end{tikzpicture}}
          \caption{Tree~$T_2$}
          \label{fig:T2}
        \end{subfigure}
  \end{minipage}
\end{minipage}
  \caption{Various witnessing trees. The sets that internal nodes
  correspond to are shown in orange.}
  \label{fig:wit-trees}
\end{figure}

We are now ready to state
the conjecture.

\begin{conjecture}[$\nci$ conjecture, formulation I]
  \label{conj:general}
  For every intersection lattice~$L$, we have~$\onehat \in
  \bullet(\nci(L))$.
\end{conjecture}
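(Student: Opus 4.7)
The plan is to combine the reductions sketched elsewhere in the paper with an induction on the number of cancelling non-trivial intersections $k \defeq |\nti(L) \setminus \nci(L)|$. First I would invoke the reduction of Section~\ref{sec:tight} to restrict attention to tight intersection lattices, and then the equivalent reformulation of Section~\ref{sec:alt} to work on principal downsets of a Boolean lattice~$\bbmB_S$; in that setting, elements of $\nci(L)$ correspond to principal downsets generated by elements of~$\bbmB_S$ whose Möbius coefficient with respect to the target downset is nonzero. The base case $k=0$ is exactly Fact~\ref{fact:annoyingfact} / Lemma~\ref{lem:allreach}, and the case $k=1$ is Theorem~\ref{thm:partial}; these will serve as the starting points of the induction.

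For the inductive step ($k \geq 2$), my guide is the indicator-function identity that Proposition~\ref{prp:incl-excl-mob} yields when applied to the measure $\xi(U) = |U \cap T|$ for an arbitrary $T \subseteq \onehat$: one obtains
\[
\mathbbm{1}_{\onehat} \;=\; -\sum_{U \in \nci(L)} \mu_L(U,\onehat)\,\mathbbm{1}_U,
\]
so the $\ZZ$-linear combination of indicators realizing $\onehat$ is already cancellation-free at the level of coefficients. Since $\mathbbm{1}_{A \cupdot B} = \mathbbm{1}_A + \mathbbm{1}_B$ and $\mathbbm{1}_{A \minusdot B} = \mathbbm{1}_A - \mathbbm{1}_B$, any witnessing tree for $\onehat \in \bullet(\nci(L))$ must realize precisely this linear combination. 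My strategy would be to pick a cancelling element $U$ that is maximal in $L$ among cancellations (so every element strictly above $U$ other than $\onehat$ is non-cancelling), and attempt to substitute it out: using Fact~\ref{fact:union-min}, $U$ decomposes as a disjoint union indexed by its sub-intersections, and the hope is that this decomposition can be folded into the expression, reducing to a problem with one fewer cancellation that the inductive hypothesis handles. The one-cancellation case should be adaptable, since it already shows how to route a single cancelling element out of the expression.

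The hard part—and this is where I expect the argument to be genuinely stuck, matching the paper's own assessment—is that $\cupdot$ and $\minusdot$ are \emph{partial} operations: at every internal node of the witnessing tree the intermediate set must be genuinely well-defined, and every $\minusdot$ needs its right argument contained in its left argument. So while the linear-algebraic identity above guarantees that the \emph{coefficients} balance, producing a valid \emph{ordering} of operations that never synthesizes a cancelling intersection as an intermediate set is a tight combinatorial constraint. Concretely, the substitution strategy of the previous paragraph might force a sub-expression whose value is precisely a cancelling $U'$, defeating the purpose. Overcoming this appears to require a global construction rather than a local elimination, and it is exactly for this reason that beyond $k=1$ the authors only obtain partial results (e.g.\ under a decomposability hypothesis mentioned in Section~\ref{sec:final}).
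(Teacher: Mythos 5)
The statement you are asked to prove is precisely the open conjecture of the paper: the authors do not prove it, and your proposal does not either. You correctly assemble the known partial results --- the reduction to tight lattices (Lemmas~\ref{lem:to-tight} and~\ref{lem:from-full}), the transfer to downsets of the Boolean lattice (Lemma~\ref{lem:embeds}), the cancellation-free case (Fact~\ref{fact:annoyingfact}, Lemma~\ref{lem:allreach}), and the single-zero case (Theorem~\ref{thm:partial}) --- and your indicator-function identity is essentially Proposition~\ref{prp:full-multiplicity}: on a full lattice the multiplicities in any witnessing tree are forced to equal $-\mu_L(U,\onehat)$, so the obstruction is indeed not arithmetic but the partiality of $\cupdot$ and $\minusdot$. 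But the inductive step on the number of cancellations is never carried out: ``pick a maximal cancelling $U$, decompose it via Fact~\ref{fact:union-min}, and hope the decomposition folds into the expression'' is exactly the step that nobody knows how to perform, and you acknowledge as much. A plan whose key step is admitted to be stuck is not a proof; the conjecture remains open.

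One further inaccuracy is worth flagging: your claim that ``the case $k=1$ is Theorem~\ref{thm:partial}'', with $k = |\nti(L)\setminus\nci(L)|$ counted in the intersection lattice, does not follow from the reductions you invoke. Under the translation of Lemma~\ref{lem:embeds} (with $S=\onehat_L$ and $\calI = \frakI_{\bbmB_S}(\calF)$), every element of $\calI$ that is \emph{not} of the form $\bigcap\calT$ also becomes a non-trivial zero of $\calI$, so an intersection lattice with a single cancelling intersection generically yields a downset with many non-trivial zeros, of which Theorem~\ref{thm:partial} lets you avoid only one. This is why the paper is careful to say that the one-cancellation case is handled only ``in the equivalent formulation on Boolean lattices,'' i.e., when $|\ntz_{\bbmB_S}(\calI)|=1$. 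So even the claimed starting points of your induction do not line up with the quantity you induct on, and the base cases would have to be restated before the (missing) inductive step could even be attempted.
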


In other words, the conjecture states that, given any non-trivial set family, we
can always express the union of the sets by starting with the set intersections
that do not cancel in the inclusion-exclusion formula and applying the disjoint
union and subset complement operators.

\begin{example}
  \label{expl:conj-expls}
  
  Continuing Example~\ref{expl:intersection-lattices}, the nodes corresponding to sets of~$\nci(L_i)$
  are colored in orange in Figure~\ref{fig:int-latt}: they are the nodes
  different from $\onehat$ with a non-zero value of the Möbius function. We show next that~$\onehat \in \bullet(\nci(L_i))$ for all
  of them.

  \begin{description}
    \item[\textbf{$L_1$.}]
      We have~$\onehat = \{n\in \NN \mid n \equiv 0 \pmod 2 \text{ or } n
      \equiv 0 \pmod 3\} = [\{n \in \NN \mid n \equiv 0 \pmod 2\} \minusdot \{n
      \in \NN \mid n \equiv 0 \pmod 6\}] \cupdot \{n \in \NN \mid n \equiv 0
      \pmod 3\}$.
    \item[\textbf{$L_2$.}]
      We have~$\onehat = \{a,b,c,d\} = [\{a,d\} \minusdot \{d\}] \cupdot
      [\{b,d\} \minusdot \{d\}] \cupdot [\{c,d\}]$.
    \item[\textbf{$L_3$.}] 
      We have~$\onehat = \{a,b,c,d\} = \{a\} \cupdot \{b\} \cupdot \{c\}
      \cupdot \{d\} $.
    \item[\textbf{$L_4$.}]  Two trees~$T_1,T'_1$
      which witness that~$\onehat \in \bullet(\nci(L_4))$ can be found in
      Figure~\ref{fig:wit-trees}.
    \item[\textbf{$L_5$.}]  A witnessing tree for~$L_5$ can be found in
      Figure~\ref{fig:complex}.
  \end{description}
\end{example}

The motivation for the conjecture is to understand whether the
inclusion-exclusion formula can be understood using the Boolean operations of
disjoint union and subset complement. Specifically, let $\calF$ be a
non-trivial set family, consider its intersection lattice~$\bbmL_\calF$, recall that
$\onehat = \bigcup \calF$, and let
$\xi$ be a measure on~$2^{\onehat}$. We know that the measure 
of~$\onehat$ can be obtained from the measure of the non-cancelling
intersections, namely, Proposition~\ref{prp:incl-excl-mob} expresses the measure
$\xi(\onehat)$ of the set~$\onehat$ as an arithmetic combination of the measures $\xi(S)$ with $S \in \nci(L)$.
The conjecture says that we can also express the \emph{set} $\onehat$ from
the \emph{sets} $S$ with $S \in \nci(L)$ using the Boolean operations of disjoint union
and subset complement. 

\paragraph*{Dual version: non-cancelling unions conjecture.}
We end this section by presenting a dual version of the $\nci$
conjecture that focuses on unions instead of intersections, and that we
call the \emph{non-cancelling unions} ($\ncu$) conjecture.
Call a set family~$\calF$ \emph{co-trivial} if it is empty or if there is~$X\in \calF$ such
that~$X = \bigcap \calF$, and \emph{non-co-trivial otherwise}. Define the
\emph{union lattice}~$\bbmU_\calF$ of a non-co-trivial set family~$\calF$ as follows:
let~$R_\emptyset \defeq \bigcap \calF$ and~$R_\calT \defeq \bigcup
\calT$ for~$\calT \subseteq \calF$, $\calT\neq \emptyset$, and~$\bbmU_\calF \defeq (\{R_\calT \mid \calT \subseteq \calF\},
\subseteq)$. Then~$\bbmU_\calF$ is a lattice whose join operation corresponds to set union, has a greatest element~$\onehat = \bigcup \calF$ and a least element~$\zerohat = \bigcap \calF$ that are distinct. 
A lattice is a union lattice if it is of the form~$\bbmU_\calF$ for some non-co-trivial set family.
Analogously to Proposition~\ref{prp:incl-excl-mob}, it is easy to show that
for a union lattice~$L$ and measure~$\xi$ on~$2^{\onehat}$, we have
  \begin{equation}\label{eq:incl-excl-mob-unions}
  \xi(\zerohat) = - \sum_{\substack{U\in L \\ U \neq \zerohat}}  \mu_L(\zerohat,U) \xi(U).
\end{equation}
Define then the
\emph{non-cancelling (non-trivial) unions of~$L$} as 
\[\ncu(L) \defeq \{U \in L \mid U\neq \zerohat, \mu_L(\zerohat,U) \neq 0  \}.\]
The non-cancelling unions conjecture is then:

\begin{conjecture}[$\ncu$ conjecture]
  \label{conj:ncu}
  For every union lattice~$L$, we have~$\zerohat \in
  \bullet(\ncu(L))$.
\end{conjecture}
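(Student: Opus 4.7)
The plan is to reduce the non-cancelling unions conjecture to the non-cancelling intersections conjecture via a complementation duality. Given a non-co-trivial family~$\calF$ with universe $U = \bigcup \calF$ and $K = \bigcap \calF$, I would introduce the complemented family $\calF^c \defeq \{U \setminus X \mid X \in \calF\}$. First I would verify that $\calF^c$ is non-trivial: its union is $U \setminus K$, and this could coincide with some $U \setminus X$ for $X \in \calF$ only if $X = K$, which is excluded by the non-co-triviality of $\calF$. Then I would check that the map $\phi(V) \defeq U \setminus V$ is an order-reversing bijection from $\bbmU_\calF$ onto $\bbmL_{\calF^c}$ that sends $\zerohat_{\bbmU_\calF} = K$ to $\onehat_{\bbmL_{\calF^c}} = U \setminus K$. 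Combining this with the identity $\mu_{L^{op}}(x,y) = \mu_L(y,x)$ for dual posets, $\phi$ restricts to a bijection $\ncu(\bbmU_\calF) \simeq \nci(\bbmL_{\calF^c})$.

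Assuming the NCI conjecture applied to $\calF^c$, there is a witnessing tree $T$ for $U \setminus K = \onehat_{\bbmL_{\calF^c}} \in \bullet(\nci(\bbmL_{\calF^c}))$, whose leaves are $\emptyset$ or sets of the form $W_i = U \setminus V_i$ for $V_i \in \ncu(\bbmU_\calF)$. To convert this into a witness for $K \in \bullet(\ncu(\bbmU_\calF))$ I would use the identity $K = U \minusdot (U \setminus K)$, which is well-defined since $U \setminus K \subseteq U$. Each leaf $W_i$ of $T$ can itself be rewritten as $U \minusdot V_i$, so provided that $U$ lies in $\bullet(\ncu(\bbmU_\calF))$, we may derive every $W_i$ from $U$ and $V_i$, plug these into $T$ to obtain $U \setminus K$, and finally apply one more subtraction to get $K$.

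The main obstacle is precisely the lemma ``$U \in \bullet(\ncu(\bbmU_\calF))$''. When $\mu_{\bbmU_\calF}(\zerohat, \onehat) \neq 0$ this is trivial, since then $\onehat = U \in \ncu(\bbmU_\calF)$; but this Möbius value can vanish (for instance in union lattices dual to $L_3$ or $L_4$ of Example~\ref{expl:intersection-lattices}), and in that case $U$ must be built from strictly smaller elements of $\ncu$. This sub-problem has essentially the same flavor as NCU itself, and I expect it to require arguments of the same kind as Theorem~\ref{thm:partial}. A potential workaround is to bypass $U$ entirely by searching for some $V \in \ncu(\bbmU_\calF)$ such that $V \setminus K$ lies in $\bullet(\ncu(\bbmU_\calF))$, and then using $K = V \minusdot (V \setminus K)$; in small cases such a $V$ can be exhibited by peeling off singletons of $U \setminus K$ via differences of $\ncu$ elements and recombining them disjointly. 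Establishing the existence of such a $V$ in general, however, appears to be of the same order of difficulty as the original conjecture, and is where I expect the genuine work to lie.
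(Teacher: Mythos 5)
You should first be aware that the statement you were asked to prove is an open conjecture: the paper does not prove it, and the closest thing to a ``proof'' it offers is the appendix argument (after Section~\ref{sec:conj}) showing that the $\ncu$ conjecture is equivalent to the $\nci$ conjecture. Your proposal follows exactly that route: complement the family inside $U=\bigcup\calF$, check that the complemented family is non-trivial, transport the M\"obius values through the order-reversing bijection (this is the paper's Fact~\ref{fact:compl} together with the identity $\mu_{L^{\mathrm{op}}}(x,y)=\mu_L(y,x)$), and then try to convert a witnessing tree for $\onehat_{\bbmL_{\calF^c}}$ into one for $\zerohat_{\bbmU_\calF}$. All of that bookkeeping is correct. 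The problem is the conversion step, and you have put your finger on it yourself.

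The gap is the unproven lemma ``$U\in\bullet(\ncu(\bbmU_\calF))$''. The structural reason it is needed is that complementation inside $U$ dualizes only the \emph{left} operand of each operation: $U\setminus(A\cupdot B)=(U\setminus A)\minusdot B$ and $U\setminus(A\minusdot B)=(U\setminus A)\cupdot B$, so the transformed tree still requires the un-complemented right operands, and the complemented leaves $U\setminus V_i$ are only available once $U$ itself is. Without that lemma your argument proves nothing, and you concede that establishing it looks as hard as the conjecture; I believe that concession is accurate rather than overly pessimistic. Indeed, the paper's own sketch asserts that the analogous transfer claim ($\dagger$) (``$R\in\bullet(\ncu(L'))$ implies $\dual_\calF(R)\in\bullet(\nci(L))$'') follows by a ``routine induction'', but taken literally that claim applied to $R=\emptyset$ (which lies in every dot-algebra by definition) would give $\onehat_L=\dual_\calF(\emptyset)\in\bullet(\nci(L))$ unconditionally, i.e., would settle the $\nci$ conjecture itself --- so the induction cannot be routine, and the obstacle you isolate is genuinely present in the paper's argument as well. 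In summary: same approach as the paper, correct duality setup, but not a proof of the statement (nor even a complete proof of the equivalence with $\nci$); if you want a clean equivalence it is likely safer to pass through the $\ncpd$ reformulation of Section~\ref{sec:alt}, where the top-bottom duality of $\bbmB_S$ acts by relabelling elements of the ground set $\bbmB_S$ and therefore commutes with $\cupdot$ and $\minusdot$ on configurations, rather than through complementation of the underlying sets, which does not.
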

Notice here that the definition of the dot-algebra is unchanged: we
still use disjoint complement and disjoint union. 
Informally again, this is stating that, given a set family, we can always express their intersection by applying the disjoint union and subset complement operations to the set unions that 
do not cancel in the (“union form” of the) inclusion-exclusion formula.

We show in Appendix~\ref{apx:conj} that, unsurprisingly, the~$\nci$ and~$\ncu$ conjectures 
are in fact equivalent. We
decided to focus on the intersections version of the conjecture
because inclusion-exclusion is most often presented in 
“intersection form”.

\begin{toappendix}
  We show in this section that the~$\nci$ and~$\ncu$ conjectures
  are equivalent. We only sketch the proofs as details are easy to
  fill.
  \begin{definition}
    \label{def:dual}
    Let~$\calF$ be a set family. For~$S\in 2^{\bigcup \calF}$,
    let~$\dual_\calF(S) \defeq (\bigcup \calF) \setminus S$, and
    for~$\calT\subseteq \calF$
    let~$\dual_\calF(\calT) \defeq \{\dual_\calF(X) \mid X \in \calT\}$.
  \end{definition}

  Observe then that~$\calF$ is non-trivial if and only
  if~$\dual_\calF(\calF)$ is non-co-trivial.

  \begin{fact}
    \label{fact:compl}
    Let~$L=\bbmL_\calF$ be an intersection lattice, and let~$L''$
    be the union lattice~$\bbmU_{\dual_\calF(\calF)}$
    of~$\dual_\calF(\calF)$, but with the order reversed. Then we
    have~$L\simeq L''$.
  \end{fact}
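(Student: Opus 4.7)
The plan is to exhibit an explicit order-reversing bijection $h\colon L \to \bbmU_{\dual_\calF(\calF)}$, namely $h(U) \defeq \dual_\calF(U) = (\bigcup\calF) \setminus U$. Since $\dual_\calF$ is an involution on $2^{\bigcup\calF}$ and every element of $L$ lies in $2^{\bigcup\calF}$, this immediately gives injectivity, and the order-reversal property $U_1 \subseteq U_2 \iff \dual_\calF(U_2) \subseteq \dual_\calF(U_1)$ is elementary. So once well-definedness and surjectivity are checked, the isomorphism $L \simeq L''$ (where the order on $L''$ is the order of $\bbmU_{\dual_\calF(\calF)}$ reversed) follows.

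For well-definedness, I would split on the two kinds of elements of $L$. If $U = S_\calT = \bigcap\calT$ for some nonempty $\calT \subseteq \calF$, then De Morgan gives
\[
h(U) = \Big(\bigcup\calF\Big) \setminus \bigcap_{X\in\calT} X \;=\; \bigcup_{X\in\calT} \dual_\calF(X) \;=\; \bigcup \dual_\calF(\calT),
\]
and since $\dual_\calF(\calT)$ is a nonempty subfamily of $\dual_\calF(\calF)$, this is precisely the element $R_{\dual_\calF(\calT)}$ of $\bbmU_{\dual_\calF(\calF)}$. If $U = S_\emptyset = \bigcup\calF$, then $h(U) = \emptyset$; and by De Morgan again $R_\emptyset = \bigcap \dual_\calF(\calF) = (\bigcup\calF) \setminus \bigcup\calF = \emptyset$, so the boundary elements are matched. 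For surjectivity, I would invert the construction: given any $R_{\calT'} \in L''$ with $\calT' \subseteq \dual_\calF(\calF)$, set $\calT \defeq \dual_\calF(\calT') \subseteq \calF$, and apply the same calculation (using that $\dual_\calF$ is also involutive on families, so $\dual_\calF(\calT) = \calT'$) to conclude $h(S_\calT) = R_{\calT'}$, again handling the empty case separately.

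The only subtle step, and the main obstacle such as it is, is making sure the two ``boundary'' conventions $S_\emptyset \defeq \bigcup\calF$ on the intersection-lattice side and $R_\emptyset \defeq \bigcap\dual_\calF(\calF)$ on the union-lattice side line up correctly under $h$; the computation $\dual_\calF(\bigcup\calF) = \emptyset = \bigcap\dual_\calF(\calF)$ takes care of this. Everything else is a routine double application of De Morgan's law, together with the observation that $\calF$ being non-trivial is exactly what makes $\dual_\calF(\calF)$ non-co-trivial, so that $\bbmU_{\dual_\calF(\calF)}$ is defined in the first place.
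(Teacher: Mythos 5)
Your proof is correct and takes essentially the same route as the paper: both exhibit the map sending $S_\calT$ to $R_{\dual_\calF(\calT)}$ (which is exactly complementation within $\bigcup\calF$) and verify it is an order-reversing bijection via De Morgan's law and the fact that relative complementation reverses inclusion. Your packaging of the map globally as $U\mapsto(\bigcup\calF)\setminus U$ makes injectivity and well-definedness slightly more immediate than the paper's piecewise description, but the substance is identical.
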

  \begin{proofsketch}
    One can check that the function that sends~$\onehat_L$
    to~$\onehat_{L''}$ ($=\zerohat_{\bbmU_{\dual_\calF(\calF)}}$) and~$S_\calT =\bigcap \calT$ for~$\calT
    \subseteq \calF$, $\calT\neq \emptyset$
    to~$R_{\dual_\calF(\calT)} = \bigcup\dual_\calF(\calT)$ is an
    isomorphism between~$L$ and~$L''$. This uses the following two
    simple facts:
    \begin{enumerate}
      \item For any sets~$A,B,C$ with~$B\cup C \subseteq A$ we have
        $A\setminus B \subseteq A \setminus C$ if and only if~$C
        \subseteq B$.
      \item For a set~$A$ and set family~$\calT$ we
        have~$\bigcap_{X\in \calT} A\setminus X = A \setminus
        \bigcup \calT$.\qedhere
      \end{enumerate}
  \end{proofsketch}
  The dual version of this fact also holds, i.e., for a union
  lattice $L=\bbmU_\calF$, letting~$L''$ be the intersection
  lattice $\bbmL_{\dual_\calF(\calF)}$ of~$\dual_\calF(\calF)$ but
  with the order reversed, we have~$L\simeq L''$. (This uses point (1) above and
  the dual of point (2), namely,
  $\bigcup_{X\in \calT} A\setminus X = A \setminus \bigcap
  \calT$.)

  We can then show that the~$\ncu$ conjecture implies the~$\nci$
  conjecture as follows. Let~$L = \bbmL_\calF$ be an intersection
  lattice, and let~$L' \defeq \bbmU_{\dual_\calF(\calF)}$ be the
  union lattice of~$\dual_\calF(\calF)$. Using
  Fact~\ref{fact:compl} and \cite[Proposition
  3]{rota1964foundations}, it is routine to show by induction
  on~$\bullet(\ncu(L'))$ the following claim ($\dagger$): for every~$R\in
  \bullet(\ncu(L'))$ we have~$\dual_\calF(R) \in \bullet(\nci(L))$.
  Since the~$\ncu$ conjecture holds we have~$\zerohat_{L'} \in
  \bullet(\ncu(L'))$, hence by~($\dagger$) we have
  $\dual_\calF(\zerohat_{L'}) \in \bullet(\nci(L))$. But observe
  that~$\zerohat_{L'} = \bigcap_{X\in \calF} \dual_\calF(X)$ by
  definition, which is equal to~$\bigcup \calF \setminus \bigcup
  \calF = \emptyset$ by Item (2) above.
  So~$\dual_\calF(\zerohat_{L'}) = \bigcup \calF$, and this
  is~$\onehat_L$, so indeed~$\onehat_L \in \nci(L)$.

  We can show that the~$\nci$ conjecture implies the~$\ncu$
  conjecture in the same way, using the dual version of
  Fact~\ref{fact:compl}. This completes the proof.
\end{toappendix}

\section{Simplifying to Tight Intersection Lattices}
\label{sec:tight}
\begin{toappendix}
\label{apx:tight}
\end{toappendix}
In this section we show that the $\nci$ conjecture 
can be studied without loss of generality on intersection lattices
satisfying a certain \emph{tightness} condition, that we define
next. Intuitively, an intersection lattice~$L$ is \emph{tight} if for
every~$U\in L$, $U\neq \onehat$, there is exactly one element~$x\in
\onehat$ such that~$\min_L(x) = U$. 
We also define the notion of an intersection lattice being
\emph{full}, which will be useful in the next section.

\begin{definition}
  \label{def:tight}
  An intersection lattice~$L$ is \emph{full} if for every~$U\in L$,~$U\neq
  \onehat$, we have~$\{x\in \onehat \mid \min_L(x) = U\} \neq \emptyset$.\footnote{Note that~$\{x\in
\onehat \mid \min_L(x) = \onehat\} = \emptyset$ because $\calF$ is
non-trivial.}
  The lattice~$L$ is in addition called \emph{tight} if for every~$U\in L$,~$U\neq
  \onehat$, we have~$|\{x\in \onehat \mid \min_L(x) = U\}| = 1$. 
\end{definition}

For instance, the intersection lattices~$L_2$ and~$L_4$ from
Figure~\ref{fig:int-latt} and~$L_5$ from Figure~\ref{fig:complex} are tight (hence full),
$L_1$ is full but not tight, and $L_3$ is not full (hence not
tight). Note that if an intersection lattice~$L=\bbmL_\calF$ is
tight, then~$\onehat$ is finite, i.e., the sets in~$\calF$ are
themselves finite: this can be seen by applying
Fact~\ref{fact:union-min} with~$U = \onehat$. 
We then claim that we can reformulate the $\nci$ conjecture as
follows:

\begin{conjecture}[$\nci$ conjecture, formulation II]
  \label{conj:tight}
  For every \emph{tight} intersection lattice~$L$, we have~$\onehat \in
  \bullet(\nci(L))$.
\end{conjecture}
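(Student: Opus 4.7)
The plan is to attempt induction on the number $c \defeq |\nti(L) \seminux \nci(L)|$ of cancelling non-trivial intersections in a tight intersection lattice $L$. The base case $c = 0$ is covered by Fact~\ref{fact:annoyingfact}: when every non-trivial intersection is non-cancelling, the inclusion-exclusion expression can be directly reordered to yield a valid expression using $\cupdot$ and $\minusdot$. For $c \geq 1$, the strategy is to reduce to an auxiliary problem with strictly fewer cancellations.

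Concretely, I would exploit the fact that every coatom of $L$ has Möbius value~$-1$ and hence lies in $\nci(L)$. Picking a coatom $U^* \in \nci(L)$, write $\onehat = U^* \cupdot (\onehat \minusdot U^*)$ and then try to construct $\onehat \minusdot U^*$ inside $\bullet(\nci(L))$. In the tight setting, $\onehat \minusdot U^*$ corresponds precisely to the elements $x_V$ for those $V \in \nti(L)$ incomparable with $U^*$, so the natural subproblem is to realize this subset by recursing on the ``contracted'' part of the lattice lying outside of $U^*$. The combinatorial identity $\sum_{W : U_0 < W \leq \onehat} \mu_L(W,\onehat) = 0$, valid for every cancelling $U_0$, would be leveraged to absorb the contributions of cancelling nodes into expressions involving only elements of $\nci(L)$. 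The existing partial result (Theorem~\ref{thm:partial}) would serve as a base building block once the recursion leaves at most one cancellation in the reduced problem.

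The main obstacle I anticipate is that Möbius values are inherently global: restricting $L$ to a sublattice or removing a coatom shifts Möbius values throughout the structure, so nodes that were non-cancelling may become cancelling in the reduced problem (and vice versa). This is presumably why Theorem~\ref{thm:partial} stops at $c=1$, and why a naïve peeling induction is unlikely to succeed without additional ideas. A promising route would be to replace the cancellation count by a stronger invariant that respects these global shifts---perhaps one based on the topology of the order complex of $L$, since classically the Möbius function coincides with a reduced Euler characteristic. This topological perspective, foreshadowed by the Euler-characteristic considerations announced in Section~\ref{sec:alt}, might supply the global coordination needed to handle multiple cancellations simultaneously, which appears to be the real heart of the difficulty.
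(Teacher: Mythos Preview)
There is a fundamental mismatch between what you are attempting and what the paper actually does with this statement. Conjecture~\ref{conj:tight} is \emph{not} proved in the paper; it is a reformulation of the open $\nci$ conjecture. What the paper establishes around this statement is the \emph{equivalence} of Formulation~II with Formulation~I: Lemma~\ref{lem:to-tight} shows that every intersection lattice is isomorphic to a tight one, and Lemma~\ref{lem:from-full} shows that a witnessing tree for a full intersection lattice transfers along an isomorphism to any other intersection lattice. Together these yield that the tight case implies the general case; the converse is immediate. That is the entire content the paper associates with Conjecture~\ref{conj:tight}.

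Your proposal, by contrast, is a research plan to prove the conjecture outright by induction on the number of cancelling intersections. You correctly identify the base case (Fact~\ref{fact:annoyingfact}) and the case of a single cancellation (Theorem~\ref{thm:partial}), but you also correctly diagnose why the induction step does not go through: removing a coatom or passing to a sublattice perturbs M\"obius values globally, so a node that was non-cancelling can become cancelling in the reduced lattice. This is exactly the obstruction the paper is unable to overcome, and your suggestion of replacing the cancellation count by a topological invariant is a heuristic, not an argument. In short, you have written down a sketch of an attack on an open problem and explained why it gets stuck, rather than the equivalence proof the paper actually supplies. If the goal is to match the paper, you should instead prove Lemmas~\ref{lem:to-tight} and~\ref{lem:from-full}.
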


We prove this in two steps.
First, we show that for every intersection lattice~$L$, there
exists a \emph{tight} intersection lattice~$L'$ that is isomorphic
to~$L$. 

\begin{lemma}
  \label{lem:to-tight}
  For every intersection lattice~$L$, there exists a tight
  intersection lattice~$L'$ such that~$L\simeq L'$.
\end{lemma}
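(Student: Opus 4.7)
The plan is to rebuild the underlying universe of $\calF$ so that every non-top element of $L$ gets exactly one ``private'' witness. Write $L = \bbmL_\calF$. For every $U \in L$ with $U \neq \onehat_L$, introduce a fresh symbol $x_U$; let $N \defeq \{x_U \mid U \in L,\; U \neq \onehat_L\}$, and for each $X \in \calF$ put $X' \defeq \{x_U \mid U \in L,\; U \subseteq X\} \subseteq N$. Set $\calF' \defeq \{X' \mid X \in \calF\}$ and $L' \defeq \bbmL_{\calF'}$. I claim that $L'$ is a tight intersection lattice with $L \simeq L'$.

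The key computation, obtained by unfolding definitions, is
\[
  \bigcap_{X \in \calT} X' \;=\; \{x_V \mid V \in L,\; V \subseteq S_\calT\}
  \quad \text{for every nonempty } \calT \subseteq \calF,
\]
together with $\bigcup \calF' = N$. This shows that the map $\varphi\colon L \to L'$ given by $\varphi(U) \defeq \{x_V \mid V \in L,\; V \neq \onehat_L,\; V \subseteq U\}$ is a well-defined bijection onto~$L'$, and since both orderings are inclusion, $\varphi$ is a poset isomorphism. Non-triviality of~$\calF'$ is immediate: non-triviality of~$\calF$ yields, for each $X \in \calF$, some $Y \in \calF$ with $Y \not\subseteq X$, and then $x_Y \in N \setminus X'$, so no $X'$ exhausts $\bigcup \calF' = N$.

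For tightness, for each $x_U \in N$ I would compute $\min_{L'}(x_U) = \bigcap \{X' \mid X \in \calF,\; U \subseteq X\}$. The only small lemma required is the identity $\bigcap \{X \in \calF \mid U \subseteq X\} = U$, which follows because $U = S_\calT$ for some $\calT \subseteq \calF$, and every $X \in \calT$ satisfies $U \subseteq X$, so the larger intersection is sandwiched between~$U$ and $S_\calT = U$. Combined with the key computation, this gives $\min_{L'}(x_U) = \varphi(U)$, and injectivity of $\varphi$ forces $x_U$ to be the \emph{unique} element of $N$ achieving this minimum, establishing tightness.

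I expect no serious obstacle: the construction is essentially the observation that $L$ realizes itself tightly as soon as we attach a distinct ``marker'' $x_U$ to each of its non-top elements, and the rest is routine bookkeeping. The only place where one needs to be a bit careful is checking that $S_\calT \neq \onehat_L$ for nonempty $\calT$ (so that the restriction $V \neq \onehat_L$ is automatic when $V \subseteq S_\calT$), which uses non-triviality of~$\calF$ as already observed in the discussion following Definition~\ref{def:intersection-lattice}.
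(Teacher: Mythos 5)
Your proof is correct and is essentially the paper's own approach: the paper invokes the construction of Remark~\ref{rmk:lattices-are-intersection} (realizing $L$ as the intersection lattice of its principal downsets, so that each non-top node $U$ has the unique private witness $U$ itself) and observes it is tight, which is exactly what your fresh symbols $x_U$ and the sets $X' = \{x_U \mid U \subseteq X\}$ achieve. The only cosmetic difference is that you take the (tagged) principal downsets of the members of $\calF$ rather than of all non-top elements of $L$; both yield the same tight lattice up to isomorphism, and your verification of the key identity $\bigcap\{X \in \calF \mid U \subseteq X\} = U$ is sound.
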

\begin{proof}
  It suffices to observe that the intersection lattice constructed in
  Remark~\ref{rmk:lattices-are-intersection} (call it $L'$) is tight. 
  Indeed, letting $U\in L\setminus \{\onehat_L\}$, corresponding to $\downarrow_L(\{U\})$ in $L'$,
  we have that $\{x\in \onehat_{L'} \mid \min_{L'}(x) = \downarrow_L(\{U\})\} = \{U\}$.
\end{proof}

Second, we show that, for two isomorphic intersection
lattices~$L,L'$ such that~$L'$ is full, then~$\onehat_{L'} \in
\bullet(\nci(L'))$ implies $\onehat_L \in \bullet(\nci(L))$. 

\begin{lemma}
  \label{lem:from-full}
  Let $L,L'$ be isomorphic intersection lattices such that~$L'$ is full and such that
  $\onehat_{L'} \in \bullet(\nci(L'))$. Then we have $\onehat_L \in
  \bullet(\nci(L))$.
\end{lemma}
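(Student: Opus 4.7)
The plan is to transport a witnessing tree for $\onehat_{L'}$ back to $L$ via the isomorphism $h\colon L\to L'$. Since the Möbius function is determined by the order, $h$ restricts to a bijection between $\nci(L)$ and $\nci(L')$. Given a witnessing tree $T'$ for $\onehat_{L'}\in\bullet(\nci(L'))$, I form a new tree $T$ by relabelling each leaf $U\in \nci(L')$ with $h^{-1}(U)\in\nci(L)$, while leaving every $\emptyset$-leaf and every internal $\cupdot$ or $\minusdot$ node unchanged. The goal is then to show that $T$ is a valid witnessing tree for $\onehat_L\in\bullet(\nci(L))$.

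The engine is Fact~\ref{fact:union-min}. For an intersection lattice $M$ and $U\in M\setminus\{\onehat_M\}$, write $A^M_U \defeq \{x\in \onehat_M\mid \min_M(x)=U\}$; the $A^M_U$'s are pairwise disjoint, and every $V\in M\setminus\{\onehat_M\}$ decomposes canonically as $V=\bigcupdot_{U\leq V} A^M_U$. I will then establish by induction on subtrees the following invariant: if a subtree of $T'$ evaluates to $X\subseteq \onehat_{L'}$, then $X=\bigcupdot_{U\in S} A^{L'}_U$ for some $S\subseteq L'\setminus\{\onehat_{L'}\}$, and the corresponding subtree of $T$ evaluates to $X^\star\defeq \bigcupdot_{U\in S} A^L_{h^{-1}(U)}$ via a sequence of well-defined $\cupdot$ and $\minusdot$ operations. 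The base case for a leaf $U\in\nci(L')$ follows from Fact~\ref{fact:union-min} applied to $U$ and to $h^{-1}(U)$, together with the fact that $h$ preserves the order.

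The inductive step is where the fullness of $L'$ is essential, and this is the only delicate point I foresee. For a $\cupdot$ node whose children evaluate to $X_1,X_2$ with $X_1\cap X_2=\emptyset$: because the $A^{L'}_U$'s are pairwise disjoint \emph{and nonempty} (by fullness of $L'$), the equality $X_1\cap X_2=\emptyset$ is equivalent to $S_1\cap S_2=\emptyset$, which transfers immediately to $X_1^\star\cap X_2^\star=\emptyset$ in $L$ since the $A^L_V$'s are also pairwise disjoint. Analogously, $X_2\subseteq X_1$ is---thanks again to the nonemptiness of the $A^{L'}_U$'s---equivalent to $S_2\subseteq S_1$, hence $X_2^\star\subseteq X_1^\star$. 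Without fullness the translation breaks: an empty atom $A^{L'}_U$ could be silently absorbed on the $L'$ side while corresponding to a nonempty $A^L_{h^{-1}(U)}$ on the $L$ side, so that a valid operation in $L'$ would have no valid $L$-counterpart.

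Finally, applying the invariant at the root of $T$, and using that the root of $T'$ evaluates to $\onehat_{L'}$ so that $S=L'\setminus\{\onehat_{L'}\}$, the root of $T$ evaluates to $\bigcupdot_{U\in L'\setminus\{\onehat_{L'}\}} A^L_{h^{-1}(U)} = \bigcupdot_{V\in L\setminus\{\onehat_L\}} A^L_V = \onehat_L$ by one last application of Fact~\ref{fact:union-min}. Note that this concluding step works even if $L$ itself is not full, since empty atoms $A^L_V$ contribute nothing to the union; this is why the hypothesis is needed only on $L'$.
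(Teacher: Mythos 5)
Your proof is correct and follows essentially the same route as the paper's: both transport the witnessing expression through the atom-level correspondence $A^{L'}_{U'} \leftrightarrow A^{L}_{h^{-1}(U')}$ coming from Fact~\ref{fact:union-min}, and both invoke fullness of $L'$ at exactly the point you identify. The only difference is packaging: the paper chooses a representative $\alpha_{U'} \in A^{L'}_{U'}$ for each $U' \neq \onehat_{L'}$ (possible by fullness) and pulls every set back along the resulting map $g\colon \onehat_L \to \onehat_{L'}$, so that preimages preserve disjointness and inclusion automatically and the explicit saturation invariant you maintain through the induction is not needed.
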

\begin{proof}
  Let~$h:L\to L'$ be an isomorphism. 
  Since~$L'$ is full, for~$U'\in L'$, $U' \neq \onehat_{L'}$, let
  $\alpha_{U'}$ be an element of $\{x'\in
  \onehat_{L'} \mid \min_{L'}(x') = U'\}$.
  Define~$g:\onehat_L \to \onehat_{L'}$
  by~$g(x) \defeq \alpha_{h(\min_{L}(x))}$ for~$x\in \onehat_L$. Note that~$g$
  is not necessarily injective. For~$x' \in \onehat_{L'}$ let~$g^{-1}(x')
  \defeq \{x\in \onehat_L \mid g(x) = x'\}$, and for~$X'\subseteq \onehat_{L'}$
  let $g^{-1}(X') \defeq \bigcup_{x'\in X'} g^{-1}(x')$. From
  Fact~\ref{fact:union-min} and because~$L'$ is full it is easy to see that
  ($\dagger$) for every~$U'\in L'$ we have~$g^{-1}(U') = h^{-1}(U')$. In
  particular we have~$g^{-1}(\onehat_{L'}) = \onehat_L$. Hence, it is enough to
  prove that for all~$X' \in \bullet(\nci(L'))$ we have~$g^{-1}(X')\in
  \bullet(\nci(L))$. We show this by induction on~$\nci(L')$. 
  The first base case is when~$X' = \emptyset$. But then observe
  that~$g^{-1}(X') = \emptyset$, and that~$\emptyset \in \bullet(\nci(L))$ by definition.
  The second base case is
  when~$X' \in \nci(L')$. But then~$g^{-1}(X') \in \nci(L)$ as well by
  ($\dagger$) and because~$h$ is an isomorphism so
  clearly~$\mu_{L'}(X',\onehat_{L'}) = \mu_L(h^{-1}(X'),\onehat_L)$. The first
  inductive case is when~$X' = X'_1 \cupdot X'_2$ with~$X'_1,X'_2 \in
  \bullet(\nci(L'))$. By induction hypothesis we have~$g^{-1}(X'_1),
  g^{-1}(X'_2) \in \bullet(\nci(L))$. Moreover clearly~$g^{-1}(X'_1)\cap
  g^{-1}(X'_2) = \emptyset$ since~$X'_1$ and~$X'_2$ are disjoint. So
  indeed~$g^{-1}(X') = g^{-1}(X'_1) \cupdot g^{-1}(X'_2) \in \bullet(\nci(L))$.
  The second inductive case is when~$X' = X'_1 \minusdot X'_2$ with~$X'_1,X'_2
  \in \bullet(\nci(L'))$. By induction hypothesis again we have~$g^{-1}(X'_1),
  g^{-1}(X'_2) \in \bullet(\nci(L))$. Moreover clearly~$g^{-1}(X'_2)\subseteq
  g^{-1}(X'_1)$ since~$X'_2 \subseteq X'_1$. So indeed~$g^{-1}(X') =
  g^{-1}(X'_1) \minusdot g^{-1}(X'_2) \in \bullet(\nci(L))$. This concludes the
  proof.
\end{proof}

This indeed proves that the two formulations are equivalent. We illustrate this last
construction in the following example.

\begin{example}
  Consider the intersection lattices~$L_3$ and~$L_4$ from
  Figure~\ref{fig:int-latt}, the isomorphism $h:L_3 \to L_4$ defined
  following their drawings, and~$\alpha_{U'}$ for~$U' \in L_4$~$U' \neq \onehat_{L_4}$ being the only element in $\{x' \in \onehat_{L_4} \mid \min_{L_4}(x') = U'\}$ (since~$L_4$ is tight). 
  Then we have,
  for instance, $g^{-1}(d) = \{a\}$,~$g^{-1}(a) = g^{-1}(f) = \emptyset$.
  Moreover, consider the tree~$T'_1$ from Figure~\ref{fig:Tp1} witnessing
  that~$\onehat_{L_4} \in \bullet(\nci(L_4))$. Applying~$g^{-1}$ to every node
  of this tree yields the tree~$T_2$ from Figure~\ref{fig:T2} that witnesses
  $\onehat_{L_4} \in \bullet(\nci(L_4))$. By contrast, consider the
  expression $\{a\} \cupdot \{b\} \cupdot \{c\} \cupdot \{d\}$ that also
  witnesses $\onehat_{L_3} \in \bullet(\nci(L_3))$. This expression cannot be
  translated through~$h$ into a tree for~$L_4$ as, e.g., the
  operation~$\{b\} \cupdot \{c\}$ would translate to~$\{a,c\} \cupdot \{a,b\}$
  in~$L_4$, which is not valid.
\end{example}

In fact we claim that, when~$L$ is a full intersection lattice, any
tree~$T$ witnessing that~$\onehat \in \bullet(\nci(L))$ must use all
of the sets in~$\nci(L)$. (This is not the case of the expression
$\{a\} \cupdot \{b\} \cupdot \{c\} \cupdot \{d\}$ for~$L_3$ for
instance, but~$L_3$ is not full.)  
  In a sense, this means that the full
intersection lattices are the hardest cases for the $\nci$ conjecture.
We formalize and state a stronger claim (on the non-trivial
intersections instead of only the non-cancelling ones) in the
remaining of this section.

\begin{definition}
  \label{multiplicity}
  Let~$\calF$ be a set family,~$X\in \bullet(\calF)$ and~$T$ a tree
  witnessing this. For a leaf~$\ell$ of~$T$, the \emph{polarity of~$\ell$
  in~$T$}, denoted~$\pol_T(\ell)$, is $1$ if the number of times we take the
  right edge of a~$\minusdot$ node in the path from the root of~$T$ of~$\ell$
  is even; and~$-1$ otherwise. The \emph{multiplicity of~$U\in \calF \cup \{\emptyset\}$ in~$T$}
  is then
\[\mult_T(U) \defeq \sum_{\substack{\ell \text{ leaf of T}\\\text{of the form } U}} \pol_T(\ell).\]
\end{definition}

In other words, if we see each~$U\in \calF \cup \{\emptyset\}$ as a variable and
see~$T$ as an arithmetic expression over these variables (by
replacing~$\cupdot$ nodes by~$+$ and~$\minusdot$ nodes by~$-$), then
the multiplicity of~$U$ in~$T$ is the coefficient of the
corresponding variable once we develop this expression and factorize.
We then have:

\begin{propositionrep}
  \label{prp:full-multiplicity}
   Let~$L$ be a full intersection lattice such that~$\onehat \in
   \bullet(\nti(L))$, and~$T$ a tree witnessing this. Then we have~$\mult_T(U)
   = - \mu_L(U,\onehat)$ for every~$U\in \nti(L)$.
\end{propositionrep}
\begin{proof}
  For~$U\in L$, $U\neq \onehat$, let~$\alpha_U$ be an
  element of $\{x\in \onehat \mid \min_L(x) = U\}$.
  We first define, for each $X\subseteq \onehat$, a function
  $\mucheck_{L,X}:L \to \ZZ$, and state three useful properties of
  these functions. The function $\mucheck_{L,X}$ for~$X\subseteq
  \onehat$ is  defined by $\mucheck_{L,X}(\onehat) \defeq 0$, and,
  for~$U\in \nti(L)$, by~$\mucheck_{L,X}(U) \defeq
  1-\sum_{\substack{U' \in L\\ U \subsetneq U'}}
  \mucheck_{L,X}(U')$ if~$\alpha_U \in X$, and $\mucheck_{L,X}(U)
  \defeq -\sum_{\substack{U' \in L\\ U \subsetneq U'}}
  \mucheck_{L,X}(U')$ if~$\alpha_U \notin X$. 
  
  The three properties are: 
  \begin{enumerate}
    \item \label{itemzero} We have $\mucheck_{L,\emptyset}(U) = 0$ for~$U\in L$.
    \item \label{itema} For every~$U\in \nti(L)$ we
      have~$\mucheck_{L,\onehat}(U) = - \mu_L(U,\onehat)$. 
    \item \label{itemb} For~$X_1, X_2 \subseteq \onehat$ such that~$X_1 \cap X_2 =
     \emptyset$ (resp., such that~$X_2 \subseteq X_1$) we have
     $\mucheck_{L,X_1\cupdot X_2} = \mucheck_{L,X_1} +
     \mucheck_{L,X_2}$ (resp., $\mucheck_{L,X_1\minusdot X_2} =
     \mucheck_{L,X_1} - \mucheck_{L,X_2}$). 
   \item \label{itemc} For~$U\in \nti(L)$ and~$U'\in
     L$, the value $\mucheck_{L,U}(U')$ is $1$ if~$U'=U$ and~$0$
     otherwise.
  \end{enumerate}
  These can all easily be established by top-down induction on~$L$.
  Now for a node~$n \in T$, let~$X_n$ be the subset of~$\onehat$
  represented by the subtree of~$T$ rooted at~$n$. We show by
  bottom-up induction on~$T$ that for every node~$n$ of~$T$
  and~$U\in \nti(L)$ we have~$\mult_{T_n}(U) =
  \mucheck_{L,X_n}(U)$. This will indeed imply the result by applying
  it to the root of~$T$ and using item (\ref{itema}), since we then
  have that~$X_n = \onehat$. The base case, when~$n$ is a leaf
  of~$T$, follows from item (\ref{itemc}) and item (\ref{itemzero}), while the inductive
  case, when~$n$ is an internal node of~$T$, follows
  from~(\ref{itemb}). 
\end{proof}

Since Proposition~\ref{prp:full-multiplicity} will not be strictly
needed to establish our main results, we defer its proof to
Appendix~\ref{apx:tight}. (We include this proposition as we
think that it is relevant to motivate the conjecture.)
Notice that Proposition~\ref{prp:full-multiplicity} can be false if we
do not impose intersection lattices to be full, see e.g., the expression
for~$L_3$ given in Example~\ref{expl:conj-expls}.

Furthermore, one can show that for any tight intersection lattice~$L$
we have~$\bullet(\nti(L)) = 2^{\onehat_L}$. In other words, intuitively, if we
use the non-trivial intersections instead of the non-canceling intersections
in the conjecture, then we can achieve all possible sets of elements:

\begin{fact}
  \label{fact:annoyingfact}
For every tight intersection lattice~$L$
we have~$\bullet(\nti(L)) = 2^{\onehat_L}$.
\end{fact}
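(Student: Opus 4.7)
The plan is to leverage the tightness hypothesis to identify each non-trivial intersection $U \in \nti(L)$ with a unique ground element $\alpha_U \in \onehat_L$ satisfying $\min_L(\alpha_U) = U$. The inclusion $\bullet(\nti(L)) \subseteq 2^{\onehat_L}$ is immediate from Definition~\ref{def:dotalgebra}, so the content lies in the reverse inclusion. With the identification $U \mapsto \alpha_U$, applying Fact~\ref{fact:union-min} to $\onehat_L$ shows that $\onehat_L$ is exhausted by the $\alpha_U$'s, one per element of $\nti(L)$. Since any $Y \subseteq \onehat_L$ is a disjoint union of singletons of its elements (with the empty set already available from the first base case of Definition~\ref{def:dotalgebra}), it is enough to prove the key claim that every singleton $\{\alpha_U\}$ for $U \in \nti(L)$ belongs to $\bullet(\nti(L))$.

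I would prove this claim by bottom-up induction on $U$ in $L$. Applying Fact~\ref{fact:union-min} to $U$ itself and using tightness yields the decomposition
\[U = \{\alpha_U\} \cupdot \bigcupdot_{\substack{U' \in \nti(L) \\ U' \subsetneq U}} \{\alpha_{U'}\},\]
where we use that $U \neq \onehat_L$ forces every $U' \subseteq U$ to lie in $\nti(L)$. By~\eqref{eq:dot-duality}, we can extract $\{\alpha_U\}$ as $U$ minus this disjoint union of smaller singletons. The smaller singletons belong to $\bullet(\nti(L))$ by the induction hypothesis, and $U$ itself lies in $\nti(L) \subseteq \bullet(\nti(L))$, so $\{\alpha_U\}$ is obtained by one $\cupdot$ followed by one $\minusdot$. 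The base case $U = \zerohat_L$ is immediate, since the right-hand union is empty and $\{\alpha_{\zerohat_L}\} = \zerohat_L \in \nti(L)$.

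I do not anticipate a serious obstacle: tightness is exactly the structural assumption that makes the recipe $\{\alpha_U\} = U \minusdot \bigcupdot_{U' \subsetneq U}\{\alpha_{U'}\}$ work. The only auxiliary checks are that the disjoint union above is genuinely disjoint, which follows from the injectivity of $U' \mapsto \alpha_{U'}$ (distinct $U'$ force distinct values of $\min_L$), and that the subset-complement is well-defined, which follows from the containment of the union in $U$ given by Fact~\ref{fact:union-min}. Both are immediate, so once tightness has reduced the problem to producing singletons, the argument is essentially routine.
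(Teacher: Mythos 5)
Your proof is correct. The singleton-extraction step at its heart --- obtaining the unique element introduced at $U$ by subtracting from $U$ everything introduced strictly below it --- is also the engine of the paper's proof, but the two arguments are organized differently. You induct bottom-up over the elements of $L$ and write the subtrahend explicitly as the disjoint union $\bigcupdot_{U' \subsetneq U}\{\alpha_{U'}\}$ of previously constructed singletons, then assemble an arbitrary $Y \subseteq \onehat_L$ from singletons at the end. The paper instead inducts on the downsets $D$ of $L$ (by size), proving the stronger intermediate claim that $\bullet(D) = 2^{X_D}$, where $X_D$ is the set of elements introduced at nodes of $D$; there the subtrahend $U \setminus \{x\}$ is produced in one shot by the induction hypothesis applied to $D' = D \setminus \{U\}$. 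The paper's version also works under the weaker hypothesis that each $U \neq \onehat$ introduces \emph{at most} one element rather than exactly one; your argument as written does not cover that case, though it would adapt by omitting the missing terms from the disjoint union. What your route buys is an explicit closed-form recipe $\{\alpha_U\} = U \minusdot \bigcupdot_{U' \subsetneq U}\{\alpha_{U'}\}$ for each singleton; what the paper's buys is the finer information that only the intersections lying in a given downset $D$ are needed to generate $2^{X_D}$.
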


\begin{proof}
  Generalizing from tight intersection lattices,
  we will show that the result holds more generally when we assume that for
  every $U \in L$, $U \neq \onehat$, there is \emph{at most} one element $x \in
  \onehat$ such that $\min_L(x) = U$.
  Let $L$ be an intersection lattice satisfying this weaker condition. We
  proceed by induction on the downsets of $L$, showing the following by
  induction on $0 \leq n < |L|$: for any downset $D$ of $L$ containing $n$ nodes, letting
  $X_D \subseteq \onehat$ be the set of elements introduced at nodes of~$D$,
  formally, $\{x \in \onehat \mid \min_L(x) \in D\}$, then we have $\bullet(D) =
  2^{X_D}$. Note that, when taking this claim with $n = |L|-1$, then the only
  downset $D$ of~$L$ containing $|L|-1$ nodes is the downset $D$ containing all
  intersections except $S_\emptyset$, i.e., precisely $\nti(L)$, and then $X_D =
  \onehat$, so this claim with $n = |L|-1$ is the result we want to show.

  The base case is $n = 0$, i.e., the downset $D=\emptyset$. Then~$X_D =
  \emptyset$ and $\bullet(\emptyset)=\{\emptyset\} = 2^{X_D}$ and we are done.

  For the induction case, let $D$ be a downset containing at least one node,
  let $U$ be some maximal node of~$D$, and let $D' = D \setminus \{U\}$. The
  first case is then
  there is no element introduced at~$U$, i.e., $U = \bigcup_{U' \in L, U'
  \subsetneq U} U'$, then we have $X_D = X_{D'}$, and the induction hypothesis
  gives us that $\bullet(D') = 2^{X_{D'}} = 2^{X_D}$, which concludes because
  $\bullet(D) \supseteq \bullet(D')$ (as $D' \subseteq D$)
  and clearly $\bullet(D) \subseteq 2^{X_D}$.
  Let us thus focus on the second case, where there is an element $x$ introduced
  at~$U$.

  We first show that $\{x\} \in \bullet(D)$. Indeed, by definition we have
  $U \in \bullet(D)$ because $U \in D$. 
  Further, all elements of~$U$
  except~$x$ were introduced below~$U$ and in~$D'$, and therefore 
  we have that $U \setminus \{x\} \subseteq X_{D'}$. Hence, by induction
  hypothesis, we have $U \setminus \{x\} \in \bullet(D')$, so $U \setminus \{x\}
  \in \bullet(D)$ because $D' \subseteq D$. From $U \in
  \bullet(D)$ and $U \setminus \{x\} \in \bullet(D)$ we deduce $\{x\} \in
  \bullet(D)$.

  Now let us show that $\bullet(D) = 2^{X_D}$. Pick $Y \in 2^{X_D}$. If $x
  \notin Y$, then $Y \in 2^{X_{D'}}$, so by induction hypothesis $Y \in
  \bullet(D')$, hence $Y \in \bullet(D)$, so we conclude immediately. If $x \in
  Y$, let $Y' = Y \setminus \{x\}$. We have $Y' \in 2^{X_{D'}}$, so by induction
  hypothesis we have $Y' \in \bullet(D')$ hence $Y' \in \bullet(D)$. Further, we
  have established in the previous paragraph that $\{x\} \in \bullet(D)$. We
  conclude that $Y \in \bullet(D)$. Thus, we have shown the inductive claim
  $\bullet(D) = 2^{X_D}$.
  
  We have concluded the induction, which as we explained establishes the claim.
\end{proof}

By Proposition~\ref{prp:full-multiplicity}, in a tree~$T$
which witnesses that~$\onehat \in \bullet(\nti(L))$ for a full intersection
lattice~$L$, the multiplicity of a cancelling intersection is
zero. Intuitively, we can see this as suggesting that we do not
really need such intersections, and so that the $\nci$ conjecture
should morally be true. 
In addition notice that, for a measure~$\xi$ on~$2^{\onehat}$,
applying~$\xi$ to~$T$ (and replacing~$\cupdot$ nodes by~$+$
and~$\minusdot$ nodes by~$-$) and then factoring gives back exactly
Equation~\eqref{eq:incl-excl-mob} according to
Proposition~\ref{prp:full-multiplicity}.

\section{Alternative Formulation: Dot-Algebra of Non-Cancelling Principal
Downsets in the Boolean Lattice}
\label{sec:alt}
\begin{toappendix}
\label{apx:alt}
\end{toappendix}
In this section we give an alternative formulation of the
conjecture which is about the Boolean lattice. It will in
particular use results from the previous section. As the structure of the
Boolean lattice is ``simpler'', the hope is that 
the alternative formulation of the conjecture may be
easier to attack. 
We present the alternative formulation and show that it is
equivalent in Section~\ref{subsec:alt}, and show in
Section~\ref{subsec:useful} some useful properties that we can deduce
from this alternative formulation.

\subsection{The $\ncpd$ conjecture}
\label{subsec:alt}

For~$S$ finite, we call a \emph{configuration of~$\bbmB_S$}, or
simply \emph{configuration} when clear from context, a 
subset of~$\bbmB_S$. 
 Since configurations are in particular set
 families, we will generally denote them with cursive letters.

We start by defining what we call \emph{generalized Möbius
functions}, that are parameterized by a configuration, and that
take only one argument.

\begin{definition}
\label{def:muhat}
Let~$S$ finite and~$\calC \subseteq \bbmB_S$ a configuration.
We define the \emph{(generalized) Möbius function~$\muhat_{\bbmB_S,\calC} : \bbmB_S \to \ZZ$ associated
to~$\calC$} by top-down induction by:
\[\muhat_{\bbmB_S,\calC}(X)\defeq \begin{cases}
  1 - \sum_{X \subsetneq X'} \muhat_{\bbmB_S,\calC}(X')\text{ if }X \in \calC\\
                           - \sum_{X \subsetneq X'} \muhat_{\bbmB_S,\calC}(X')\text{ if }X \notin \calC
                          \end{cases}. \]
In particular, the value $\muhat_{\bbmB_S,\calC}(S)$ is~$0$ if $S \notin
\calC$, and $1$ if $S \in \calC$.
\end{definition}

\begin{example}
\label{expl:0111001001110010}
    Let~$\calC$ be the configuration \[\{\{0\}, \{1\}, \{0,1\}, \{0,3\},
    \{1,2\}, \{1,3\}, \{0,1,3\}, \{1,2,3\}  \}\] of~$\bbmB_{\{0,1,2,3\}}$. We
    have depicted in Figure~\ref{fig:0111001001110010}
    this configuration and
    its associated Möbius function.
\end{example}

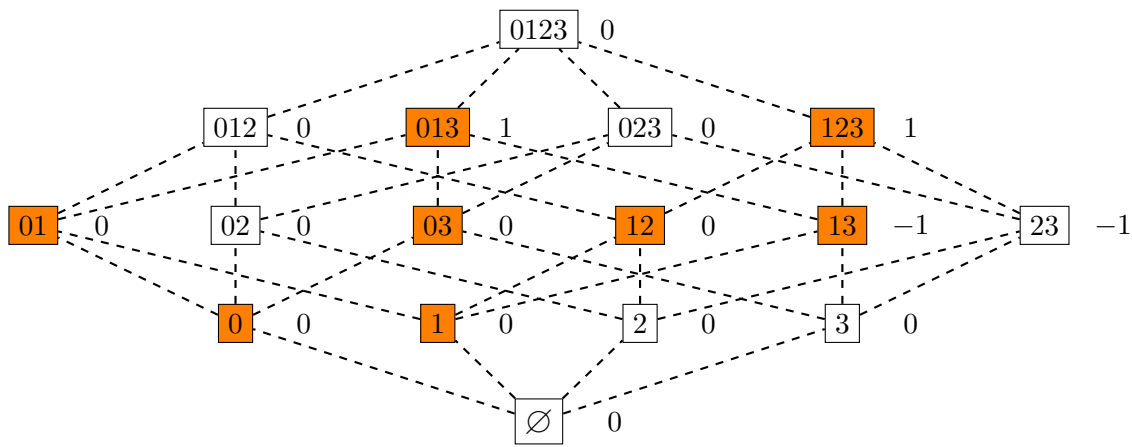
\begin{figure}
\begin{center}
\begin{tikzpicture}[xscale=1.4]
\tikzset{nodestyle/.style={draw,rectangle}}
 ===== NODES ====

  \node[nodestyle] (emptyset) at (0.0, 0.0) {$\emptyset$};
  \node[right of=emptyset] {$0$};
  \node[nodestyle,fill=orange] (0) at (-2.85, 1.3) {$0$};
  \node[right of=0,xshift=-.1cm] {$0$};
  \node[nodestyle,fill=orange] (1) at (-0.95, 1.3) {$1$};
  \node[right of=1,xshift=-.1cm] {$0$};
  \node[nodestyle] (2) at (0.95, 1.3) {$2$};
  \node[right of=2,xshift=-.1cm] {$0$};
  \node[nodestyle] (3) at (2.85, 1.3) {$3$};
  \node[right of=3,xshift=-.1cm] {$0$};
  \node[nodestyle,fill=orange] (01) at (-4.75, 2.6) {$01$};
  \node[right of=01,xshift=-.1cm] {$0$};
  \node[nodestyle] (02) at (-2.85, 2.6) {$02$};
  \node[right of=02,xshift=-.1cm] {$0$};
  \node[nodestyle,fill=orange] (03) at (-0.95, 2.6) {$03$};
  \node[right of=03,xshift=-.1cm] {$0$};
  \node[nodestyle,fill=orange] (12) at (0.95, 2.6) {$12$};
  \node[right of=12,xshift=-.1cm] {$0$};
  \node[nodestyle,fill=orange] (13) at (2.85, 2.6) {$13$};
  \node[right of=13,xshift=-.1cm] {$-1$};
  \node[nodestyle] (23) at (4.75, 2.6) {$23$};
  \node[right of=23,xshift=-.1cm] {$-1$};
  \node[nodestyle] (012) at (-2.85, 3.9) {$012$};
  \node[right of=012,xshift=-.1cm] {$0$};
  \node[nodestyle,fill=orange] (013) at (-0.95, 3.9) {$013$};
  \node[right of=013,xshift=-.1cm] {$1$};
  \node[nodestyle] (023) at (0.95, 3.9) {$023$};
  \node[right of=023,xshift=-.1cm] {$0$};
  \node[nodestyle,fill=orange] (123) at (2.85, 3.9) {$123$};
  \node[right of=123,xshift=-.1cm] {$1$};
  \node[nodestyle] (0123) at (0.0, 5.2) {$0123$};
  \node[right of=0123,xshift=-.1cm] {$0$};

 ===== EDGES ====

\draw[black,thick,dashed] (emptyset) -- (0);
\draw[black,thick,dashed] (emptyset) -- (1);
\draw[black,thick,dashed] (emptyset) -- (2);
\draw[black,thick,dashed] (emptyset) -- (3);
\draw[black,thick,dashed] (0) -- (01);
\draw[black,thick,dashed] (0) -- (02);
\draw[black,thick,dashed] (0) -- (03);
\draw[black,thick,dashed] (1) -- (01);
\draw[black,thick,dashed] (1) -- (12);
\draw[black,thick,dashed] (1) -- (13);
\draw[black,thick,dashed] (2) -- (02);
\draw[black,thick,dashed] (2) -- (12);
\draw[black,thick,dashed] (2) -- (23);
\draw[black,thick,dashed] (3) -- (03);
\draw[black,thick,dashed] (3) -- (13);
\draw[black,thick,dashed] (3) -- (23);
\draw[black,thick,dashed] (01) -- (012);
\draw[black,thick,dashed] (01) -- (013);
\draw[black,thick,dashed] (02) -- (012);
\draw[black,thick,dashed] (02) -- (023);
\draw[black,thick,dashed] (03) -- (013);
\draw[black,thick,dashed] (03) -- (023);
\draw[black,thick,dashed] (12) -- (012);
\draw[black,thick,dashed] (12) -- (123);
\draw[black,thick,dashed] (13) -- (013);
\draw[black,thick,dashed] (13) -- (123);
\draw[black,thick,dashed] (23) -- (023);
\draw[black,thick,dashed] (23) -- (123);
\draw[black,thick,dashed] (012) -- (0123);
\draw[black,thick,dashed] (013) -- (0123);
\draw[black,thick,dashed] (023) -- (0123);
\draw[black,thick,dashed] (123) -- (0123);
\end{tikzpicture}
\caption{Visual representation of the configuration~$\calC$ from
Example~\ref{expl:0111001001110010} and of its associated Möbius
  function~$\muhat_{\bbmB_S,\calC}$ beside each node. Here, $\calC$ consists of all the
colored nodes.}
\label{fig:0111001001110010}
\end{center}
\end{figure}

Next, we define the \emph{non-cancelling principal downsets} of such a
configuration.

\begin{definition}
\label{def:nonzeros}
Let~$S$ finite and~$\calC$ a configuration of~$B_S$. The set of
\emph{non-cancelling principal downsets of~$\bbmB_S$ with respect to~$\calC$},
denoted~$\ncpd_{\bbmB_S}(\calC)$, is defined by
\[\ncpd_{\bbmB_S}(\calC) \defeq \{\frakI_{\bbmB_S}(X) \mid X \in \bbmB_s,\  \muhat_{\bbmB_S,\calC}(X) \neq 0\}.\]
(Recall that we write $\frakI_{\bbmB_S}(X)$ for $\frakI_{\bbmB_S}(\{X\})$.)
\end{definition}

  Note that the elements of $\ncpd_{\bbmB_S}(\calC)$ are subsets of~$\bbmB_S$. The notation might appear heavy at first sight, e.g., using subscripts
  in $\muhat_{\bbmB_S,\calC}$ and $\ncpd_{\bbmB_S}$ to always recall which
  Boolean lattice we are considering. This will however help to avoid confusion
  in later proofs, when multiple Boolean lattices and configurations will be
  involved.

  We are now ready to state the alternative formulation, which we call for convenience the~\emph{$\ncpd$ conjecture}.

\begin{conjecture}[$\ncpd$ conjecture]
\label{conj:ncpd}
For every finite~$S$, for every configuration~$\calI$ of~$\bbmB_{S}$ that is a
downset of~$\bbmB_{S}$, we have that~$\calI \in \bullet(\ncpd_{\bbmB_{S}}(\calI))$.
\end{conjecture}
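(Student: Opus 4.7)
The plan is to first establish that this Boolean-lattice formulation is equivalent to the tight version of the $\nci$ conjecture (Conjecture~\ref{conj:tight}), and then to try to leverage the more rigid Boolean structure where standard Möbius values $(-1)^{|Y|-|X|}$ and Proposition~\ref{prp:inv_bool} are available.

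For the direction from $\nci$ to $\ncpd$, given a downset $\calI \subseteq \bbmB_S$, I would build a tight intersection lattice $L$ whose elements below $\onehat$ correspond bijectively to the elements of $\calI$, ordered by reverse inclusion. Concretely, introduce a fresh element $t_X$ for each $X \in \calI$, set $I_X \defeq \{t_Y : Y \in \calI,\ X \subseteq Y\}$, and choose a sub-family $\calF$ among the $I_X$ so that every $I_X$ arises as a set intersection from $\calF$. Then $X \subseteq Y$ in $\calI$ iff $I_Y \subseteq I_X$ in $L$, the lattice is tight because $t_X$ is the unique element of $\onehat_L$ with $\min_L(t_X) = I_X$, and a top-down induction shows that $\muhat_{\bbmB_S,\calI}(X) = \mu_L(I_X,\onehat_L)$ for each $X \in \calI$, since both satisfy the same defining recurrence distinguishing ``$X \in \calI$'' from ``$X \notin \calI$''. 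The reverse direction runs essentially the same construction backwards, so that a witnessing tree on one side translates into one on the other by swapping $\frakI_{\bbmB_S}(X)$ for $I_X$ throughout, in the spirit of Lemma~\ref{lem:from-full}.

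Having reduced to a familiar setting, I would attempt the actual conjecture by induction on $|\calI|$. The natural move is to pick a maximal element $X \in \calI$ with $\muhat_{\bbmB_S,\calI}(X) \neq 0$ and try to write $\calI$ either as $[\calI \setminus \frakI_{\bbmB_S}(X)] \cupdot \frakI_{\bbmB_S}(X)$ or as $\calI' \minusdot \frakI_{\bbmB_S}(X)$ for some smaller downset $\calI'$, then invoke the inductive hypothesis on the residual configuration. The technical subtlety is that removing $X$ from $\calI$ shifts every $\muhat$-value strictly below $X$, so the non-cancelling set for the residual configuration is not simply the original one minus $\{X\}$, and one must show that any new non-cancelling principal downsets needed in the sub-expression can still be built from the original $\ncpd_{\bbmB_S}(\calI)$.

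The main obstacle, which the paper flags explicitly, is controlling this cascade when several cancellations occur: Theorem~\ref{thm:partial} handles only the case of a single zero of $\muhat$, and multiple zeros can reinforce one another so that no local cutting move leaves a residual configuration with a compatible Möbius profile. A promising angle is to look for an invariant on the Boolean lattice—for instance an Euler-characteristic-style quantity $\sum_{X \in \calI} \muhat_{\bbmB_S,\calI}(X)$, or a decomposition of $\calI$ into ``cancellation-free'' pieces—that is preserved under the inductive step and guarantees the existence of a usable cut. The conjecture is likely to stand or fall on whether such a structural invariant can be found.
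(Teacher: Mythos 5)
The statement you are asked to prove is a \emph{conjecture}: the paper does not prove it, and neither do you. What the paper actually establishes is (i) that the $\ncpd$ conjecture is equivalent to the $\nci$ conjecture (Section~\ref{sec:alt}, via Lemma~\ref{lem:embeds} together with Lemmas~\ref{lem:to-tight} and~\ref{lem:from-full}), and (ii) a partial result, Theorem~\ref{thm:partial}, which settles only the case where a single non-trivial zero must be avoided. The first half of your proposal essentially re-derives the equivalence (i), and your construction of a tight lattice from a downset is in the same spirit as Remark~\ref{rmk:lattices-are-intersection} and Lemma~\ref{lem:embeds}; one small discrepancy is the sign in your claimed identity --- the paper's correspondence is $\muhat_{\bbmB_S,\calI}(Y) = -\mu_L(\,\cdot\,,\onehat_L)$, not $+\mu_L$, though this does not affect which values are non-zero and hence does not affect $\ncpd$ versus $\nci$. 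This part is fine but only restates an equivalence between two open problems.

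The genuine gap is in the second half: the induction on $|\calI|$ by cutting off a maximal non-cancelling element is not carried through, and you yourself identify exactly why it fails --- removing $X$ perturbs $\muhat$ everywhere below $X$, so the residual configuration's non-cancelling principal downsets need not be available from $\ncpd_{\bbmB_S}(\calI)$, and no invariant is exhibited that controls this cascade. This is precisely the obstruction that limits the paper to the single-zero case, and note that the paper's proof of Theorem~\ref{thm:partial} does \emph{not} proceed by this local cutting induction at all: it goes through the Euler characteristic (Proposition~\ref{prp:mob-eul}), the adjacent-pair rewriting machinery (Proposition~\ref{prp:eul-equiv}, Lemmas~\ref{lem:chaining} and~\ref{lem:fetch}), the lifting lemma (Lemma~\ref{lem:lifting}), and the simulation of adjacent pairs by principal downsets (Lemma~\ref{lem:pairs-to-PIs}). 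Your closing suggestion to look at $\sum_{X\in\calI}\muhat_{\bbmm{B}_S,\calI}(X)$-style invariants is reasonable but unexecuted; as written, the proposal proves nothing beyond what is already known to be equivalent to the open conjecture.
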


We point out that, if we do not ask the configuration to be a
downset of~$\bbmB_S$ then the statement is false, i.e., there exists
a finite~$S$ and configuration~$\calC$ of~$\bbmB_S$ such
that~$\calC \notin \bullet(\ncpd_{\bbmB_S}(\calC))$: see
Section~\ref{sec:final}.

We prove in the remaining of this section that this conjecture is
equivalent to the~$\nci$ conjecture. We start with a general-purpose lemma
that gives a correspondence between both formulations:

\begin{lemmarep}
  \label{lem:embeds}
  Let~$S$ be a finite set and let~$\calI$ be a downset of~$\bbmB_S$ of the form~$\calI =
  {\frakI_{\bbmB_S}(\calF)}$ for~$\calF$ a non-trivial set family.
  For~$X\in \calF$ let~$X' \defeq \frakI_{\bbmB_S}(X)$ (which is $2^X$),
  and let~$\calF' \defeq \{X' \mid X \in \calF\}$. Note that 
  $\calF'$ is non-trivial as well, and let then~$L$ be its intersection
  lattice. 
Then we have~$\ncpd_{\bbmB_S}(\calI) = \nci(L)$.
\end{lemmarep}
\begin{proof}
  Observe that~$\onehat_L = \calI$. For~$\calT \subseteq \calF$, we
  write~$\calT'$ for the corresponding subset of~$\calF'$,
  i.e.,~$\calT' = \{X' \mid X\in \calT\}$, and vice versa. For
  $\calT \subseteq \calF, \calT \neq \emptyset$ let~$X_\calT \defeq
  \bigcap \calT$. We show that the following hold:
  \begin{enumerate}
    \item \label{firstitem} For~$\calT'\subseteq \calF', \calT'\neq
      \emptyset$ we have~$S_{\calT'} = {\frakI_{B_S}(X_\calT)}$.
    \item \label{seconditem} Let~$L'$ be the lattice~$(\{X_\calT \mid
      \calT\subseteq \calF, \calT \neq \emptyset\} \cup \{\onehat_{L'}\},
      \leq_{L'})$ where~$\onehat_{L'}$ is a fresh element
      and~$Y\leq_{L'} \onehat_{L'}$ for all $Y\in L'$ and~$X_{\calT_1} \leq_{L'} X_{\calT_2}$
      for~$\calT_1,\calT_2 \subseteq \calF$ if~$X_{\calT_1} \subseteq X_{\calT_2}$. Then~$L
      \simeq L'$ via the isomorphism that sends~$\onehat_L$
      to~$\onehat_{L'}$ and~$S_{\calT'}$ for~$\calT' \subseteq \calF'$, $\calT' \neq \emptyset$ to~$X_T$.
    \item \label{thirditem} For every~$Y\in \bbmB_S$, if~$Y$ is of the form~$X_\calT$ 
      then~$\muhat_{\bbmB_S,\calI}(Y) = - \mu_L(S_{\calT'},\onehat_L)$, otherwise $\muhat_{\bbmB_S,\calI}(Y) = 0$.
  \end{enumerate}

  Notice that Item (\ref{thirditem}) together with Item (\ref{firstitem})
  implies that~$\ncpd_{\bbmB_S}(\calI) = \nci(L)$, which is what we need to
  prove.
  Item (\ref{firstitem}) follows from the chain of trivialities~$S_{\calT'}
  \defeq \bigcap \calT' \defeq \bigcap_{X'\in \calT'} X' = \bigcap_{X\in \calT}
  2^X = 2^{\bigcap \calT} \defeq 2^{X_\calT} \defeq
  {\frakI_{B_S}(X_\calT)}$, and Item (\ref{seconditem}) is easy to
  prove using Item (\ref{firstitem}). 
  For Item (\ref{thirditem}), we show
  that~$\muhat_{\bbmB_S,\calI}$ is as claimed. For~$Y\in \bbmB_S$,
  $Y\notin \calI$, it is clear 
  that~$\muhat_{\bbmB_S,\calI}(Y) = 0$, by definition of $\muhat_{\bbmB_S,\calI}$ and because~$\calI$ is a downset, so we
  can focus on~$Y\in \calI$. We show by top-down induction on~$\calI$ that
  $\muhat_{\bbmB_S,\calI}$ is as claimed on~$\calI$. The base case is
  when~$Y$ is a maximal element of~$\calI$, i.e.,~$Y=X$ for
  some maximal~$X\in \calF$. Then we have~$\muhat_{\bbmB_S,\calI}(Y) = 1 = -
  \mu_L(X')$ indeed.
  The inductive case is when~$Y$ is not a maximal element of~$\calI$.
  Let~$\calT_Y \defeq \{X\in \calF \mid Y \subseteq X\}$, which is non-empty. Notice
  the following fact ($\dagger$): we have $Y \subseteq X_{\calT_Y}$, and more specifically we
  have~$Y\subseteq X_\calT$ for~$\calT\subseteq \calF, \calT\neq \emptyset$ if
  and only if~$\calT \subseteq \calT_Y$. We then distinguish two cases.
  \begin{itemize}
    \item The first case is when~$Y=X_{\calT_Y}$. We
      have $\muhat_{\bbmB_S,\calI}(Y) = 1 - \sum_{Y \subsetneq Y'}
      \muhat_{\bbmB_S,\calI}(Y')$ by definition. Now by what precedes, if $Y'
      \notin \calI$ then $\muhat_{\bbmB_S,\calI}(Y') = 0$, and by
      induction hypothesis if $Y'$ is not of the form $X_\calT$
      then $\muhat_{\bbmB_S,\calI}(Y') = 0$. Hence, we have
      $\muhat_{\bbmB_S,\calI}(Y) = 1 - \sum_{\substack{Y \subsetneq
      Y'\\ Y' = X_{\calT}}} \muhat_{\bbmB_S,\calI}(X_{\calT})$, and
      the suitable $\calT$ are the subsets of~$\calT_Y$ by
      ($\dagger$), i.e., $\muhat_{\bbmB_S,\calI}(Y) = 1 - \sum_{Y'
      \in \{X_{\calT} \mid \calT \subseteq \calT_Y\} \setminus
    \{Y\}} \muhat_{\bbmB_S,\calI}(X_{\calT})$. Now, by induction
    hypothesis, we have $\muhat_{\bbmB_S,\calI}(X_{\calT}) = -
    \mu_L(S_{\calT'},\onehat_L)$ for all $\calT \subseteq \calT_Y$
    such that~$X_{\calT} \neq Y$, therefore
    $\muhat_{\bbmB_S,\calI}(Y) = 1 + \sum_{Y' \in \{X_{\calT} \mid
    \calT \subseteq \calT_Y\} \setminus \{Y\}}
    \mu_L(S_{\calT'},\onehat_L)$. Now by Item (\ref{seconditem})
    this is equal to $1 + \sum_{\substack{U \in L\\U\neq \onehat_L
    \\ S_{\calT_Y'} \subsetneq U}} \mu_L(U,\onehat_L)$, which is
    equal to $\sum_{\substack{U \in L\\ S_{\calT_Y'} \subsetneq U}}
    \mu_L(U,\onehat_L)$, which is $-\mu_L(S_{\calT_Y'})$ by
    definition. So indeed $\muhat_{\bbmB_S,\calI}(Y) = -
    \mu_L(S_{\calT_Y'})$.

    \item The second case is when~$Y \neq X_{\calT_Y}$. Then, the
      nodes of $\calI$ in the principal upset of~$Y$ minus~$\{Y\}$ consists of
      the nodes of~$\calI$ in the principal upset of
      $X_{\calT_Y}$, and of nodes that are not of the form
      $X_\calT$ by ($\dagger$) and that are strict supersets of~$Y$. By induction hypothesis, the
      $\muhat_{\bbmB_S,\calI}$-value of the latter is zero. So
      $\muhat_{\bbmB_S,\calI}(Y) = 1 - \sum_{X_{\calT_Y} \subseteq
      Y'} \muhat_{\bbmB_S,\calI}(Y') = 1 -
      \muhat_{\bbmB_S,\calI}(X_{\calT_Y}) - \sum_{X_{\calT_Y}
        \subsetneq Y'} \muhat_{\bbmB_S,\calI}(Y')$, but by
        definition of $\muhat_{\bbmB_S,\calI}$ we have
        $\muhat_{\bbmB_S,\calI}(X_{\calT_Y}) = 1 -
        \sum_{X_{\calT_Y} \subsetneq Y'}
        \muhat_{\bbmB_S,\calI}(Y')$, so indeed
        $\muhat_{\bbmB_S,\calI}(Y) = 0$. This concludes the proof.\qedhere
  \end{itemize}
\end{proof}

The proof of this lemma is tedious and we defer it to
Appendix~\ref{apx:alt}. We then show each direction of the
equivalence in turn.

\subparagraph*{$\nci$ conjecture (formulation I) $\implies$ $\ncpd$ conjecture.}
Assume Conjecture~\ref{conj:general} to be true, let~$\calI$ be a
configuration of~$\bbmB_{S}$ for some finite set~$S$, assume that $\calI$ is a downset, and let us show that~$\calI \in
\bullet(\ncpd_{\bbmB_{S}}(\calI))$. If~$\calI$ is empty this is clear because then we have 
$\bullet(\ncpd_{\bbmB_{S}}(\calI)) = \bullet(\emptyset) = \{\emptyset\}$.
If~$\calI$ is a principal downset
of~$\bbmB_S$ this is also clear, because in that case we have
that $\muhat_{\bbmB_S,\calI}(X)$ for~$X\in \bbmB_S$ equals~$1$ if~$\calI = \frakI_{\bbmB_S}(X)$ and~$0$ otherwise,
so assume that~$\calI$ is not empty and is not a principal
downset. Then~$\calI$ is of the form $\frakI_{\bbmB_S}(\calF)$ for some non-trivial set family~$\calF$.  
Let then~$L$ be the intersection lattice constructed from~$S$ and~$\calF$
as in Lemma~\ref{lem:embeds}. By this Lemma we have
that $\ncpd_{\bbmB_S}(\calI) = \nci(L)$. By the hypothesis of
Conjecture~\ref{conj:general} being true we have~$\onehat_L \in
\bullet(\nci(L))$, but~$\onehat_L$ is~$\calI$ so indeed $\calI \in
\bullet(\ncpd_{\bbmB_{S}}(\calI))$.\\

\subparagraph*{$\ncpd$ conjecture $\implies$ $\nci$ conjecture (formulation II).}
Assume now that Conjecture~\ref{conj:ncpd} is true, let~$\calF$ be a
non-trivial set family 
such that the intersection lattice~$L = \bbmL_\calF$ is tight, 
and let us show that~$\onehat_L \in \bullet(\nci(L))$. For~$X\in \calF$ let~$X'
\defeq 2^X$, and let~$\calF'  \defeq \{X' \mid X \in \calF\}$. Note that~$\calF'$ is non-trivial as well, so let us consider~$L'=\bbmL_{\calF'}$.
For~$\calT \subseteq \calF$, let~$\calT'$ be the corresponding subset of~$\calF'$, i.e.,~$\calT' = \{X' \mid X \in \calT\}$, and vice versa.
Then observe that
for~$\calT\subseteq \calF, \calT \neq \emptyset$ we have~$S_{\calT'} =
2^{S_\calT}$, so that~$L\simeq L'$.
Moreover we have that~$L'$ is full: indeed it is clear that
for~$\calT\subseteq \calF, \calT\neq \emptyset$ we have $S_\calT
\in \{X \in \onehat_{L'} \mid \min_{L'}(X) =
S_{\calT'}\}$.\footnote{But note that~$L'$ is never tight: indeed
we have, e.g., $\{X \in \onehat_{L'} \mid \min_{L'}(X) =
\zerohat_{L'}\} = \{\zerohat_L, \emptyset\}$.}
Define now~$S= \onehat_L$ (which is finite because~$L$ is tight), and let~$\calI$ be the downset of~$\bbmB_S$ that is
$\frakI_{\bbmB_S}(\calF)$. Observe then that~$L'$ is exactly the
intersection lattice that we would obtain from the construction described in
Lemma~\ref{lem:embeds} applied on~$S$ and~$\calF$, so that~$\ncpd_{\bbmB_S}(\calI) =
\nci(L')$ by that lemma. Since Conjecture~\ref{conj:ncpd} is true by hypothesis we have~$\calI
\in \bullet(\ncpd_{\bbmB_S}(\calI))$, hence~$\calI \in \bullet(\nci(L'))$. But $\calI =
\{Y\in \bbmB_S \mid \exists X \in \calF,~Y\subseteq X\}$ by definition, which
is equal to $\bigcup \calF'$, which
is~$\onehat_{L'}$ by definition. Therefore~$\onehat_{L'} \in
\bullet(\nci(L'))$. 
This implies~$\onehat_L \in \bullet(\nci(L))$ as well by
Lemma~\ref{lem:from-full}.

\subsection{Useful facts}
\label{subsec:useful}
We now present useful facts about the $\ncpd$ formulation of the
conjecture that motivate its introduction.

First, we observe that the generalized Möbius functions are linear (in their parameters)
under disjoint union and subset complement.

\begin{fact}
\label{fact:gen-mob-linear}
    Let~$S$ finite and~$\calC_1,\calC_2$ be two configurations of~$\bbmB_S$ such
that~$\calC_1 \cupdot \calC_2$ (resp., $\calC_1 \minusdot \calC_2$) is well defined. Then
$\muhat_{\bbmB_S,\calC_1 \cupdot \calC_2} = \muhat_{\bbmB_S,\calC_1 } + \muhat_{\bbmB_S,\calC_2}$ (resp.,
$\muhat_{\bbmB_S,\calC_1 \minusdot \calC_2} = \muhat_{\bbmB_S,\calC_1} - \muhat_{\bbmB_S,\calC_2}$).
\end{fact}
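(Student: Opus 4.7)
The plan is to first rewrite the piecewise definition of $\muhat_{\bbmB_S,\calC}$ in a uniform way, so that the additivity becomes a matter of linearity of each ingredient. Concretely, observe that for any configuration $\calC$ and any $X \in \bbmB_S$, Definition~\ref{def:muhat} can be written as
\[
\muhat_{\bbmB_S,\calC}(X) = \mathbbm{1}[X \in \calC] - \sum_{X \subsetneq X'} \muhat_{\bbmB_S,\calC}(X').
\]

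Next, I would check the easy linearity of the indicator term. When $\calC_1 \cupdot \calC_2$ is defined (i.e., $\calC_1 \cap \calC_2 = \emptyset$), for every $X \in \bbmB_S$ we have $\mathbbm{1}[X \in \calC_1 \cupdot \calC_2] = \mathbbm{1}[X \in \calC_1] + \mathbbm{1}[X \in \calC_2]$, since at most one of the two indicators on the right is nonzero. Similarly, when $\calC_1 \minusdot \calC_2$ is defined (i.e., $\calC_2 \subseteq \calC_1$), we have $\mathbbm{1}[X \in \calC_1 \minusdot \calC_2] = \mathbbm{1}[X \in \calC_1] - \mathbbm{1}[X \in \calC_2]$.

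Then I would do a top-down induction on $\bbmB_S$ (induction on the elements $X$ ordered by reverse inclusion), proving the two claimed equalities simultaneously. The base case is $X = S$: there the sum over $X \subsetneq X'$ is empty, so $\muhat_{\bbmB_S,\calC}(S) = \mathbbm{1}[S \in \calC]$, and the additivity follows directly from the indicator identity above. For the inductive case at some $X \subsetneq S$, the induction hypothesis gives $\muhat_{\bbmB_S,\calC_1 \cupdot \calC_2}(X') = \muhat_{\bbmB_S,\calC_1}(X') + \muhat_{\bbmB_S,\calC_2}(X')$ for every $X'$ with $X \subsetneq X'$. Substituting into the uniform formula and using the indicator identity, the sum and the indicator split in exactly the right way to yield $\muhat_{\bbmB_S,\calC_1 \cupdot \calC_2}(X) = \muhat_{\bbmB_S,\calC_1}(X) + \muhat_{\bbmB_S,\calC_2}(X)$. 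The subset-complement case is proved identically, replacing $+$ by $-$ throughout.

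There is no real obstacle here: the proof is a mechanical consequence of rewriting the definition in a single-line form and of the linearity of the indicator. The only thing to be careful about is that $\calC_1 \cupdot \calC_2$ (resp.\ $\calC_1 \minusdot \calC_2$) being well-defined is exactly what ensures the indicator identity holds pointwise, which is why the hypothesis is needed.
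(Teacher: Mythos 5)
Your proof is correct and matches the paper's own argument, which is exactly the same top-down induction on $\bbmB_S$; you have merely made explicit the uniform indicator-function rewriting of Definition~\ref{def:muhat} that the paper leaves implicit in its one-line ``one can simply check by top-down induction.'' No gaps.
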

\begin{proof}
    One can simply check by top-down induction that for all $X\in \bbmB_S$ we
have $\muhat_{\bbmB_S,\calC_1 \cupdot \calC_2}(X) = \muhat_{\bbmB_S,\calC_1}(X) +
\muhat_{\bbmB_S,\calC_2}(X)$ (resp., $\muhat_{\bbmB_S,\calC_1 \minusdot \calC_2}(X) =
\muhat_{\bbmB_S,\calC_1}(X) - \muhat_{\bbmB_S,\calC_2}(X)$).
\end{proof}

Second, we show that, if we allow any principal downset in the dot algebra, then we can
build any configuration: this is the analogue of
Fact~\ref{fact:annoyingfact}. In fact we will use the following
stronger result:\footnote{In this lemma and its proof we
correctly use the notation $\frakI_{\bbmB_S}(\{X\})$ for~$X\in
\bbmB_S$ instead of $\frakI_{\bbmB_S}(X)$ to avoid confusion.}

\begin{lemma}
\label{lem:allreach}
Let~$\calC$ be a configuration of~$\bbmB_S$ (for $S$ finite), and let
  \[A_\calC \defeq \{\frakI_{\bbmB_S}(\{X\}) \mid X\in
  {\frakI_{\bbmB_S}(\calC)} \}.\]
Then we have~$\calC \in \bullet(A_\calC)$.
\end{lemma}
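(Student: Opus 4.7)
The plan is to reduce the statement to the auxiliary claim that, for every $X \in \frakI_{\bbmB_S}(\calC)$, the singleton configuration $\{X\}$ belongs to $\bullet(A_\calC)$. Granting this claim, the lemma will follow immediately: if $\calC = \emptyset$ then $\emptyset \in \bullet(A_\calC)$ by the first base case of Definition~\ref{def:dotalgebra}, and otherwise the singletons $\{X\}$ for distinct $X \in \calC$ are pairwise disjoint as subsets of $\bbmB_S$, so $\calC = \bigcupdot_{X \in \calC} \{X\}$ would exhibit $\calC$ as a valid disjoint union of elements of $\bullet(A_\calC)$.

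To establish the auxiliary claim I would proceed by induction on $|X|$. In the base case $X = \emptyset$, observe that $\{X\} = 2^{\emptyset} = \frakI_{\bbmB_S}(\{\emptyset\})$; assuming $\calC \neq \emptyset$ (the only case that requires work), we have $\emptyset \in \frakI_{\bbmB_S}(\calC)$, so this principal downset lies in $A_\calC$. For the inductive step with $X \neq \emptyset$, every $X' \subsetneq X$ lies in $\frakI_{\bbmB_S}(\calC)$ since downsets are closed under taking subsets, and satisfies $|X'| < |X|$, so by induction $\{X'\} \in \bullet(A_\calC)$. The singletons $\{X'\}$ for distinct $X' \subsetneq X$ are pairwise disjoint in $\bbmB_S$, hence the disjoint union $\bigcupdot_{X' \subsetneq X} \{X'\}$ is well defined, belongs to $\bullet(A_\calC)$, and equals $2^X \setminus \{X\}$. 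Finally, $2^X = \frakI_{\bbmB_S}(\{X\}) \in A_\calC$ and $2^X \setminus \{X\} \subseteq 2^X$, so the subset complement $\{X\} = 2^X \minusdot (2^X \setminus \{X\})$ is well defined and also belongs to $\bullet(A_\calC)$, closing the induction.

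I do not anticipate any real obstacle: the argument is a routine \emph{peel off proper subsets} construction, and the side conditions for $\cupdot$ and $\minusdot$ to be defined fall out immediately from the pairwise disjointness of distinct singletons in $\bbmB_S$ and from the fact that $2^X$ already contains every proper subset of $X$. The only point requiring a little care is handling $X = \emptyset$ as the base of the induction, so that no empty disjoint union has to be invoked in the inductive step; this is why I split off the trivial case $\calC = \emptyset$ at the very start.
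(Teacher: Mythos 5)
Your proof is correct and is essentially the same argument as the paper's: both hinge on obtaining the singleton $\{X\}$ as $\frakI_{\bbmB_S}(\{X\}) \minusdot (\frakI_{\bbmB_S}(\{X\}) \setminus \{X\})$, where the complemented part is built recursively, and then assembling $\calC$ by disjoint union. The only difference is bookkeeping — the paper runs a single induction on $|\frakI_{\bbmB_S}(\calC)|$ peeling off a maximal element of~$\calC$, whereas you first isolate the claim that every singleton lies in $\bullet(A_\calC)$ by induction on $|X|$ — and your handling of the empty-set base case and of the well-definedness side conditions is sound.
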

\begin{proof}
By induction on the size of~$\frakI_{\bbmB_S}(\calC)$.
 The base case is when~$|\frakI_{\bbmB_S}(\calC)| = 0$, which
 can only happen when
 $\calC$ is $\emptyset$.
  Then clearly $\calC \in \bullet(A_\calC)$ as $A_\calC = \emptyset$ and $\bullet(\emptyset) = \{\emptyset\}$.

For the inductive case, assume the
claim is true for all~${\calC'}$ with~$|\frakI_{\bbmB_S}({\calC'})| \leq n$, and let us
show it is true for~$\calC$ with $|\frakI_{\bbmB_S}(\calC)| = n+1$. Let~$X \in
\calC$ be maximal, and consider~${\calC'} \defeq \calC \setminus
\{X\}$.  Then $|\frakI_{\bbmB_S}({\calC'})| \leq n$, hence by induction hypothesis 
we have $\calC' \in \bullet(A_{\calC'})$.
But it is clear that~$A_{{\calC'}} \subseteq A_\calC$, so that
$\calC' \in \bullet(A_{\calC})$ as well.
Moreover,
  consider~$\calC_1 \defeq {\frakI_{\bbmB_S}(\{X\}) \setminus \{X\}}$. By induction hypothesis and for the same reasons 
we have~$\calC_1 \in \bullet(A_{\calC})$.
Then~$\calC' \cupdot (\frakI_{\bbmB_S}(\{X\}) \minusdot \calC_1)$ is a valid expression
witnessing that~$\calC \in \bullet(A_{\calC})$.
\end{proof}

Third, we show that all instances of the~$\ncpd$ conjecture are
already “full”, in the sense that an analogous version of
Proposition~\ref{prp:full-multiplicity} automatically holds for them
(whereas we saw that Proposition~\ref{prp:full-multiplicity} might
be false for intersection lattices that are not full). 
 Let~$\pd(\bbmB_S)$ be the set of principal downsets of~$\bbmB_S$, i.e.,~$\pd(\bbmB_S) \defeq \{ \frakI_{\bbmB_S}(X) \mid X \in \bbmB_S\}$. We show:

\begin{proposition}
  \label{prp:ncpdfull}
   Let~$S$ finite and~$\calI$ be a configuration of~$\bbmB_S$ that
   is a downset and~$T$ a tree witnessing that $\calI \in \bullet(\pd(\bbmB_S))$. Then we have $\mult_T(\frakI_{\bbmB_S}(X)) = 
   \muhat_{\bbmB_S,\calI}(X)$ for every~$X \in \bbmB_S$.
\end{proposition}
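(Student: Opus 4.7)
The plan is to prove the claim by structural induction on $T$, establishing the stronger statement that for every node $n$ of $T$, letting $T_n$ denote the subtree of $T$ rooted at $n$ and $\calI_n$ denote the configuration it represents, we have $\mult_{T_n}(\frakI_{\bbmB_S}(X)) = \muhat_{\bbmB_S,\calI_n}(X)$ for every $X \in \bbmB_S$. Applying this at the root of $T$ (where $\calI_n = \calI$) then yields the proposition.

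For the base case, $n$ is a leaf. If $n$ is labeled $\emptyset$, both sides vanish: $\mult_{T_n}$ is identically zero, and a trivial top-down induction on $\bbmB_S$ shows $\muhat_{\bbmB_S,\emptyset} \equiv 0$. If $n$ is labeled $\frakI_{\bbmB_S}(Y)$ for some $Y \in \bbmB_S$, then $T_n$ has a single leaf of polarity $1$, so $\mult_{T_n}(\frakI_{\bbmB_S}(X))$ equals $1$ when $X = Y$ and $0$ otherwise. I would match this by a short top-down induction on $X \in \bbmB_S$ showing $\muhat_{\bbmB_S,\frakI_{\bbmB_S}(Y)}(X) = 1$ if $X = Y$ and $0$ otherwise: for $X = Y$ the recursion evaluates to $1 - 0 = 1$ since every $X' \supsetneq Y$ is outside $\frakI_{\bbmB_S}(Y)$ and has $\muhat = 0$ by the induction hypothesis; for $X \subsetneq Y$ the only strict superset of $X$ with nonzero $\muhat$ is $Y$ itself, giving $1 - 1 = 0$; and for $X \not\subseteq Y$ no strict superset of $X$ lies in $\frakI_{\bbmB_S}(Y)$, yielding $\muhat = 0$.

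For the inductive step, let $n$ be an internal node. If $n$ is a $\cupdot$-node with children $n_1,\ldots,n_k$, then $T_n$ represents $\calI_{n_1} \cupdot \cdots \cupdot \calI_{n_k}$, and iterating Fact~\ref{fact:gen-mob-linear} gives $\muhat_{\bbmB_S,\calI_n} = \sum_i \muhat_{\bbmB_S,\calI_{n_i}}$. On the multiplicity side, the leaves of $T_n$ partition into the leaves of the $T_{n_i}$, and passing through a $\cupdot$-node affects no polarities, so $\mult_{T_n} = \sum_i \mult_{T_{n_i}}$; the induction hypothesis applied to each $T_{n_i}$ then matches the two quantities. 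If $n$ is a $\minusdot$-node with left child $n_1$ and right child $n_2$, then $T_n$ represents $\calI_{n_1} \minusdot \calI_{n_2}$ and Fact~\ref{fact:gen-mob-linear} gives $\muhat_{\bbmB_S,\calI_n} = \muhat_{\bbmB_S,\calI_{n_1}} - \muhat_{\bbmB_S,\calI_{n_2}}$. Leaves of $T_{n_2}$ pick up exactly one extra right-of-$\minusdot$ step on their root-to-leaf path, so all their polarities flip; hence $\mult_{T_n} = \mult_{T_{n_1}} - \mult_{T_{n_2}}$, and the induction hypothesis again concludes.

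There is no serious obstacle: the entire argument is a routine structural induction whose content is the compatibility of both $\muhat$ and $\mult_T$ with the operations $\cupdot$ and $\minusdot$, captured by Fact~\ref{fact:gen-mob-linear} on the one hand and the very definition of polarity on the other. Notably, the hypothesis that $\calI$ is a downset is never used in the inductive argument and could in principle be dropped; it is present only because the surrounding $\ncpd$ conjecture restricts attention to downsets.
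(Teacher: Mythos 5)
Your proof is correct and follows exactly the same route as the paper's (which merely sketches it as a routine bottom-up induction on $T$ using the linearity of the generalized M\"obius functions, i.e., Fact~\ref{fact:gen-mob-linear}); you simply spell out the leaf cases and the polarity bookkeeping that the paper leaves implicit. Your observation that the downset hypothesis is never used is also accurate.
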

\begin{proof}
  It is routine to show by (bottom-up) induction on~$T$ that, for every
  node~$n\in T$, letting~$\calC_n$ be the corresponding
  configuration represented by the subtree of~$T$ rooted at~$n$ we
  have that $\mult_{T_n}(\frakI_{\bbmB_S}(X)) =
  \muhat_{\bbmB_S,\calC_n}(X)$ for every~$X \in \bbmB_S$; this in particular uses the linearity of
  generalized Möbius functions under disjoint unions and subset complements. Applying
  this claim to the root of~$T$ gives the claim.
\end{proof}

Last, we establish a connection between the generalized Möbius
function of a configuration~$\calC$ and the \emph{Euler
characteristic} of~$\calC$, that will be crucial in the next
section.

\begin{definition}
  Let~$\calC$ be a set family of finite sets. We define
the~\emph{Euler characteristic of~$\calC$}, denoted~$\eul(\calC)$,
by $\eul(\calC) \defeq \sum_{X\in \calC} (-1)^{|X|}$.
\end{definition}

\begin{proposition}
  \label{prp:mob-eul}
  Let~$S$ finite and $\calC$ a configuration of~$\bbmB_S$. Then we have $\muhat_{\bbmB_S,\calC}(X) = (-1)^{|X|} \times \eul(\calC \cap \frakF_{\bbmB_S}(X))$ for every~$X\in \bbmB_S$.
\end{proposition}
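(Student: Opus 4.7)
The plan is to recognize the top-down recursion defining $\muhat_{\bbmB_S,\calC}$ as one half of a Möbius inversion pair on the Boolean lattice $\bbmB_S$, and then to invert it using Proposition~\ref{prp:inv_bool} to get a closed-form sum which is manifestly the claimed expression in terms of the Euler characteristic.

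First I would rewrite Definition~\ref{def:muhat} in accumulated form. The two branches of the recursion both say
\[\muhat_{\bbmB_S,\calC}(X) = \mathbbm{1}[X\in\calC] - \sum_{X\subsetneq X'}\muhat_{\bbmB_S,\calC}(X'),\]
which rearranges to
\[\mathbbm{1}[X\in\calC] = \sum_{X\subseteq X'}\muhat_{\bbmB_S,\calC}(X')\]
for every $X\in\bbmB_S$. This is exactly the left-hand equation of Proposition~\ref{prp:inv_bool} taken with $f \defeq \muhat_{\bbmB_S,\calC}$ and $g(X) \defeq \mathbbm{1}[X\in\calC]$.

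Next I would invoke Proposition~\ref{prp:inv_bool} to obtain the equivalent right-hand equation
\[\muhat_{\bbmB_S,\calC}(X) = \sum_{X\subseteq X'}(-1)^{|X'|-|X|}\mathbbm{1}[X'\in\calC] = (-1)^{|X|}\sum_{X'\in\calC,\, X\subseteq X'}(-1)^{|X'|}.\]
Since $\{X'\in\calC\mid X\subseteq X'\} = \calC\cap\frakF_{\bbmB_S}(X)$ by definition of the principal upset, the sum on the right is $\eul(\calC\cap\frakF_{\bbmB_S}(X))$, giving the claimed identity.

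I do not expect a real obstacle here: the only step that needs any care is the rearrangement into the form demanded by Proposition~\ref{prp:inv_bool}, after which the result is immediate. As a self-contained alternative (should one wish to avoid citing the inversion proposition), one could prove the identity directly by top-down induction on $\bbmB_S$, the only nontrivial computation being the collapse of the double sum $\sum_{X\subsetneq X'}\sum_{Y\in\calC,\, X'\subseteq Y}(-1)^{|Y|-|X'|}$ via the identity $\sum_{\emptyset\neq Z\subseteq T}(-1)^{|Z|}=-1$ for $T\neq\emptyset$; but the route via Proposition~\ref{prp:inv_bool} is the cleanest and reuses machinery already established in the paper.
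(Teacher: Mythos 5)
Your proposal is correct and is essentially identical to the paper's own proof: the paper likewise sets $g(X)=1$ if $X\in\calC$ and $0$ otherwise, observes that the defining recursion of $\muhat_{\bbmB_S,\calC}$ rearranges to $g(X)=\sum_{X\subseteq X'}f(X')$, and applies Proposition~\ref{prp:inv_bool} to obtain the alternating sum that is then recognized as $(-1)^{|X|}\,\eul(\calC\cap\frakF_{\bbmB_S}(X))$. No gaps; the only difference is your optional remark about a direct inductive alternative, which the paper does not pursue.
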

\begin{proof}
  Define~$f,g:\bbmB_S \to \ZZ$ by~$f(X) \defeq
  \muhat_{\bbmB_S,\calC}(X)$ and~$g(X) = 1$ if~$X\in \calC$ and~$0$
  otherwise. Observe then that, by definition
  of~$\muhat_{\bbmB_S,\calC}$ we have $g(X) = \sum_{\substack{X'\in
  \bbmB_S \\ X\subseteq X'}} f(X')$ for all $X\in \bbmB_S$.
  Then by Proposition~\ref{prp:inv_bool} we have that
  $f(X) =
    \sum_{\substack{X'\in \bbmB_S \\ X\subseteq X'}}
    (-1)^{|X'|-|X|} g(X')$ for all $X\in \bbmB_S$, i.e.,
    \begin{align*}
      \muhat_{\bbmB_S,\calC}(X) &= \sum_{\substack{X'\in \bbmB_S \\ X\subseteq X'}}
    (-1)^{|X'|-|X|} g(X')\\
      &= \sum_{X'\in {\calC \cap \frakF_{\bbmB_S}(X)} }
    (-1)^{|X'|-|X|}\\
&= (-1)^{|X|} \times \sum_{X'\in \calC \cap \frakF_{\bbmB_S}(X) }
    (-1)^{|X'|}\\
&= (-1)^{|X|} \times \eul(\calC \cap \frakF_{\bbmB_S}(X))
    \end{align*}
    for all~$X\in \bbmB_S$, which is what we wanted to show.
\end{proof}

Connections between the Möbius function and the Euler
characteristic have already been shown, for instance see Philip
Hall's theorem~\cite[Proposition 6 and Theorem
3]{rota1964foundations}. As far as we can tell, however, the
connection from Proposition~\ref{prp:mob-eul} seems new.

\section{Partial result on $\ncpd$: avoiding a given zero}
\label{sec:partial}
In this section we present a partial result on the $\ncpd$ conjecture
intuitively saying that we can “avoid” any given non-trivial zero. We first
define what these are. 

\begin{definition}
  \label{def:ntcz}
  Let~$\calI$ be a downset of~$\bbmB_S$. We define the \emph{non-trivial zeros
  of~$\calI$}, denoted~$\ntz_{\bbmB_S}(\calI)$, by
  \[\ntz_{\bbmB_S}(\calI) \defeq \{Z \in \calI \mid \muhat_{\bbmB_S,\calI}(Z) = 0 \}.\]
    By opposition, the \emph{trivial} zeros of~$\calI$ are the elements~$Z \in
    \bbmB_S \setminus \calI$ such that~$\muhat_{\bbmB_S,\calI}(Z) = 0$ (notice
    that these form an upset of~$\bbmB_S$).
\end{definition}

Observe that if $\calI$ does not have non-trivial zeros, i.e., $\ntz_{\bbmB_S}(\calI) = \emptyset$, then
we have indeed that $\calI \in \bullet(\ncpd_{\bbmB_{S}}(\calI))$, that is, the $\ncpd$ conjecture holds, as can be
seen by taking~$\calC=\calI$ in Lemma~\ref{lem:allreach}.
So we can focus on downsets~$\calI$ having as least one such non-trivial zero.

The partial result that we show in this section is that, for any given such non-trivial zero~$Z$, if we are allowed
to use all the non-trivial principal downsets \emph{except the one generated by~$Z$}, then we can construct~$\calI$.
Formally we will prove the following:

\begin{theorem}
  \label{thm:partial}
  Let~$\calI$ be a downset of~$\bbmB_S$ such that $\ntz_{\bbmB_S}(\calI)$ is not empty, and let~$Z\in \ntz_{\bbmB_S}(\calI)$.
  Then we have~$\calI \in \bullet(\{\downarrow_{\bbmB_S}(X) \mid X\in I\setminus \{Z\}\})$.
\end{theorem}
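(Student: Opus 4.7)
The plan is to proceed by induction on $|\calI|$, using the key decomposition $\calI = \calJ_0 \cupdot \calI^{(Z)}$ where $\calJ_0 \defeq \calI \setminus \frakF_{\bbmB_S}(Z)$ (everything in $\calI$ not above $Z$) and $\calI^{(Z)} \defeq \calI \cap \frakF_{\bbmB_S}(Z)$ (the ``upper part'' above $Z$). Since $\calI$ is a downset and $\frakF_{\bbmB_S}(Z)$ is an upset, $\calJ_0$ is itself a downset, and clearly $Z \notin \calJ_0$. So Lemma~\ref{lem:allreach} applied to $\calJ_0$ gives $\calJ_0 \in \bullet(\{\downarrow(X) : X \in \calJ_0\}) \subseteq \bullet(\{\downarrow(X) : X \in \calI \setminus \{Z\}\})$, and the task reduces to expressing $\calI^{(Z)}$ in the same dot-algebra.

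To handle $\calI^{(Z)}$, I would use the natural poset isomorphism $\phi : \frakF_{\bbmB_S}(Z) \to \bbmB_{S \setminus Z}$ sending $X$ to $X \setminus Z$. Under $\phi$, the configuration $\calI^{(Z)}$ transports to a downset $\hat{\calI}^{(Z)}$ of $\bbmB_{S \setminus Z}$. A direct computation using Proposition~\ref{prp:mob-eul} shows that $\muhat_{\bbmB_{S \setminus Z}, \hat{\calI}^{(Z)}}(\emptyset) = (-1)^{|Z|} \eul(\calI^{(Z)}) \cdot (-1)^{|Z|}= \muhat_{\bbmB_S, \calI}(Z) = 0$, so $\emptyset$ is a non-trivial zero of $\hat{\calI}^{(Z)}$ in the smaller Boolean lattice.

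When $Z \neq \emptyset$, we have $\emptyset \in \calJ_0$, so $\calJ_0 \neq \emptyset$ and hence $|\hat{\calI}^{(Z)}| = |\calI^{(Z)}| < |\calI|$. The inductive hypothesis applied to $(\hat{\calI}^{(Z)}, \emptyset)$ yields an expression for $\hat{\calI}^{(Z)}$ using principal downsets $\downarrow_{\bbmB_{S \setminus Z}}(Y)$ for $Y \in \hat{\calI}^{(Z)} \setminus \{\emptyset\}$. Translating back through $\phi^{-1}$ preserves the dot-algebra structure, and each such principal downset corresponds to the interval $[Z, Y \cup Z]$ in $\bbmB_S$, which we rewrite as $\downarrow(Y \cup Z) \minusdot \bigcup_{z \in Z} \downarrow((Y \cup Z) \setminus \{z\})$. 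Since $Y \neq \emptyset$ we have $Y \cup Z \neq Z$, and each $(Y \cup Z) \setminus \{z\}$ for $z \in Z$ fails to contain $Z$; hence all principal downsets occurring are generated by elements of $\calI \setminus \{Z\}$, and the inner union (a downset) is built by another invocation of Lemma~\ref{lem:allreach}. Composing everything yields the desired expression for $\calI$.

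The main obstacle is the case $Z = \emptyset$: here $\phi$ is the identity and the induction does not reduce $|\calI|$, so a separate direct construction is required. In this case $\calJ_0 = \emptyset$ and the hypothesis is simply $\eul(\calI) = 0$, and the goal is to express $\calI$ using only principal downsets generated by non-empty elements. The approach I would attempt is to process the maximal elements $M_1, \ldots, M_k$ of $\calI$ in a carefully chosen order: designate one distinguished maximal $M_1$ whose $\downarrow(M_1)$ supplies $\emptyset$, and for each subsequent $M_i$ express the sliced piece $\downarrow(M_i) \minusdot \bigcup_{j<i} \downarrow(M_i \cap M_j)$, which is automatically disjoint from the previously built configuration because $\emptyset$ lies in the subtracted part. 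The crux is then to express each inner downset $\bigcup_{j<i} \downarrow(M_i \cap M_j)$ without $\downarrow(\emptyset)$; this requires exploiting the constraint $\eul(\calI) = 0$ to choose the ordering and possibly a ``pivoting'' scheme on the pairwise intersections (reminiscent of a matching argument on the intersection pattern of the $M_i$), and will likely be the most delicate part of the proof.
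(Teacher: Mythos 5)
Your reduction to the case $Z=\emptyset$ is sound and in fact mirrors the paper's own strategy: the paper also works inside $\frakF_{\bbmB_S}(Z)\simeq\bbmB_{S\setminus Z}$, observes via Proposition~\ref{prp:mob-eul} that $\eul(\calI\cap\frakF_{\bbmB_S}(Z))=0$, and handles the part of~$\calI$ outside $\frakF_{\bbmB_S}(Z)$ with Lemma~\ref{lem:allreach} (the paper transports expressions back via the lifting lemma rather than your interval rewriting $[Z,Y\cup Z]=\downarrow_{\bbmB_S}(Y\cup Z)\minusdot\bigcup_{z\in Z}\downarrow_{\bbmB_S}((Y\cup Z)\setminus\{z\})$, but both translations work). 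The problem is that this reduction does not decrease anything of substance: the entire content of the theorem is concentrated in the case you defer, namely expressing a downset $\calI$ with $\emptyset\in\ntz_{\bbmB_S}(\calI)$ (equivalently $\eul(\calI)=0$) using only principal downsets generated by non-empty elements. You explicitly leave this case as a sketch, so the proof is not complete.

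Moreover, the sketch you give for that case would fail as stated. Slicing along maximal elements requires expressing each inner downset $D_i=\bigcup_{j<i}\downarrow_{\bbmB_S}(M_i\cap M_j)$ without $\downarrow_{\bbmB_S}(\emptyset)$, but by Proposition~\ref{prp:ncpdfull} (via Fact~\ref{fact:gen-mob-linear}) the multiplicity of $\downarrow_{\bbmB_S}(\emptyset)$ in \emph{any} witnessing tree for a downset $\calD$ built from principal downsets equals $\muhat_{\bbmB_S,\calD}(\emptyset)=\eul(\calD)$; hence a downset with non-zero Euler characteristic genuinely cannot be expressed without $\downarrow_{\bbmB_S}(\emptyset)$. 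The hypothesis $\eul(\calI)=0$ only forces $\sum_{i\ge 2}\eul(D_i)=0$, not that each $\eul(D_i)$ vanishes, so the pieces of your decomposition are in general individually inexpressible and some global rearrangement is unavoidable. This is precisely what the paper's machinery supplies: Proposition~\ref{prp:eul-equiv} shows that a configuration of zero Euler characteristic inside a connected region can be reduced to $\emptyset$ by adding and removing \emph{adjacent pairs} (via the fetching, erasing and teleporting lemmas), and Lemma~\ref{lem:pairs-to-PIs} shows each adjacent pair $\{X,X\minusdot\{x\}\}$ lies in $\bullet(\{\frakI_{\bbmB_S}(Y)\mid\emptyset\subsetneq Y\subseteq X\})$. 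Your "pivoting scheme on the pairwise intersections" would essentially have to reinvent this; as written, the key idea is missing.
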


The proof reuses and extends some ideas and
notions from~\cite{monet2020solving}. We first define some of these notions
in Section~\ref{subsec:pairs},
in particular the notion of \emph{adjacent
pairs} and certain equivalence classes. In
Section~\ref{subsec:lifting-and-simul} we show what we call the \emph{lifting lemma}, and show adjacent pairs can be
simulated with principal downsets.
We combine everything in Section~\ref{subsec:proof} to prove Theorem~\ref{thm:partial}.

\subsection{Adjacent pairs}
  \label{subsec:pairs}

\begin{definition}
\label{def:pair}
An \emph{adjacent pair} of~$\bbmB_S$ is a configuration~$\calP$ of
the form~$\{X,X\minusdot \{x\}\}$ (with $X \in \bbmB_S$ and~$x\in
X$).
\end{definition}

\begin{definition}
\label{def:pairs_game}
Let~$A$ be a set of adjacent pairs of~$\bbmB_S$, 
  and let~$\calC,\calC'$ be two configurations 
of~$\bbmB_S$.  We then write $\calC \rewr{+ A}{3} \calC'$ when there
exists an adjacent pair~$\calP\in A$ such that
$\calC' = \calC \cupdot \calP$.  
Similarly we write
$\calC \rewr{-A}{3} \calC'$ whenever $\calC' \rewr{+A}{3}
\calC$ (i.e., when there exists~$\calP\in A$ such that~$\calC' = \calC \minusdot \calP$), and 
write $\calC \rewr{$\pm$ A}{3} \calC'$ when
$\calC \rewr{+A}{3} \calC'$ or $\calC \rewr{-A}{3}
\calC'$.  Observe that $\rewr{$\pm A$}{3}$ is symmetric.  We write
$\rewr{$\pm A$}{3}^*$ the reflexive transitive closure of $\rewr{$\pm A$}{3}$,
and write~$\simeq_{A}$ the induced equivalence relation.
\end{definition}

In other words, we have $\calC \simeq_{A} \calC'$ when we can go
from~$\calC$ to~$\calC'$ by iteratively (1) adding an 
adjacent pair from~$A$ to the current configuration if the pair is disjoint from the
configuration; or (2) removing an adjacent pair from~$A$ to the current
configuration if the pair was included in the configuration.

\begin{definition}
  \label{def:bla}
  For a subset~$\calG$ of~$\bbmB_S$, define the set of 
  \emph{allowed adjacent pairs of~$\bbmB_S$ relative to~$\calG$},
  denoted~$\ap_{\bbmB_S}(\calG)$, by
  \[\ap_{\bbmB_S}(\calG) \defeq \{\calP \mid \calP \text { is an adjacent pair of } \bbmB_S \text { and } \calP \subseteq \calG\}.\]
\end{definition}

\begin{definition}
  \label{def:connected}
  For~$S$ finite, define the undirected graph $G_{S}$ with
  vertex set~$2^S$ whose edges are all adjacent pairs of~$\bbmB_S$.
  A subset~$\calG$ of~$\bbmB_S$ is \emph{connected} if it is
  a connected set of nodes in $G_S$.
\end{definition}

The goal of this section is to establish the following proposition:

\begin{proposition}
  \label{prp:eul-equiv}
  Let~$\calG$ be a connected subset of~$\bbmB_S$, and
  $\calC_1, \calC_2$ two configurations that are included in~$\calG$. We have $\eul(\calC_1) = \eul(\calC_2)$ if and only if
  $\calC_1 \simeq_{\ap_{\bbmB_S}(\calG)} \calC_2$.
\end{proposition}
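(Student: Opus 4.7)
\emph{Proof plan.} The forward direction is immediate: any adjacent pair $\calP = \{X, X \minusdot \{x\}\}$ contains one set of even cardinality and one of odd cardinality, so $\eul(\calP) = 0$; hence each rewrite step $\calC \rewr{$\pm A$}{3} \calC'$ (writing $A = \ap_{\bbmB_S}(\calG)$) preserves $\eul$.

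For the backward direction, the first step is to derive a \emph{slide} macro by composing one $+$-rewrite with one $-$-rewrite: whenever $u \in \calC$, $v, w \notin \calC$, and $w$ is a common $\calG$-neighbor of both $u$ and $v$ (which forces $u$ and $v$ to have the same parity, opposite to that of $w$), the composition $\calC \rewr{$+A$}{3} \calC \cupdot \{v, w\} \rewr{$-A$}{3} (\calC \setminus \{u\}) \cup \{v\}$ is legal, so one may replace $u$ by the same-parity vertex $v$. The core of the argument is then the following \emph{reduction lemma}: if $\calC \subseteq \calG$ with $|\calC| > |\eul(\calC)|$, then $\calC \simeq_{A} \calC'$ for some $\calC'$ with $|\calC'| = |\calC| - 2$. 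Indeed, the hypothesis forces both parity classes of $\calC$ to be non-empty, so one can pick an even-sized $v \in \calC$, an odd-sized $w \in \calC$, and a shortest $\calG$-path $v = u_0, u_1, \ldots, u_k = w$: by connectedness of $\calG$ such a path exists, by minimality its interior $u_1, \ldots, u_{k-1}$ is disjoint from $\calC$ (otherwise the path could be truncated), and bipartiteness forces $k$ odd. If $k = 1$ the pair $\{v, w\}$ is removable directly; otherwise one iteratively slides the token from $v$ to $u_2, u_4, \ldots, u_{k-1}$ (each slide legal because the relevant vertices remain outside the current configuration), and then removes the pair $\{u_{k-1}, w\}$.

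Iterating the reduction lemma, every $\calC \subseteq \calG$ is equivalent to a canonical configuration of minimum size $|\eul(\calC)|$ supported entirely in the parity class matching $\mathrm{sign}(\eul(\calC))$ (and to $\emptyset$ if $\eul(\calC) = 0$). To conclude it suffices to show that any two such canonical configurations $\calC^{(1)}, \calC^{(2)}$ with the same Euler characteristic are equivalent. For $\eul = 0$ this is vacuous; for $|\eul| = 1$ it amounts to showing that any two same-parity singletons are equivalent, which follows by iterating the slide macro along a $\calG$-path between them. For $|\eul| \geq 2$ the approach is to swap elements one at a time: picking $v \in \calC^{(1)} \setminus \calC^{(2)}$ and $v' \in \calC^{(2)} \setminus \calC^{(1)}$, first enlarge $\calC^{(1)}$ by adding an adjacent pair $\{v', w\}$ for some $\calG$-neighbor $w$ of $v'$ (legal since $w$ has opposite parity to the elements of $\calC^{(1)}$), and then apply the reduction lemma to the enlarged configuration along a path chosen to cancel precisely $v$ with $w$. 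The \textbf{main obstacle} is ensuring that this last reduction step can be steered to cancel the intended pair $\{v, w\}$ rather than some other even/odd pair of the enlarged configuration; when the natural shortest $\calG$-path from $v$ to $w$ crosses the current configuration, additional preparatory enlargements or slides may be needed to clear a suitable route, using connectedness of $\calG$ to detour.
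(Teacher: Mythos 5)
Your overall strategy matches the paper's: the forward direction is the same parity observation, your slide macro is the paper's ``teleporting'' specialized to paths of length two, and your reduction lemma is the paper's fetching lemma combined with ``erasing''. The proof breaks down, however, exactly where you flag the ``main obstacle'', and that obstacle is not a technicality that routine preparatory moves will dispose of — it is the actual content of the second half of the argument. After both configurations are reduced to pure-parity canonical forms $\calC^{(1)},\calC^{(2)}$ of equal size $\geq 2$, your plan is to adjoin an adjacent pair $\{v',w\}$ to $\calC^{(1)}$ and then erase an even/odd pair from the enlarged configuration. But the enlarged configuration contains exactly one element of the minority parity, namely $w$, and many of the majority parity; your reduction lemma only guarantees that \emph{some} majority-parity element is cancelled against $w$, and if that element happens to lie in $\calC^{(1)} \cap \calC^{(2)}$ the step strictly increases the symmetric difference with $\calC^{(2)}$. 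Nothing in the sketch rules this out or supplies a termination measure, so as written the argument does not close.

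The paper closes precisely this gap by avoiding add-then-erase in this phase and instead moving one element at a time by teleporting, exploiting parity purity: since both canonical configurations consist only of elements of one common parity, every odd-position vertex of a simple $\calG$-path from a point of $\calC''_i \setminus \calC_2'$ to a point of $\calC_2' \setminus \calC''_i$ is automatically unoccupied, and a max/min choice of indices along the path guarantees that the element removed lies in $\calC''_i\setminus\calC_2'$ and the element added lies in $\calC_2'\setminus\calC''_i$; any occupied even-position vertices in between are cleared by a cascade of teleports (last one first, then each preceding one into the slot just vacated). Each such step decreases $|\calC''_i \setminus \calC_2'|$ by one, which gives termination. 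You would need an argument of this kind, or an explicit potential function, to finish your last step. A smaller, fixable slip: in your reduction lemma, a shortest path between a \emph{fixed} even/odd pair $v,w\in\calC$ need not have interior disjoint from $\calC$ (truncating at an interior occupied vertex changes the endpoints, so minimality of that particular path is not contradicted); you must either minimize the path length over all admissible pairs simultaneously, or extract a suitable subpath from an arbitrary path via the paper's max/min index trick.
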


This result already appears in \cite[Proposition
6.1]{monet2020solving}, but only when~$\calG$ is the whole Boolean
lattice, i.e., when\footnote{We will actually only use this result
when~$\calG$ is a downset of~$\bbmB_S$, but we still prove this more
general version for completeness and because it is not much more complicated.} 
we have~$\calG = 2^S$.
Observe that the “if” direction in Proposition~\ref{prp:eul-equiv} is trivial, since adding or removing an adjacent pair (with disjoint union or subset complement) does not modify the Euler characteristic. Hence we need to prove the “only if” direction.
To this end, we reproduce the following lemma from~\cite{monet2020solving}:

\begin{lemma}[Lemma 5.10 of \cite{monet2020solving}]
\label{lem:chaining}
	Let~$\calC$ be a configuration of~$\bbmB_S$, and $X \neq X'$ be two subsets of~$S$ such that there is a simple path~$P$ of the form
	$X = X_0 - \cdots - X_{n+1} = X'$ from $X$ to $X'$
	in~$G_S$ with $n \geq 0$ and $X_i \notin \calC$ for $1 \leq i \leq n$.
	Then we have the following:
	\begin{description}
          \item[\textbf{Erasing.}] If $(-1)^{|X|} \neq (-1)^{|X'|}$ (i.e., $n$ is even) and $\{X,X'\} \subseteq \calC$ then,
			defining~$\calC'$ by $\calC' \defeq \calC \setminus \{X,X'\}$, we have $\calC \rewr{$\pm \ap_{\bbmB_S}(P)$}{8}^* \calC'$.
			We say that we go from~$\calC$ to~$\calC'$ by \emph{erasing~$X$ and~$X'$}.
                      \item[\textbf{Teleporting.}] If $(-1)^{|X|} = (-1)^{|X'|}$ (i.e., $n$ is odd) and $X \in \calC$ and $X' \notin \calC$ then,
			defining~$\calC'$ by $\calC' \defeq (\calC \setminus \{X\}) \cup \{X'\}$, we have $\calC \rewr{$\pm \ap_{\bbmB_S}(P)$}{8}^* \calC'$.
			We say that we go from~$\calC$ to~$\calC'$ by \emph{teleporting~$X$ to~$X'$}.
	\end{description}
\end{lemma}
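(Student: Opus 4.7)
I will prove the two cases of the lemma simultaneously by induction on $n$, the number of interior vertices of the simple path $P$. Because $P$ is simple, the only adjacent pairs contained in $P$ are its edges $\{X_i,X_{i+1}\}$ for $0\leq i\leq n$, so each move in $\ap_{\bbmB_S}(P)$ either inserts or deletes one such edge, depending on whether its two endpoints currently both lie outside, or both lie inside, the configuration. The key idea is that a single application of the Teleporting base case lets us ``slide'' a marked vertex two steps along~$P$, which lets us reduce to a strictly shorter subpath.

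\textbf{Base cases.} For Erasing with $n=0$, the pair $\{X,X'\}$ is itself an edge of~$P$ and is already contained in~$\calC$, so a single $\minusdot$ step yields $\calC\setminus\{X,X'\}=\calC'$. For Teleporting with $n=1$, write the path as $X-X_1-X'$ with $X\in\calC$ and $X_1,X'\notin\calC$. I first perform $\calC\cupdot\{X_1,X'\}$ (legal since $X_1$ and $X'$ are both absent from $\calC$) and then subtract $\{X,X_1\}$ (both present in the resulting configuration). The net effect is $(\calC\setminus\{X\})\cup\{X'\}=\calC'$.

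\textbf{Inductive step.} For Erasing with $n\geq 2$ (even), I apply the $n=1$ Teleporting base case to the initial subpath $X_0-X_1-X_2$: this is legal because $X_0\in\calC$ and $X_1,X_2\notin\calC$, and it produces an intermediate configuration $\calC''=(\calC\setminus\{X_0\})\cup\{X_2\}$. Now $X_2$ and $X_{n+1}$ both lie in $\calC''$ while $X_3,\ldots,X_n$ do not, so the Erasing induction hypothesis applies to the subpath $X_2-X_3-\cdots-X_{n+1}$, which has $n-2$ internal vertices (still even); its application deletes $X_2$ and $X_{n+1}$, reaching $\calC\setminus\{X,X'\}=\calC'$. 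For Teleporting with $n\geq 3$ (odd), the same initial two-step shift sends $X_0$ to $X_2$, and then the Teleporting induction hypothesis applied to the shorter subpath $X_2-\cdots-X_{n+1}$ (with $n-2$ internal vertices, still odd) teleports $X_2$ to $X_{n+1}=X'$, yielding $(\calC\setminus\{X\})\cup\{X'\}=\calC'$.

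\textbf{Main obstacle.} The only delicate point is to verify at each step that the adjacent pair we add is disjoint from the current configuration and that the one we remove is contained in it. This follows by direct inspection from the hypothesis that $X_1,\ldots,X_n\notin\calC$ together with the fact that each shift modifies membership at only two specific vertices of~$P$, leaving the membership status of the remaining $X_i$'s unchanged and thus preserving the inductive hypothesis on the residual subpath. Since every pair used is an edge of~$P$, every move is in $\ap_{\bbmB_S}(P)$, so the concatenated rewriting sequence has the form required by the lemma.
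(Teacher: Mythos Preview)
Your proof is correct and follows essentially the same approach as the paper: both arguments slide the marked endpoint two positions along~$P$ at a time by first adding the edge $\{X_{2j+1},X_{2j+2}\}$ and then removing $\{X_{2j},X_{2j+1}\}$; you phrase this as an induction on~$n$ while the paper writes it out as an explicit loop over~$j$. One minor inaccuracy worth flagging: simplicity of~$P$ does \emph{not} ensure that the only adjacent pairs contained in~$P$ are its edges (non-consecutive $X_i,X_j$ may happen to differ by a single element), but this claim is never actually used, since every move you perform is an edge of~$P$ and hence lies in~$\ap_{\bbmB_S}(P)$ regardless.
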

\begin{proof}
        We only explain the erasing part, as teleporting works
        similarly. Let $n=2i$. For $0 \leq j < i$, do the
        following: add the adjacent pair $\{X_{2j+1}, X_{2j+2}\}$
        and remove the adjacent pair $\{X_{2j},X_{2j+1}\}$.
        Finally, remove the adjacent pair~$\{X_{2i}, X_{2i+1}\}$.
\end{proof}

Teleporting is illustrated in Figure~\ref{fig:teleporting} (adapted from~\cite{monet2020solving}). 

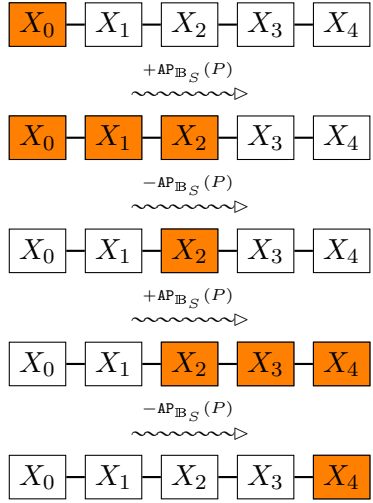
\begin{figure}
\centering
\begin{tikzpicture}	
\tikzset{nodestyle/.style={draw,rectangle}}
\foreach \i [evaluate=\i as \ii using \i*1.5] in {0,1,2,3,4} {
\foreach \j in {0,1,2,3,4} {
	\node[name=n\i\j,nodestyle] at (\j, \ii) {$X_\j$};
		}
	\foreach \k/\l in {0/1,1/2,2/3,3/4} {
			\draw[black, thick] (n\i\k) -- (n\i\l);
			}
			}

\foreach \i [evaluate=\i as \ii using \i*1.5] in {0,1,2,3} {
	\ifodd\i
	\node at (2,\ii+0.75) {$\rewr{$+ \ap_{\bbmB_S}(P)$}{8}$};
	\else
	\node at (2,\ii+0.75) {$\rewr{$- \ap_{\bbmB_S}(P)$}{8}$};
	\fi
	}

	\node[nodestyle,fill=orange] at (0, 4*1.5) {$X_0$};

	\node[nodestyle,fill=orange] at (0, 3*1.5) {$X_0$};
	\node[nodestyle,fill=orange] at (1, 3*1.5) {$X_1$};
	\node[nodestyle,fill=orange] at (2, 3*1.5) {$X_2$};

	\node[nodestyle,fill=orange] at (2, 2*1.5) {$X_2$};

	\node[nodestyle,fill=orange] at (2, 1*1.5) {$X_2$};
	\node[nodestyle,fill=orange] at (3, 1*1.5) {$X_3$};
	\node[nodestyle,fill=orange] at (4, 1*1.5) {$X_4$};

	\node[nodestyle,fill=orange] at (4, 0*1.5) {$X_4$};
\end{tikzpicture}
\caption{Imagine that the path at the top occurs in $G_S$ for some configuration~$\calC$ of~$\bbmB_S$ (in orange), and let~$P = \{X_0,\ldots,X_4\}$.
	The consecutive orange configurations of~$\bbmB_S$ are obtained by single steps of the transformation.
      The total transformation illustrates what is called \emph{teleporting} in Lemma~\ref{lem:chaining}: we go from~$\calC$ to~$\calC'$ in the bottom path by teleporting $X_1$ to~$X_4$.}
\label{fig:teleporting}
\end{figure}

We will also reuse the \emph{fetching lemma} from~\cite{monet2020solving},
extending it to connected subsets of~$\bbmB_S$ (instead of
the full Boolean lattice).
Given the right circumstances, this lemma fetches for us two sets~$X,X'\in
\calC$ and a suitable path so that we can erase~$X$ and $X'$. Formally:

\begin{lemma}[Fetching lemma, slightly extending Lemma 5.11 of \cite{monet2020solving}]
\label{lem:fetch}
	Let~$\calG$ be a subset of~$\bbmB_S$ and~$\calC \subseteq \calG$ a configuration such that $|\calC| \neq |\eul(\calC)|$.
	Then there exist $X,X'\in \calC$ with $(-1)^{|X|} \neq (-1)^{|X'|}$ and a simple path
	$X = X_0 - \cdots - X_{n+1} = X'$ from $X$ to $X'$ in 
	$G_S$ (hence with $n$ even) with all nodes in this path being in~$\calG$ such that $X_i \notin \calC$ for $1 \leq i \leq n$.
\end{lemma}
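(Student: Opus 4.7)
The plan is to reduce the condition $|\calC| \neq |\eul(\calC)|$ to a parity statement, and then exhibit the required path as a length-minimizing connection in $\calG$ between elements of~$\calC$ of opposite parities.

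First, I would observe that $\eul(\calC) = e - o$, where $e$ (resp.~$o$) is the number of $X \in \calC$ with $|X|$ even (resp.~odd), so $|\eul(\calC)| = |e-o| \leq e+o = |\calC|$ with equality precisely when $e = 0$ or $o = 0$. Hence the hypothesis $|\calC| \neq |\eul(\calC)|$ is equivalent to saying that $\calC$ contains both an element of even cardinality and an element of odd cardinality. In particular, the set $\calP \defeq \{(X,X') \in \calC \times \calC \mid (-1)^{|X|} \neq (-1)^{|X'|}\}$ is non-empty.

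Next, I would use the assumption (stated in the paragraph preceding the lemma) that $\calG$ is a connected subset of $\bbmB_S$ in the graph $G_S$. Then every pair $(X,X') \in \calP$ can be joined by at least one simple path lying entirely in $\calG$. Among all such pairs, pick $(X,X')$ and a simple path $X = X_0 - X_1 - \cdots - X_{n+1} = X'$ inside~$\calG$ of \emph{minimum} length $n+1$. Since every edge of $G_S$ flips the parity of the cardinality, and since $X$ and $X'$ have opposite parities, the number of edges $n+1$ is odd, i.e.\ $n$ is even. The path is simple by construction.

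It remains to argue that $X_i \notin \calC$ for every $1 \leq i \leq n$; this is the one genuine step and is settled by the standard minimality trick. Assume some $X_i$ with $1 \leq i \leq n$ lies in $\calC$. Then $X_i$ has some parity: if $(-1)^{|X_i|} \neq (-1)^{|X|}$, then $(X,X_i) \in \calP$ and the prefix $X_0 - \cdots - X_i$ is a strictly shorter path in $\calG$ between elements of opposite parities in $\calC$; otherwise $(-1)^{|X_i|} \neq (-1)^{|X'|}$, and the suffix $X_i - \cdots - X_{n+1}$ plays the same role for $(X_i,X')$. Both cases contradict the minimality of the chosen path, so no such $X_i$ exists. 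The main (and only) subtlety in the argument is that this minimization must be carried out over pairs-plus-paths jointly, and it is precisely at this stage that the connectedness of $\calG$ is used; without it, one could have two parities of $\calC$ living in different connected components and no path in $\calG$ connecting them.
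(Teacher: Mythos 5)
Your proof is correct and follows essentially the same approach as the paper's: both arguments reduce the hypothesis $|\calC|\neq|\eul(\calC)|$ to the existence of two elements of~$\calC$ of opposite parity, use the connectedness of~$\calG$ (implicit in the paper's statement but made explicit in its proof) to join them by a simple path in~$\calG$, and then apply an extremal principle to clear the interior of the path of elements of~$\calC$. The only cosmetic difference is that you minimize globally over all opposite-parity pairs and all connecting paths, whereas the paper fixes one arbitrary path and extracts the extremal subpath via the indices $k_1$ and $k_2$; both are valid.
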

\begin{proof}
	Since $|\calC| \neq |\eul(\calC)|$, there exist $X'',X''' \in \calC$ with $(-1)^{|X''|} \neq (-1)^{|X'''|}$.
	Let $X'' = X''_0 - \cdots - X''_{m+1} = X'''$ be an arbitrary simple path from $X''$ to $X'''$ in 
	$G_S$ with all nodes being in~$\calG$ (such a path clearly exists because $\calG$ is a connected subset of~$G_S$).
	Now, let $k_1 \defeq \max(0 \leq j \leq m \mid (-1)^{|X''_j|} = (-1)^{|X''|} \text{ and }X''_j \in \calC)$, and then
	let $k_2 \defeq \min(k_1 < j \leq m+1 \mid (-1)^{|X''_{j}|} = (-1)^{|X'''|} \text{ and }X''_j \in \calC)$, which are well-defined.
        Then we can take $X$ to be $X''_{k_1}$ and $X'$ to be $X''_{k_2}$, which satisfy the desired property.
\end{proof}

With these in place we are ready to prove
Proposition~\ref{prp:eul-equiv}.

\begin{proof}[Proof of Proposition~\ref{prp:eul-equiv}]
  As mentioned above, we only need to prove the “only if” part of the statement.
  Starting from~$\calC_1$, we repeatedly use the fetching lemma and
  Erasing until we obtain a configuration~$\calC_1' \subseteq
  \calG$ such that $\calC_1' \simeq_{\ap_{\bbmB_S}(\calG)} \calC_1$
  and~$|\calC_1'| = \eul(\calC_1')$. We do the same with~$\calC_2$,
  obtaining~$\calC_2' \subseteq \calG$ with $\calC_2'
  \simeq_{\ap_{\bbmB_S}(\calG)} \calC_2$ and~$|\calC_2'| =
  \eul(\calC_2')$. If~$\calC_1' = \calC_2'$ we are done.
  Otherwise, let~$n \defeq |\calC_1'
  \setminus \calC_2'|$, which is~$>0$ because~$|\calC_1'| = |\calC_2'|$. We build by induction a
  sequence of configurations~$\calC_1' = \calC''_0,\ldots,\calC''_n
  = \calC_2'$ that are all included in~$\calG$ and that satisfy (1)  $\calC''_i
  \simeq_{\ap_{\bbmB_S}(\calG)} \calC''_{i+1}$ for all~$0 \leq i
  \leq n-1$ and (2) $|\calC''_i| = \eul(\calC''_i)$ and~$|\calC''_i \setminus \calC_2'| =
  n - i $ for all~$0 \leq i \leq n$. It is clear that this indeed
  implies the claim. As~$\calC''_0$ satisfies condition (2), it is enough to explain how to build $\calC''_{i+1}$
  from~$\calC''_i$ for~$i< n$, and then to argue that the~$\calC''_n$ from this construction is indeed equal to~$\calC_2'$. We have $|\calC''_i \setminus \calC_2'| =
  n - i > 1$ by induction hypothesis, so there exists~$X'\in \calC''_i \setminus \calC_2'$ and~$X'' \in \calC_2' \setminus \calC''_i$ (because~$|\calC_i''|  = \eul(\calC_i'') = \eul(\calC_1') = |\calC_1'| = |\calC_2'|$).
	Let $X' = X'_0 - \cdots - X'_{m+1} = X''$ be an arbitrary simple path from $X'$ to $X''$ in 
	$G_S$ with all nodes being in~$\calG$ (such a path exists because $\calG$ is a connected subset of~$G_S$, and~$m$ is odd and~$\geq 1$).
        Observe that all nodes~$X'_k$ with~$k$ odd are not in~$\calC''_i \cup
        \calC_2'$, because $|\calC''_i| = \eul(\calC''_i)|$ and
        $|\calC_2'| = \eul(\calC_2')$ .
	Now, let $k_1 \defeq \max(0 \leq j \leq m \mid X'_j \in \calC''_i \setminus \calC_2')$, and then
        let $k_2 \defeq \min(k_1 < j \leq m+1 \mid X'_j \in \calC_2' \setminus \calC''_i)$, which are well-defined. Define~$\calC''_{i+1} \defeq \calC''_i \setminus \{X_{k_1}\} \cup \{X_{k_2}\}$. 
        Notice that~$\calC''_{i+1}$ satisfies condition (2), so all we need
        to show is that~$\calC''_i \simeq_{\ap_{\bbmB_S}(\calG)}
        \calC''_{i+1}$.
        We know the following: for a node~$X'_k$ with~$k_1 < k <
        k_2$, if~$k$ is odd then~$X'_k \notin \calC''_i \cup
        \calC'_2$, and if~$k$ is even then either~$X'_k \in
        \calC''_i \cap \calC'_2$ or $X'_k \notin \calC''_i \cup
        \calC'_2$. If there is no node~$X'_k$ with~$k_1 < k < k_2$,
        $k$ even such that~$X'_k \in \calC_i''$, then we can simply
        use Lemma~\ref{lem:chaining} to teleport~$X_{k_1}$
        to~$X_{k_2}$ and we are done. Otherwise,
        let~$X'_{\ell_1},\ldots,X'_{\ell_m}$ with~$m\geq 1$ be all
        the nodes with~$k_1 <\ell_p < k_2$ even that are
        in~$\calC''_i$, in order, i.e.,~$\ell_1 < \ldots< \ell_m$.
        Then we can successively teleport~$X'_{\ell_{m}}$ to
        $X'_{k_2}$, $X'_{\ell_{m-1}}$ to $X'_{\ell_{m}}$, and so
        on, until we teleport~$X'_{\ell_1}$ to $X'_{\ell_2}$ and
        finally we teleport~$X'_{k_1}$ to~$X'_{\ell_1}$, thus
        obtaining~$\calC''_{i+1}$ as promised. 
        It is then clear that~$\calC''_n$ is $\calC_2'$, because we have~$|\calC''_n \setminus \calC'_2| = 0$ by condition (2), and~$|\calC''_n| = |\calC'_2|$ by construction.
        This concludes the
        proof.
\end{proof}

\subsection{Lifting lemma and simulating adjacent pairs with principal downsets}
  \label{subsec:lifting-and-simul}

  A simple, yet important component of the proof of Theorem~\ref{thm:partial} will be what we
  call the \emph{lifting lemma}. We first state and prove it before
  giving intuition.

  \begin{lemma}[Lifting lemma]
    \label{lem:lifting}
    Let~$Z \in \bbmB_S$ and $\calA \subseteq {\frakF_{\bbmB_S}(Z)}$. Let
    $I_\calA \defeq \{{\frakI_{\bbmB_S}(X)} \cap {\frakF_{\bbmB_S}(Z)} \mid X \in \calA\}$
    and $I'_\calA \defeq \{\frakI_{\bbmB_S}(X)\mid X \in \calA\}$. Then, for any
    $\calC \in \bullet(I_\calA)$, defining the
    configuration
    \[\lift_{\bbmB_S,Z}(\calC) \defeq \{X \in \bbmB_S \mid X \cup Z \in \calC\},\]
    we have that~$\lift_{\bbmB_S,Z}(\calC) \in \bullet(I'_\calA)$.
  \end{lemma}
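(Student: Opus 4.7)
The plan is to prove the statement by induction on the inductive construction of $\calC$ in $\bullet(I_\calA)$, exploiting the fact that the map $\lift_{\bbmB_S,Z}$ is essentially a preimage under the function $Y \mapsto Y \cup Z$, and preimages commute with set-theoretic operations.

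For the base cases, I need to handle $\calC = \emptyset$ and $\calC \in I_\calA$. When $\calC = \emptyset$, clearly $\lift_{\bbmB_S,Z}(\emptyset) = \emptyset \in \bullet(I'_\calA)$. When $\calC = \frakI_{\bbmB_S}(X) \cap \frakF_{\bbmB_S}(Z)$ for some $X \in \calA$, the key observation is that $X \in \calA \subseteq \frakF_{\bbmB_S}(Z)$ gives $Z \subseteq X$, so for any $Y \in \bbmB_S$ we have $Y \cup Z \subseteq X$ if and only if $Y \subseteq X$ (and the condition $Z \subseteq Y \cup Z$ is automatic). Hence $\lift_{\bbmB_S,Z}(\calC) = \frakI_{\bbmB_S}(X) \in I'_\calA$.

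For the inductive step, I claim that $\lift_{\bbmB_S,Z}$ is a Boolean homomorphism with respect to the partial operations $\cupdot$ and $\minusdot$. Specifically, if $\calC_1, \calC_2 \in \bullet(I_\calA)$ and $\calC_1 \cupdot \calC_2$ is defined, then $\lift_{\bbmB_S,Z}(\calC_1) \cap \lift_{\bbmB_S,Z}(\calC_2) = \emptyset$ (since any common element $Y$ would force $Y \cup Z \in \calC_1 \cap \calC_2 = \emptyset$), and $\lift_{\bbmB_S,Z}(\calC_1 \cupdot \calC_2) = \lift_{\bbmB_S,Z}(\calC_1) \cupdot \lift_{\bbmB_S,Z}(\calC_2)$ by definition. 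Similarly, if $\calC_1 \minusdot \calC_2$ is defined, then $\calC_2 \subseteq \calC_1$ immediately gives $\lift_{\bbmB_S,Z}(\calC_2) \subseteq \lift_{\bbmB_S,Z}(\calC_1)$, and the equality $\lift_{\bbmB_S,Z}(\calC_1 \minusdot \calC_2) = \lift_{\bbmB_S,Z}(\calC_1) \minusdot \lift_{\bbmB_S,Z}(\calC_2)$ is immediate. Applying the induction hypothesis to $\calC_1$ and $\calC_2$, we conclude that $\lift_{\bbmB_S,Z}(\calC) \in \bullet(I'_\calA)$.

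There is no real obstacle here: the entire proof is a straightforward induction. The only subtlety worth double-checking is the base case, where one must verify that $\lift_{\bbmB_S,Z}$ sends each generator of $I_\calA$ to the corresponding generator of $I'_\calA$, and this hinges precisely on the hypothesis $\calA \subseteq \frakF_{\bbmB_S}(Z)$.
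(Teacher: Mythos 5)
Your proof is correct and follows essentially the same route as the paper's: induction on the construction of $\calC$ in $\bullet(I_\calA)$, with the two base cases handled by the observation that $Z \subseteq X$ makes $\lift_{\bbmB_S,Z}$ send each generator $\frakI_{\bbmB_S}(X) \cap \frakF_{\bbmB_S}(Z)$ to $\frakI_{\bbmB_S}(X)$, and the inductive step by noting that the preimage map commutes with $\cupdot$ and $\minusdot$. Your write-up actually spells out the base-case computation in more detail than the paper does.
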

  \begin{proof}
    This is shown by induction on~$\calC \in \bullet(I_\calA)$. The
    first base case is when~$\calC = \emptyset$, but then we
    have~$\lift_{\bbmB_S,Z}(\calC) = \emptyset$ as well, which is
    in~$\bullet(I'_\calA)$ by definition. The second base case is
    when~$\calC = {\frakI_{\bbmB_S}(X)} \cap {\frakF_{\bbmB_S}(Z)}$ for
    some~$X$ in $\calA$. In this case, it is easy to show
    that~$\lift_{\bbmB_S,Z}(\calC) = {\frakI_{\bbmB_S}(X)}$; this
    uses in particular that set union is the join operation of the
    lattice~$\bbmB_S$. But then this implies
    $\lift_{\bbmB_S,Z}(\calC) \in \bullet(I'_\calA)$ indeed.
    For the inductive case, we focus on~$\cupdot$ as~$\minusdot$ works similarly. Let~$\calC_1,\calC_2 \in
    \bullet(I_\calA)$ such that~$\calC \defeq \calC_1 \cupdot
    \calC_2$ is well-defined, and let us show
    that $\lift_{\bbmB_S,Z}(\calC) \in \bullet(\calI'_\calA)$.
    By induction hypothesis we have that
    $\lift_{\bbmB_S,Z}(\calC_1) \in \bullet(\calI'_\calA)$ and
    $\lift_{\bbmB_S,Z}(\calC_2) \in \bullet(\calI'_\calA)$. Now,
    observe that~$\lift_{\bbmB_S,Z}(\calC_1) \cupdot
    \lift_{\bbmB_S,Z}(\calC_2)$ is well defined, and furthermore
    that we have $\lift_{\bbmB_S,Z}(\calC_1 \cupdot \calC_2) =
    \lift_{\bbmB_S,Z}(\calC_1) \cupdot \lift_{\bbmB_S,Z}(\calC_2)$.
    This implies $\lift_{\bbmB_S,Z}(\calC) \in
    \bullet(\calI'_\calA)$ as wanted, and concludes the proof.
  \end{proof}

The intuition behind the lifting lemma is the following. Notice
that~$\frakF_{\bbmB_S}(Z)$ is isomorphic to the Boolean
lattice~$\bbmB_{S'}$ with~$S' \defeq S \setminus Z$. Consider a
witnessing tree~$T'$ representing a configuration~$\calC$ of
$\bbmB_{S'}$ that is formed from principal downsets of~$\bbmB_{S'}$.
Consider the “lifted tree”~$T'$ that is obtained from~$T$ by
replacing every leaf of~$T$ of the form~$\frakI_{\bbmB_{S'}}(X)$
for~$X\subseteq S'$ by the principal downset of~$\bbmB_S$ that is
$\frakI_{\bbmB_{S}}(X\cup Z)$. The lifting lemma says that the
obtained tree~$T'$ is valid (i.e., the internal nodes are well-defined disjoint unions and subset complements) and that it
represents the configuration of~$\bbmB_S$ that
is~$\lift_{\bbmB_S,Z}(\calC)$.

This lemma will have two uses in the proof of Theorem~\ref{thm:partial}. The
first is that said proof will work by first constructing a configuration in 
the principal upset generated by the non-trivial zero $Z$ that is fixed
in the theorem statement, and then this lemma will be used to “lift” the result
to the whole Boolean lattice~$\bbmB_S$. The second use of this lemma is that it
helps us show that adjacent pairs can be simulated by principal downsets, while
avoiding the principal downset that is at the bottom. We explain this 
second use in the following lemma.

\begin{lemma}
  \label{lem:pairs-to-PIs}
  Let~$\calP = \{X, X \minusdot \{x\}\}$ be an adjacent pair
  of~$\bbmB_S$. Then we have~$\calP \in \bullet(\{
  \frakI_{\bbmB_S}(Y)  \mid \emptyset \subsetneq Y \subseteq X\})$.
\end{lemma}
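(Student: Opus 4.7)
The plan is to write $\calP$ as a subset complement inside $\frakI_{\bbmB_S}(X)$. The key observation is that
\[
\frakI_{\bbmB_S}(X) \setminus \calP \;=\; \{Y \subseteq X : Y \neq X \text{ and } Y \neq X \setminus \{x\}\} \;=\; \bigcup_{y \in X \setminus \{x\}} \frakI_{\bbmB_S}(X \setminus \{y\}),
\]
since a strict subset $Y$ of $X$ differs from $X \setminus \{x\}$ precisely when $Y$ omits some element of $X$ other than $x$. Each $\frakI_{\bbmB_S}(X \setminus \{y\})$ for $y \in X \setminus \{x\}$ is an allowed principal downset because $X \setminus \{y\}$ still contains $x$ and is in particular nonempty. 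Hence, provided the union above lies in $\bullet(\{\frakI_{\bbmB_S}(Z) : \emptyset \subsetneq Z \subseteq X\})$, we obtain $\calP$ as $\frakI_{\bbmB_S}(X) \minusdot$ (this union), and the outer subset complement is plainly valid.

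In the degenerate case $|X|=1$, the union is empty and the claim reduces to $\calP = \{X,\emptyset\} = \frakI_{\bbmB_S}(X)$. For the general case, I would prove by induction on $|Y|$ the following auxiliary claim: for every $X'$ and every nonempty $Y \subseteq X'$ with $X' \setminus Y \neq \emptyset$, the configuration $\mathcal{U}_{X'}(Y) \defeq \bigcup_{y \in Y} \frakI_{\bbmB_S}(X' \setminus \{y\})$ lies in $\bullet(\{\frakI_{\bbmB_S}(Z) : \emptyset \subsetneq Z \subseteq X'\})$. The base case $|Y|=1$ is immediate. For $|Y| \geq 2$, I would pick $y_0 \in Y$, set $Y' \defeq Y \setminus \{y_0\}$, and use the decomposition
\[
\mathcal{U}_{X'}(Y) \;=\; \mathcal{U}_{X'}(Y') \;\cupdot\; \bigl(\frakI_{\bbmB_S}(X' \setminus \{y_0\}) \minusdot \mathcal{U}_{X' \setminus \{y_0\}}(Y')\bigr),
\]
which rests on the elementary identity $\mathcal{U}_{X'}(Y') \cap \frakI_{\bbmB_S}(X' \setminus \{y_0\}) = \mathcal{U}_{X' \setminus \{y_0\}}(Y')$ (a consequence of $\frakI_{\bbmB_S}(A) \cap \frakI_{\bbmB_S}(B) = \frakI_{\bbmB_S}(A \cap B)$).

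The accompanying checks are routine: the inner $\minusdot$ is valid because $\mathcal{U}_{X' \setminus \{y_0\}}(Y') \subseteq \frakI_{\bbmB_S}(X' \setminus \{y_0\})$; the outer $\cupdot$ is valid because the above identity shows that $\frakI_{\bbmB_S}(X' \setminus \{y_0\}) \setminus \mathcal{U}_{X' \setminus \{y_0\}}(Y')$ is disjoint from $\mathcal{U}_{X'}(Y')$; and both $\mathcal{U}_{X'}(Y')$ and $\mathcal{U}_{X' \setminus \{y_0\}}(Y')$ are expressible by the induction hypothesis applied with $|Y'| = |Y|-1$, using that $(X' \setminus \{y_0\}) \setminus Y' = X' \setminus Y \neq \emptyset$ and that the set of allowed principal downsets for $X' \setminus \{y_0\}$ is included in the one for $X'$. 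Applying the auxiliary claim with $X' = X$ and $Y = X \setminus \{x\}$ (whose complement in $X$ is $\{x\} \neq \emptyset$ and whose size is $|X|-1 \geq 1$) then finishes the proof. The only mildly delicate step I expect to be identifying the correct recursive decomposition of the union and the concomitant disjointness verification; once this is in hand, the induction closes cleanly.
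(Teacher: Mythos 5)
Your proof is correct, but it takes a genuinely different route from the paper's. Both arguments share the same outer step: express $\calP$ as $\frakI_{\bbmB_S}(X) \minusdot \bigl(\frakI_{\bbmB_S}(X) \setminus \calP\bigr)$ and reduce to constructing the complement. The paper then obtains $\frakI_{\bbmB_S}(X) \setminus \calP$ by working inside the upset $\frakF_{\bbmB_S}(\{x\})$ (which automatically avoids the forbidden downset $\frakI_{\bbmB_S}(\emptyset)$): it applies Lemma~\ref{lem:allreach} relative to that sublattice to build the configuration $\{W \mid \{x\} \subseteq W \subsetneq X\}$, and then invokes the lifting lemma (Lemma~\ref{lem:lifting}) with $Z = \{x\}$ to turn this into $\frakI_{\bbmB_S}(X) \setminus \calP$ built from genuine principal downsets. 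You instead identify $\frakI_{\bbmB_S}(X) \setminus \calP$ explicitly as the union $\bigcup_{y \in X \setminus \{x\}} \frakI_{\bbmB_S}(X \setminus \{y\})$ of co-atomic principal downsets and realize that union in the dot-algebra by a bespoke induction on the number of removed elements; your decomposition, its disjointness check via $\mathcal{U}_{X'}(Y') \cap \frakI_{\bbmB_S}(X' \setminus \{y_0\}) = \mathcal{U}_{X' \setminus \{y_0\}}(Y')$, and the preservation of the invariant $X' \setminus Y \neq \emptyset$ (which keeps every generator nonempty) all check out. What each approach buys: yours is self-contained and elementary, needing no machinery beyond the definition of the dot-algebra, at the cost of a custom induction; the paper's is shorter on the page because it reuses Lemma~\ref{lem:allreach} and the lifting lemma, both of which are needed anyway for Theorem~\ref{thm:partial}, and it illustrates the ``restrict to an upset, then lift'' technique that drives that proof.
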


In this statement, pay attention that the principal downset
$\frakI_{\bbmB_S}(\emptyset)$ is not in the base set of the
dot-algebra.\footnote{Recall that to alleviate the notation we write
$\frakI_{\bbmB_S}(\emptyset)$ to mean
$\frakI_{\bbmB_S}(\{\emptyset\})$, which is then~$\{\emptyset\}$.}
In other words, we are allowed to use any principal downset that is
generated by an element that is below~$X$ and strictly
above~$\emptyset$ in~$\bbmB_S$. The reason is that in the proof of
Theorem~\ref{thm:partial}, $Z$ will correspond to~$\emptyset$ here
and it generates the only downset we are not allowed to use. We now
prove Lemma~\ref{lem:pairs-to-PIs}.
\begin{proof}[Proof of Lemma~\ref{lem:pairs-to-PIs}]
  Clearly, it suffices to show the claim in the case that~$X = S$.
  To this end, it is enough to show that we have~$\bbmB_S \minusdot
  \calP \in \bullet(\{ \frakI_{\bbmB_S}(Y)  \mid \emptyset
  \subsetneq Y \subseteq X\})$: indeed we can then obtain~$\calP$
  as $\frakI_{\bbmB_S}(S) \minusdot (\bbmB_S \minusdot \calP)$.
  Let~$\calC'$ be the configuration~$\{X \in \bbmB_S \mid \{x\} \subseteq X \subsetneq S\}$.
  Using Lemma~\ref{lem:allreach} appropriately we have that $\calC' \in
   \bullet(\{
     {\frakI_{\bbmB_{S}}(Y)} \cap {\frakF_{\bbmB_{S}}(\{x\})} \mid \{x\}
\subseteq Y \subsetneq S\})$. We now use the lifting lemma (invoked with~$Z = \{x\}$) to
obtain that~$\lift_{\bbmB_S, \{x\}}(\calC') \in 
\bullet(\{ \frakI_{\bbmB_S}(Y)  \mid \{x\}
  \subseteq Y \subsetneq S\})$, hence in particular
$\lift_{\bbmB_S, \{x\}}(\calC') \in 
\bullet(\{ \frakI_{\bbmB_S}(Y)  \mid \emptyset
  \subsetneq Y \subseteq S\})$.
  But notice that~$\lift_{\bbmB_S, \{x\}}(\calC')$ is actually
  $\bbmB_S \setminus \calP$, which concludes the proof.
\end{proof}

\subsection{Proof of Theorem~\ref{thm:partial}}
  \label{subsec:proof}

  We now prove Theorem~\ref{thm:partial}. 
  Fix the finite set~$S$ and the configuration~$\calI$ that is a downset of~$\bbmB_S$ such that $\ntz_{\bbmB_S}(\calI)$ is not empty, and let~$Z\in \ntz_{\bbmB_S}(\calI)$.
  By Proposition~\ref{prp:mob-eul} we have
  $\muhat_{\bbmB_S,\calI}(Z) = (-1)^{|Z|} \times \eul(\calI \cap
  {\frakF_{\bbmB_S}(Z)})$,
  therefore $\eul(\calI \cap {\frakF_{\bbmB_S}(Z)}) = 0$.
  We now instantiate Proposition~\ref{prp:eul-equiv} with~$\calG = \calC_1 =
  \calI \cap {\frakF_{\bbmB_S}(Z)}$
  and~$\calC_2 = \emptyset$ to obtain~$\calI \cap {\frakF_{\bbmB_S}(Z)}
  \simeq_{\ap_{\bbmB_S}(\calI \cap {\frakF_{\bbmB_S}(Z))}} \emptyset$.
  This gives us a left-linear tree~$T$ for $\calI \cap
  {\frakF_{\bbmB_S}(Z)}$ with disjoint unions and
subset complements and whose leaves are adjacent pairs from $\ap_{\bbmB_S}(\calI \cap {\frakF_{\bbmB_S}(Z)})$. Next we use 
  Lemma~\ref{lem:pairs-to-PIs} to simulate, in~$\frakF_{\bbmB_S}(Z)$, each
  such adjacent pair using the principal downsets of $\frakF_{\bbmB_S}(Z)$,
  except the one generated by~$Z$ (thanks to the fact that $\emptyset
  \subsetneq Y$ in the statement of the lemma). Therefore, we obtain that
  $\calI \cap {\frakF_{\bbmB_S}(Z)} \in \bullet(\{\frakI_{\bbmB_S}(X) \cap
  {\frakF_{\bbmB_S}(Z)}  \mid X \in \calI, Z \subsetneq X\})$.
  We now use Lemma~\ref{lem:lifting} to obtain that $\lift_{\bbmB_S,Z}(\calI
  \cap {\frakF_{\bbmB_S}(Z)}) \in \bullet(\{\downarrow_{\bbmB_S}(X) \mid X\in
  I\setminus \{Z\}\})$. 
  Notice that $\lift_{\bbmB_S,Z}(\calI
  \cap {\frakF_{\bbmB_S}(Z)}) \cap {\frakF_{\bbmB_S}(Z)} = \calI \cap
  {\frakF_{\bbmB_S}(Z)}$, i.e., restricted to~$\frakF_{\bbmB_S}(Z)$ 
  we have the correct configuration, and we now only need to “fix” what is outside.

  Now let~$\calC'_1 = \lift_{\bbmB_S,Z}(\calI
  \cap {\frakF_{\bbmB_S}(Z))} \setminus (\calI \cap {\frakF_{\bbmB_S}(Z)})$ and
  $\calC'_2 = \calI \setminus (\calI \cap {\frakF_{\bbmB_S}(Z)})$.
  We use Lemma~\ref{lem:allreach} to obtain that~$\calC_1'$ is in 
$\bullet(\{\downarrow_{\bbmB_S}(X) \mid X\in \calC_1'\})$, and likewise
for~$\calC_2'$. Observe that $\calC_1'$ and $\calC_2'$ only consist of elements
outside of $\frakF_{\bbmB_S}(Z))$, in particular $\calC_1', \calC_2' \subseteq I
\setminus \{Z\}$, so that
$\calC_1'$ and $\calC_2'$ are in
  $\bullet(\{\downarrow_{\bbmB_S}(X) \mid X\in
  I\setminus \{Z\}\})$. Finally we can obtain~$\calI$ with the expression~$(\lift_{\bbmB_S,Z}(\calI
  \cap {\frakF_{\bbmB_S}(Z)}) \minusdot \calC_1') \cupdot \calC_2'$, which is in
  $\bullet(\{\downarrow_{\bbmB_S}(X) \mid X\in I\setminus \{Z\}\})$ by what
  precedes. This concludes the proof.

\section{Extensions, Variants and Counterexample Search}
\label{sec:final}
In this section we present a generalization of Theorem~\ref{thm:partial} that
we believe to be true, present variants of the conjectures, and talk about our
experimental search for counterexamples. 

\paragraph*{Extension of Theorem~\ref{thm:partial}.}
We sketch here a proposed generalization that allows us to avoid not one single zero,
but subsets of zeros while requiring that the lattice has a certain
structure. We first define a few notions.

\begin{definition}
  \label{def:covering-zeros}
  Let~$\calI$ be a downset of~$\bbmB_S$.
  For~$X\in \bbmB_S$, the \emph{non-trivial covering zeros of~$X$ relative
  to~$\calI$} are
  \[\ntcz_{\bbmB_S,\calI}(X) \defeq \{Z \in \ntz_{\bbmB_S}(\calI) \mid  X \subsetneq Z \text{ and there is no } Z' \in \ntz_{\bbmB_S}(\calI) \text{ s.t. } X \subsetneq Z' \subsetneq Z\}. \]
\end{definition}

In other words, $\ntcz_{\bbmB_S,\calI}(X)$ simply consists of the minimal
non-trivial zeros of $\calI$ that are strictly above~$X$.
Observe that we have~$\ntcz_{\bbmB_S,\calI}(X) = \emptyset$ whenever~$X\notin
\calI$. We now define \emph{$k$-decomposable} downsets.

\begin{definition}
  \label{def:2decomp}
  A configuration~$\calI$ of~$\bbmB_S$ that is a downset is
  called \emph{$k$-decomposable} if for every~$X\in \bbmB_S$ we have
  $|\ntcz_{\bbmB_S,\calI}(X)| \leq k$.
\end{definition}

This intuitively means that, in every upset, there are at most two non-trivial
covering zeroes.
We believe that the following is true, but have not completely formalized the
proof.

\begin{conjecture}
  \label{conj:main}
For every finite~$S$, for every configuration~$\calI$
of~$\bbmB_{S}$ that is a $2$-decomposable downset of~$\bbmB_{S}$, we
have that~$\calI \in \bullet(\ncpd_{\bbmB_{S}}(\calI))$.
\end{conjecture}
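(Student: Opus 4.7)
The plan is to prove Conjecture~\ref{conj:main} by induction on $|\ntz_{\bbmB_S}(\calI)|$, extending the single-zero argument of Theorem~\ref{thm:partial}. The base case $|\ntz_{\bbmB_S}(\calI)| = 0$ follows from Lemma~\ref{lem:allreach} applied with $\calC = \calI$, and the case $|\ntz_{\bbmB_S}(\calI)| = 1$ is Theorem~\ref{thm:partial} itself.

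For the inductive step, I would pick a maximal element $Z \in \ntz_{\bbmB_S}(\calI)$. Maximality ensures that the upset $\calG \defeq \calI \cap \frakF_{\bbmB_S}(Z)$ contains no other non-trivial zero, so the machinery of Theorem~\ref{thm:partial} goes through verbatim on $Z$: Proposition~\ref{prp:mob-eul} gives $\eul(\calG) = 0$; Proposition~\ref{prp:eul-equiv} applied to the connected subset $\calG$ transforms it to $\emptyset$ via adjacent-pair moves \emph{inside} $\calG$; Lemma~\ref{lem:pairs-to-PIs} simulates each such move using principal downsets generated by elements strictly above $Z$ (all of which belong to $\ncpd_{\bbmB_S}(\calI)$ by maximality of $Z$); and Lemma~\ref{lem:lifting} lifts the construction, yielding $\calJ \defeq \lift_{\bbmB_S,Z}(\calG) \in \bullet(\ncpd_{\bbmB_S}(\calI))$ satisfying $\calJ \subseteq \calI$ and $\calJ \cap \frakF_{\bbmB_S}(Z) = \calG$. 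I would then produce a downset $\calI' \subseteq \calI$ with $\ntz_{\bbmB_S}(\calI') \subsetneq \ntz_{\bbmB_S}(\calI)$ (in particular $Z \notin \ntz_{\bbmB_S}(\calI')$), with $\calI'$ still $2$-decomposable, and with $\ncpd_{\bbmB_S}(\calI') \subseteq \ncpd_{\bbmB_S}(\calI)$. The induction hypothesis would then yield $\calI' \in \bullet(\ncpd_{\bbmB_S}(\calI))$, and $\calI$ could be recovered by combining $\calJ$ with a suitable carved subconfiguration of $\calI'$ via disjoint union and subset complement.

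The main obstacle I anticipate is choosing $\calI'$ consistently: removing portions of~$\calI$ above~$Z$ recomputes Möbius values globally, and \emph{a priori} an element $X \subsetneq Z$ with $\muhat_{\bbmB_S,\calI}(X) \neq 0$ could become a non-trivial zero of~$\calI'$, thereby \emph{increasing} the zero count rather than decreasing it. This is precisely where the $2$-decomposable hypothesis must be used: the restriction $|\ntcz_{\bbmB_S,\calI}(X)| \leq 2$ for every~$X$ controls how zeros can cascade when one upper region is removed, by bounding the number of Möbius contributions that recombine above each~$X$ in the recursion of Definition~\ref{def:muhat}. Fleshing out a valid choice of $\calI'$ (or equivalently, designing a strategy for the residual $\calI \minusdot \calJ$) together with the arithmetic verifying its $2$-decomposability and reduced zero count under this hypothesis is the delicate combinatorial bookkeeping which the authors report as not fully formalized, and one may ultimately need to process several zeros simultaneously rather than one at a time, using a refined Euler-characteristic erasing argument on connected subsets of $\bbmB_S$ that avoid the entire forbidden set $\ntz_{\bbmB_S}(\calI)$.
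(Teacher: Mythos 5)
First, be aware that the statement you are proving is presented in the paper as a conjecture: the authors provide only a proof sketch and explicitly say they ``have not completely formalized the proof'', so there is no complete reference argument to compare against. Still, your outline is structured differently from theirs. The paper's sketch runs a top-down induction over the elements~$X$ of~$\bbmB_S$ itself, proving for every~$X$ that $\calI \cap \frakF_{\bbmB_S}(X)$ lies in the dot-algebra generated by the sets $\frakI_{\bbmB_S}(Y) \cap \frakF_{\bbmB_S}(X)$ for the non-cancelling $Y \in \frakF_{\bbmB_S}(X)$, and concludes by taking $X = \emptyset$. That localization to upsets is where $2$-decomposability plausibly does its work: each induction step faces at most two covering zeros above~$X$, and the authors note that their argument already breaks for $3$-decomposable downsets. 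You instead induct on $|\ntz_{\bbmB_S}(\calI)|$, peeling off one maximal zero~$Z$ at a time. The part of your step that reuses the first half of the proof of Theorem~\ref{thm:partial} is sound: by maximality of~$Z$ every element of~$\calI$ strictly above~$Z$ is non-cancelling, so the Euler-characteristic argument, the adjacent-pair simulation, and the lift genuinely produce a downset $\calJ \in \bullet(\ncpd_{\bbmB_S}(\calI))$ with $\calJ \subseteq \calI$ and $\calJ \cap \frakF_{\bbmB_S}(Z) = \calI \cap \frakF_{\bbmB_S}(Z)$.

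The gap is the one you name yourself, and it is the whole difficulty: the downset $\calI'$ with strictly fewer non-trivial zeros, preserved $2$-decomposability, and $\ncpd_{\bbmB_S}(\calI') \subseteq \ncpd_{\bbmB_S}(\calI)$ is never constructed, and there is no evident candidate. By Proposition~\ref{prp:mob-eul}, $\muhat_{\bbmB_S,\calI'}(X)$ equals $(-1)^{|X|}\,\eul(\calI' \cap \frakF_{\bbmB_S}(X))$, so any modification of~$\calI$ above~$Z$ perturbs the M\"obius value of \emph{every} element below, and zeros can be created as easily as destroyed; nothing in your outline controls this. A structural warning sign is that your induction never uses $2$-decomposability in a load-bearing way: if ``remove one maximal zero and recurse'' could be carried out, it would establish the unrestricted $\ncpd$ conjecture, which the authors leave open and which their own method does not claim to reach. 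Since $|\ntcz_{\bbmB_S,\calI}(X)| \leq 2$ is a local condition on each upset $\frakF_{\bbmB_S}(X)$, an argument exploiting it should be organized around those upsets --- as the paper's sketch is --- rather than around a global count of zeros. As written, your proposal is an honest plan with the decisive step missing, not a proof.
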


\begin{proof}[Proof sketch.]
  For an element~$X \in \bbmB_S$, let~$I_X \defeq \{{\frakI_{\bbmB_S}(Y)} \cap
  {\frakF_{\bbmB_S}(X)} \mid Y \in {\frakF_{\bbmB_S}(X)}, \allowbreak \muhat_{\bbmB_S,\calI}(Y)
\neq 0 \}$. The idea of the proof would be to show by top-down induction on~$\bbmB_S$
  that for every~$X \in \bbmB_S$, we have~$\calI \cap {\frakF_{\bbmB_S}(X) \in
  \bullet(I_X)}$. Applying this claim to~$X = \emptyset$ yields the desired
result. To show this, we use similar tools to those developed for the proof
of Theorem~\ref{thm:partial}, extended with an accounting of Euler
  characteristics of diverse sets of configurations. It does not seem that the
  proposed proof would directly generalize to the general case, or indeed to
  $3$-decomposable configurations.
\end{proof}

\paragraph*{Strengthenings.} We present here two possible orthogonal ways in which we can\linebreak
strengthen the conjecture; we do not know whether the stronger conjectures are
true, nor whether they are equivalent to the original conjecture.

First, in view of Proposition~\ref{prp:full-multiplicity}, we could require that
the non-trivial intersections are only used positively or only negatively depending
on the sign of their Möbius value. Indeed, 
Proposition~\ref{prp:full-multiplicity} implies that, on tight intersection
lattices, the non-trivial intersections have total multiplicity equal to their
Möbius value, but they may be used both positively and negatively. Likewise, in
view of Proposition~\ref{prp:ncpdfull}, we could require that the principal
downsets are only used positively and negatively. 

Second, we do not know if the witnessing trees can be required to be
left-linear, i.e., whether we can obtain $\onehat$ (for the $\nci$ conjecture)
or the desired configuration (for the $\ncpd$ conjecture) by a series of
operations where, at each step, we add a (disjoint) non-trivial intersection (or principal
downset), or subtract one (which must be a subset). For instance, this
formulation does not allow us to express a disjoint union of two configurations
themselves obtained with more complex witnessing trees. This stronger conjecture
is the topic of the question asked in~\cite{cstheory} (up to replacing upsets by
downsets and to working in general lattices instead of only on Boolean lattices). This question also explains why the conjecture is false if asked
about general DAGs (instead of lattices), and shows that the conjecture can be
true for so-called \emph{crown-free lattices}.

We note that the construction for the proof of Theorem~\ref{thm:partial} (or
the one that we have in mind for Conjecture~\ref{conj:main}) does not satisfy either of
these strengthenings. However, the proof of Fact~\ref{fact:annoyingfact} can be
adjusted to satisfy the left-linear condition.

\paragraph*{Counterexample search.}
We have implemented a search for counterexamples of the $\nci$ conjecture. We
consider Sperner families~\cite{sperner}, which give all sets of subsets of a
base set of $n$ elements that are non-equivalent (i.e., that are not the same up
to permuting the elements). We have checked in a brute-force fashion that the conjecture holds up to
$n=5$, i.e., on all intersection lattices such that $\onehat$ has cardinality at
most $5$. Unfortunately, already for $n=6$ there are intersection lattices that
are too large for the computation to finish sufficiently quickly. We also
generated random intersection lattices over larger sets of elements, but could
not find a counterexample (assuming that our code is correct). The code checks
the strong version of the $\nci$ conjecture in the previous terminology, i.e.,
it searches for left-linear trees and requires the non-trivial intersections to
be used precisely with the right polarity. The code is available
as-is~\cite{code}. We have also improved the implementation to search for
solutions on each lattice using a SAT-solver rather than a brute-force search:
this speeds up the computation but also did not yield any counterexample.

We had also implemented, earlier, a search for counterexamples of an
alternative phrasing of the $\ncpd$ conjecture: see~\cite{note}. This also
illustrates that the $\ncpd$ conjecture is false if the configuration to reach
is not a downset; see Figure~4 of~\cite{note} (up to reversing directions, i.e.,
considering downsets instead of upsets).

\bibliographystyle{apalike}
\bibliography{main}

\begin{thebibliography}{}

\bibitem[Amarilli, 2019]{cstheory}
Amarilli, A. (2019).
\newblock Lighting up all elements of a poset by toggling upsets.
\newblock Theoretical Computer Science Stack Exchange.
\newblock URL:\url{https://cstheory.stackexchange.com/q/45679} (version:
  2019-10-12).

\bibitem[Amarilli, 2023]{code}
Amarilli, A. (2023).
\newblock Code.
\newblock
  \url{https://gitlab.com/Gruyere/mobius-unions-differences/-/blob/master/code_a3nm/brute_sets.cpp}.

\bibitem[Amarilli et~al., 2020]{note}
Amarilli, A., Jachiet, L., and Monet, M. (2020).
\newblock Which sets can be expressed as disjoint union and subset complement
  without möbius cancellations?
\newblock \url{https://mikael-monet.net/share/note.pdf}. Unpublished.

\bibitem[Hirsch and McLean, 2017]{hirsch2017disjoint}
Hirsch, R. and McLean, B. (2017).
\newblock \href{https://arxiv.org/abs/1612.00252}{Disjoint-union partial
  algebras}.
\newblock {\em Log. Methods Comput. Sci.}, 13(2).

\bibitem[Monet, 2020a]{monet2020solving}
Monet, M. (2020a).
\newblock
  \href{http://mikael-monet.net/publications/monet2020solving.pdf}{Solving a
  special case of the intensional vs extensional conjecture in probabilistic
  databases}.
\newblock In {\em PODS}.

\bibitem[Monet, 2020b]{sperner}
Monet, M. (2020b).
\newblock Sperner families generator.
\newblock \url{https://gitlab.com/Gruyere/sperner-families-generator}.

\bibitem[Rota, 1964]{rota1964foundations}
Rota, G.-C. (1964).
\newblock \href{https://www.maths.ed.ac.uk/~v1ranick/papers/rota1.pdf}{On the
  foundations of combinatorial theory I. Theory of M{\"o}bius functions}.
\newblock {\em Zeitschrift f{\"u}r Wahrscheinlichkeitstheorie und verwandte
  Gebiete}, 2(4).

\bibitem[Stanley, 2011]{stanley2011enumerative}
Stanley, R.~P. (2011).
\newblock {\em
  \href{https://web.archive.org/web/20210507002515/http://math.mit.edu:80/~rstan/ec/ec1.pdf}{Enumerative
  Combinatorics: Volume 1}}.
\newblock 2nd edition.

\end{thebibliography}

\newpage

\appendix

\end{document}